\documentclass[11pt]{article}

\usepackage{amsmath,amssymb,amsthm,mathtools}
\usepackage{accents}
\usepackage{tikz}
\usepackage{tikz-cd}
\usetikzlibrary{matrix,shapes,calc,fit,backgrounds}
\usepackage{fullpage}
\usepackage{booktabs}
\usepackage{array}
\usepackage{enumitem}
\usepackage{placeins}
\usepackage{xcolor}
\usepackage[colorlinks=true,linkcolor=blue,citecolor=teal]{hyperref}
\newtheorem{theorem}{Theorem}[section]
\newtheorem{lemma}[theorem]{Lemma}
\newtheorem{proposition}[theorem]{Proposition}
\newtheorem{corollary}[theorem]{Corollary}
\theoremstyle{definition}
\newtheorem{definition}[theorem]{Definition}
\newtheorem{example}[theorem]{Example}
\newtheorem{remark}[theorem]{Remark}
\newtheorem*{namedtheorem}{Theorem}
\newtheorem*{namedproposition}{Proposition}

\newcommand{\C}{\mathbb{C}}
\newcommand{\R}{\mathbb{R}}
\newcommand{\Z}{\mathbb{Z}}
\DeclareMathOperator{\Hess}{Hess}
\DeclareMathOperator{\Lag}{Lag}
\DeclareMathOperator{\Or}{O}
\DeclareMathOperator{\Sym}{Sym}
\newcommand{\Mas}{\mu}         % Maslov index
\newcommand{\bV}{V}            % reference symplectic vector space
\newcommand{\bF}{F}            % reference Lagrangian subspace of V (Hess's polarization)

\newcommand{\HH}{\mathbb{H}}
\newcommand{\g}{\mathfrak{g}}
\newcommand{\bxi}{\boldsymbol{\xi}}   % Lie algebra element of g^c
\newcommand{\beps}{\boldsymbol{\varepsilon}}  % nontrivial element of ker sigma
\newcommand{\bu}{u}                   % lifting morphism P^c -> P
\newcommand{\dR}{\mathrm{dR}}
\renewcommand{\u}{\mathfrak{u}}      % redefine breve accent; used only as \u(1)
\newcommand{\ssigma}{\sigma}         % symplectic-specific cover, typeset as sigma
\newcommand{\dd}{\mathrm{d}}
\DeclareMathOperator{\Ad}{Ad}
\DeclareMathOperator{\curv}{curv}
\DeclareMathOperator{\Hom}{Hom}
\DeclareMathOperator{\Hol}{Hol}
\DeclareMathOperator{\Mp}{Mp}
\DeclareMathOperator{\Sp}{Sp}
\DeclareMathOperator{\SO}{SO}
\DeclareMathOperator{\SU}{SU}
\newcommand{\hofib}[2]{\mathrm{ho}\,{#1}^{-1}(#2)}
\newcommand{\hv}[1]{\accentset{\rightharpoonup}{#1}}  % morphism of a groupoid; \hv u is the descended identification
\DeclareMathOperator{\GL}{GL}
\DeclareMathOperator{\U}{U}
\DeclareMathOperator{\PU}{PU}
\DeclareMathOperator{\PO}{PO}
\DeclareMathOperator{\Spin}{Spin}
\DeclareMathOperator{\Aut}{Aut}
\DeclareMathOperator{\Heis}{Heis}
\DeclareMathOperator{\ori}{or}
\DeclareMathOperator{\inc}{inc}
\DeclareMathOperator{\pr}{pr}
\DeclareMathOperator{\ind}{ind}
\DeclareMathOperator{\id}{id}
\DeclareMathOperator{\sq}{sq}        % Squaring map
\DeclareMathOperator{\ML}{ML}        % Metalinear group

\newcommand{\Uloc}{\underline{\U(1)}}          % sheaf of LOCALLY CONSTANT U(1)-functions
\newcommand{\Usm}{\mathcal{U}(1)}              % sheaf of SMOOTH U(1)-functions

\newcommand*\circled[1]{\tikz[baseline=(char.base)]{%
  \node[shape=circle,draw,inner sep=1pt] (char) {#1};}}

\makeatletter
\renewcommand\paragraph{\@startsection{paragraph}{4}{\z@}%
  {2.6ex \@plus1ex \@minus.2ex}{-0.6em}%
  {\normalfont\normalsize\itshape}}
\makeatother

\newcommand{\Edge}{\operatorname{Edge}}
\newcommand{\Face}{\operatorname{Face}}
\newcommand{\PsCocone}{\mathbf{PsCocone}}

\begin{document}
%========================= FRONT MATTER =========================
\title{Metaplectic-c prequantizations and the Einstein--Brillouin--Keller quantizability condition}

\author{Hyunmoon Kim\thanks{Ewha Womans University, Seoul, Republic of Korea}}

\date{\today}

\maketitle

\begin{abstract}
We show that every symplectic manifold admits a symplectic form whose metaplectic-c prequantizations give quantizability conditions on all its Lagrangian submanifolds. Metaplectic-c prequantizations descend to prequantizations over the Lagrangian Grassmannian bundle that are fibrewise flat with holonomies primitive fourth roots of unity on the generators of the fundamental groups. This fibrewise holonomy is consistently interpreted as that of both Leray $2$-orientations and normalized Blattner--Kostant--Sternberg half-forms. The prequantizations pull back to flat circle bundles on each Lagrangian submanifold whose holonomy character defines a quantizability condition. The first and second Stiefel--Whitney classes are identified as splitting obstructions to the Lagrangianwise holonomies. These holonomies split into a product of holonomies on any orientable Lagrangian submanifold, and split consistently for all Lagrangian submanifolds of a manifold with metaplectic structure. When the first Chern class vanishes, the holonomy splits into the Bohr--Sommerfeld factor and a quarter phase Maslov correction, recovering the classical Einstein--Brillouin--Keller condition. We develop the underlying obstruction theory for circle extensions associated to general Lie group double coverings with first hypercohomology groupoids.
\end{abstract}

\medskip
\noindent\textbf{Keywords.} Geometric quantization; metaplectic-c structures; Lagrangian submanifolds; Maslov index; Einstein--Brillouin--Keller condition; hypercohomology groupoids.

\smallskip
\noindent\textbf{Mathematics Subject Classification (2020).} Primary 53D50; Secondary 53D12, 81S10, 81Q20, 55N30, 53C27.

\tableofcontents

\section{Introduction}
\label{sec:intro}

\subsection{Prequantizations restrict to quantizability}
\label{subsec:restrict}

The prequantizability problem for symplectic manifolds and the quantizability problem for their
Lagrangian submanifolds are distinct but related problems \cite{KarasevMaslov}, one originating
with the Dirac axioms and the other developed from the Schr\"odinger equation
(cf.\ \cite{BatesWeinstein}).

When viewed as topological problems, the relevant obstructions lie in different cohomological degrees.
On a symplectic manifold $M$, prequantizability asks for the existence of a bundle, or the
triviality of a gerbe. For liftings by circle bundles with connection the obstruction lies in
$H^2(M;\Uloc)$, and with no condition on connections it lies
in $H^3(M;\Z)$. On a Lagrangian submanifold $\iota_L:L\hookrightarrow M$, quantizability asks for
the existence of a section, or the triviality of a bundle. For parallel sections of a circle
bundle with flat connection the obstruction lies in $H^1(L;\Uloc)$, and for smooth sections it lies
in $H^2(L;\Z)$. We summarize these obstruction theories in Table~\ref{tab:obstructions}.

For the Kostant--Souriau and the Bohr--Sommerfeld theories, the obstruction problems are related by honest restriction: the \emph{prequantizations} of a prequantizable symplectic
manifold restrict to a \emph{quantizability condition} on its Lagrangian submanifolds.

The obstruction to Kostant--Souriau prequantizability of a symplectic manifold
$(M,\omega)$ is the class
\begin{equation}\label{eq:kwintro}
  S\;\longmapsto\;\exp\Big(2\pi i\textstyle\int_S\tfrac{\omega}{h}\Big)
  \;\in\;\Hom_\Z\big(H_2(M;\Z),\U(1)\big)\;\cong\;H^2(M;\Uloc),
\end{equation}
whose triviality is the \textbf{Kostant--Weil integrality condition} (cf.\ Dirac charge
quantization~\mbox{\cite{Dirac,Freed}}).
When this class is
trivial, the symplectic manifold has a Kostant--Souriau prequantization: a circle bundle with
connection $(Y_{\mathrm{KS}},\gamma_{\mathrm{KS}})$ whose curvature form is the given
symplectic form $\omega$.

On a Lagrangian submanifold $\iota_L:L\hookrightarrow M$, the Kostant--Souriau prequantization
restricts to a circle bundle with flat connection on $L$, and a trivializable circle bundle with flat connection on each loop in $L$. The holonomy map
\begin{equation}\label{eq:bsintro}
  \lambda\;\longmapsto\;
  \exp\Big(2\pi i\oint_\lambda\tfrac{i\,s_\lambda^*\gamma_{\mathrm{KS}}}{h}\Big)
  \;\in\;\Hom_\Z\big(H_1(L;\Z),\U(1)\big)\;\cong\;H^1(L;\Uloc),
\end{equation}
is independent of the choice of trivializing section $s_\lambda$. The triviality of this holonomy map is the \textbf{Bohr--Sommerfeld integrality condition}  and gives the obstruction to the quantizability of the Lagrangian submanifold. When this obstruction is trivial, the Lagrangian submanifold supports a parallel section of
$(Y_{\mathrm{KS}}|_L,\gamma_{\mathrm{KS}}|_L)$. On a cotangent bundle $M=T^\ast X$ with
exact symplectic form $\omega=\dd(p\,\dd q)$, and Kostant--Souriau prequantization
$Y_{\mathrm{KS}}=T^\ast X\times\U(1)$ with $\gamma_{\mathrm{KS}}=i\,p\,\dd q+\hbar\,z^{-1}\dd z$, the
classical Bohr--Sommerfeld condition $\tfrac1h\oint_\lambda p\,\dd q\in\Z$ is the triviality of
\eqref{eq:bsintro}, because
$\tfrac{1}{i}\oint_\lambda s_\lambda^* \gamma_{\mathrm{KS}}=\oint_\lambda p\,\dd q$ for the
constant section $s_\lambda=(\lambda,1)$.

In this paper we show that the \emph{metaplectic-c prequantizations} of a symplectic manifold
$(M,\omega)$ give the \emph{Einstein--Brillouin--Keller (EBK) integrality condition} on
its Lagrangian submanifolds (Theorem~\ref{thm:ebk}).

A \textbf{metaplectic-c prequantization} \cite[Def.~1.4, Def.~1.12]{Hess},
\cite[\S6]{RobinsonRawnsley} is a tower of principal bundles
\begin{equation}\label{eq:mpctowerintro}
\begin{tikzcd}[row sep=2.1em, column sep=small]
(P^c,\gamma) \ar[d, "\bu"'] & \text{principal } \Mp^c(\bV)\text{-bundle over } M \\
P \ar[d, "\rho"'] & \text{symplectic frame bundle, principal } \Sp(\bV)\text{-bundle over } M \\
M & (M,\omega) \text{ symplectic manifold}
\end{tikzcd}
\end{equation}
where $\bV$ is a fixed model symplectic vector space, and $\gamma$ is a $\U(1)$-connection
on the circle bundle $\bu:P^c\to P$ which is $\Mp^c(\bV)$-invariant and satisfies
$\dd\gamma=i\,\rho^{c\,\ast}\omega$ for the projection $\rho^c=\rho\circ\bu$ to $M$ (Definition \ref{def:centralconnection}).

On a symplectic manifold, the obstruction to metaplectic-c prequantizability is the
Kostant--Weil class \eqref{eq:kwintro} translated by the second Stiefel--Whitney class of $TM$
\cite[Thm.~1.14]{Hess}, \cite[(7.3)]{RobinsonRawnsley} (Corollary~\ref{cor:Pc}),
\begin{equation}\label{eq:mpcintro}
  S\;\longmapsto\;
  \exp\Big(2\pi i\textstyle\int_S\tfrac{\omega}{h}\Big)\cdot(-1)^{\langle w_2(TM),S\rangle}
  \;\in\;\Hom_\Z\big(H_2(M;\Z),\U(1)\big)\;\cong\;H^2(M;\Uloc).
\end{equation}
In real coefficients, the triviality of this class is equivalent to the integrality of
\begin{equation}\label{eq:hessformTM}
  \Big[\tfrac{\omega}{h}\Big]+\tfrac{1}{2}c_1(TM)_{\R}\ \in\ H^2(M;\Z)_\R ,
\end{equation}
as stated in \cite[$q^3$]{Czyz}, \cite[Prop.~1.15]{Hess},
\cite[(7.3)]{RobinsonRawnsley}. The same condition was obtained in
\cite[Thm.~1, (32), Rem.~1]{KarasevMaslov} and \cite[pp.~75--76]{DazordPatissier} as an
asymptotic quantizability condition.
Metaplectic-c prequantizability is strictly weaker than the joint condition of
Kostant--Souriau prequantizability and the existence of a metaplectic structure. The latter is obstructed by
$w_2(TM)$ \cite[Prop.~3.3.2]{KostantSpinors}, \cite[Rem.~2.1]{Czyz}, so cotangent bundles of oriented manifolds are
metaplectic, since $w_2\big(T(T^\ast X)\big)=w_1(TX)^2$ for any manifold $X$, as observed by
D.~Burns \cite[Prop.~3.5]{KostantSpinors}. On the other hand, K\"ahler
manifolds with odd first Chern class --- such as the complex Grassmannians
$\mathrm{Gr}(k,2n+1)$ with $0<k<2n+1$, or Enriques surfaces (Example~\ref{ex:enriques})
--- admit K\"ahler forms for which they are metaplectic-c prequantizable, but admit no
metaplectic structure.

Moreover, while it is well known that every symplectic manifold admits a metaplectic-c structure \cite{ForgerHess,RobinsonRawnsley}, metaplectic-c prequantizability requires an additional curvature condition. We provide a modest argument that if a manifold
admits a symplectic form, it admits a symplectic form for which it is metaplectic-c
prequantizable
(Proposition~\ref{prop:universal}).

The \textbf{Einstein--Brillouin--Keller integrality condition} on Lagrangian submanifolds
(Bohr--Sommerfeld condition in \cite[II\S7, (7.6)]{GuilleminSternberg}, generalized
Bohr--Sommerfeld condition in \cite[Def.~3.4]{Gosson94}, Bohr--Sommerfeld condition with
constant shift in \cite{Sniatycki75}) is the triviality of the class
\begin{equation}\label{eq:ebkclassintro}
  \lambda\;\longmapsto\;
  \exp\Big(2\pi i\Big[\oint_\lambda\tfrac{i\,s^*_\lambda \gamma_{\mathrm{KS}}}{h}
    +\tfrac{\Mas(\lambda)}{4}\Big]\Big)
  \;\in\;\Hom_\Z\big(H_1(L;\Z),\U(1)\big)\;\cong\;H^1(L;\Uloc).
\end{equation}
The condition is well-defined when $\mu(\lambda)$ is well-defined modulo $4$. In this paper, the well-definedness of the classical EBK condition on a symplectic manifold will mean we assume $\mu(\lambda)$ is a well-defined integer for all loops in all Lagrangian submanifolds, and it makes sense to ask whether the class \eqref{eq:ebkclassintro} is trivial for any given Lagrangian submanifold.

The \textbf{Maslov correction} $\Mas(\lambda)/4 \pmod\Z$ accounts for the phase a wave function
acquires at caustics and at turning points (cf.\ \cite{BrackBhaduri,Stone}). It has a history of
contributions, from the invariant formulation of the Bohr--Sommerfeld conditions
\cite{Einstein}, its relevance to the Schr\"odinger equation \cite{Brillouin}, extending the
invariant derivation to involve quarter-integer phase shifts \cite{Keller}, to interpreting the
numerator $\Mas(\lambda)$ as a precise geometric count --- a Morse index
\cite[Lem.~8.7--8.8]{Maslov} --- and exhibiting it as a topological invariant
\cite[\S1.4]{Arnold}, and from general Fourier integral operators in
\cite[\S3.3]{Hormander}. The Maslov correction with quarter factor $\Mas(\lambda)/4$ was interpreted as the holonomy \cite{Sniatycki75} of half-forms introduced in \cite{BKS}, as the holonomy of tautological half-forms on the
Lagrangian Grassmannian \cite[\S4]{Weinstein}, and as the holonomy of the $2$-oriented Lagrangian Grassmannian over the Lagrangian Grassmannian in \cite[Def.~3.4]{Gosson94}. We refer to \cite[IV\S2, (2.26)]{GuilleminSternberg} for the discussion on its relationship with the invariant function on Lagrangian pairs of \cite[I\S2]{Leray}, the Wall--Kashiwara triple
index \cite{Wall}, \cite[\S1.5]{LionVergne}, and the cross index of four Lagrangian subspaces of \cite[\S3.3]{Hormander}. We interpret the factor $\mu(\lambda)/4$ from the geometry of the Lie groups (\S\ref{subsec:lagquant}) directly. The unobstructedness of metaplectic-c prequantizability (Proposition \ref{prop:universal}) suggests that \emph{one may view the term $\mu(\lambda)/4$ as a decoupled factor of an intrinsically global object under global-to-local restriction rather than as a ``correction'' term}.

In our derivation, a descent-and-pullback operation replaces the restriction mechanism. Metaplectic-c prequantizations descend
(Proposition~\ref{prop:gammadescends}) to prequantizations over the Lagrangian Grassmannian
bundle
\begin{equation}\label{eq:curvedmaslovintro}
\begin{tikzcd}[row sep=2.1em, column sep=small]
(Y,\tilde\gamma) \ar[d] & \text{principal } \U(1)\text{-bundle over }\Lag(TM),\
  \dd\tilde\gamma=i\,\pi^\ast\omega \\
\Lag(TM) \ar[d, "\pi"'] & \text{Lagrangian Grassmannian bundle} \\
M &
\end{tikzcd}
\end{equation}
and the holonomy of the pullback along a Gauss section $s_L:L\to\Lag(TM)|_L$ gives the EBK
condition. This generalizes a construction of \cite[IV\S2, p.~130]{GuilleminSternberg} on cotangent bundles that measures the integer Maslov index.

A sketch of the separate holonomy factors is as follows. When $M$ admits a metaplectic structure, it admits a fourfold Maslov covering compatible with
the oriented Lagrangian Grassmannian bundle \cite[\S2b]{Seidel}
\begin{equation}\label{eq:mu4coveringintro}
\begin{tikzcd}[row sep=2.1em, column sep=small]
\Lag(TM;\mu_4) \ar[d] & \text{principal } \mu_4\text{-bundle over }\Lag(TM) \\
\Lag(TM;\mu_2) \ar[d] & \text{oriented Lagrangian Grassmannian bundle} \\
\Lag(TM) \ar[d, "\pi"'] & \\
M. &
\end{tikzcd}
\end{equation} If $(M, \omega)$ additionally admits a Kostant--Souriau prequantization $Y_{\mathrm{KS}}$, the
amalgamated product $\Lag(TM;\mu_4)\times_{\mu_4}\pi^\ast Y_{\mathrm{KS}} \to \Lag(TM)$ is a
prequantization over the Lagrangian Grassmannian bundle, and the holonomy of the pullback along a Gauss section separates into a product of holonomies for each factor.

\subsection{The metaplectic-c prequantizability problem on symplectic manifolds}
\label{subsec:prequant}

In Section~\ref{sec:complexes}, we develop an obstruction theory generalizing Hess's solution to the metaplectic-c prequantizability problem \cite[Prop.~1.15]{Hess} to bring Hess's \v{C}ech cochain computations up to date with differential cohomology literature \cite{Brylinski,Gomi,Freed,FNSW,Waldorf,BunkSurvey}. We use groupoid-valued hypercohomology, because the groupoid formulation clarifies the distinction between the action on the homotopy fibre passed to isomorphism classes and the action on the fibre of the map on isomorphism classes (\S \ref{subsec:liftings}) as noted by \cite{HessKrausser}, and hypercohomology allows us to track the connection descent data (\S \ref{subsec:centralconn}) while retaining the homological algebra structure of the classical lifting problem. It applies to Lie group double coverings and associated circle extensions in general, and in particular, presents a unified treatment of $\Spin^c$ and $\Mp^c$ circle extensions.

Let $\sigma:\tilde G\to G$ be a Lie group double covering. Then take the circle central
extension of $G$ associated to $\sigma$
\begin{equation}\label{eq:mpc}
  1 \to \U(1) \to G^c \xrightarrow{\ \sigma^c\ } G \to 1 ,
\end{equation}
where
\[ G^c \;=\; (\tilde G\times\U(1))/\Delta, \]
with $\Delta\cong\Z_2$ the antidiagonal.

Given a principal $G$-bundle $P\to M$, a \emph{$\sigma^c$-lifting} is a principal $G^c$-bundle
$P^c$ together with an equivariant map to $P$ identifying $P^c/\U(1)$ with $P$
(Definition~\ref{def:lifting}). The existence problem for a $\sigma^c$-lifting is solved
\cite{Frenkel} (cf.\ \cite{HessKrausser,Brylinski}) by taking
the long exact sequence of pointed sets associated to the short exact sequence of sheaves of
group-valued functions
\[ 1 \to \Usm \to \mathcal G^c \to \mathcal G \to 1. \]
Since $\U(1)$ is central in $G^c$, the sequence of pointed sets exists in low degrees,
\[ H^1(M; \Usm)\;\to\;H^1(M; \mathcal{G}^c)
   \;\xrightarrow{\ [\cdot/\U(1)]\ }\;H^1(M; \mathcal{G})
   \;\xrightarrow{\ w(\cdot, \sigma^c)\ }\;H^2(M; \Usm). \]
The connecting map returns the Dixmier--Douady class
$w(P,\sigma^c)\in H^2(M;\Usm)\cong H^3(M;\Z)$, and defines the obstruction to the existence of a
$\sigma^c$-lifting of $P$.

In \S\ref{subsec:prescribedcurv} we construct a short exact sequence of complexes of sheaves
\[ 1 \to \Delta^\bullet \to (\tilde{\mathcal G}\times\Usm)^\bullet
   \to (\mathcal G^c)^\bullet \to 1 \]
and take the relevant exact sequence in hypercohomology groupoids. The complexes may be nonabelian in degree zero, so the notion of hypercohomology and exactness is spelled out by hand. On isomorphism classes this gives an exact sequence of pointed sets
\[ \pi_0\mathbf{H}^1(\Delta^\bullet)\;\to\;\pi_0\mathbf{H}^1\big((\tilde{\mathcal G}\times\Usm)^\bullet\big)\;\to\;
   \pi_0\mathbf{H}^1\big((\mathcal G^c)^\bullet\big)\;\xrightarrow{\ \partial\ }\;\HH^2(\Delta^\bullet), \]
where the small groupoid $\mathbf{H}^1((\mathcal G^c)^\bullet)$ is equivalent to the groupoid of principal $G^c$-bundles with central
connection (\S\ref{subsec:centralconn}), and the genuine hypercohomology group
$\HH^2(\Delta^\bullet)\cong H^2(M;\Z_2)\oplus Z^2_{\dR}(M)$
 (Lemma~\ref{lem:gpdLES}). The connecting map $\partial$ returns both the
Stiefel--Whitney obstruction class $w(P,\sigma)\in H^2(M;\Z_2)$ for the $\sigma$-lifting of
$P$, and the curvature $2$-form of the central connection $\gamma$.

This structure is then massaged into the obstruction theories (Theorem~\ref{thm:Pc}, Proposition~\ref{prop:splittings}). Combining the components of $\partial$
(Lemma~\ref{lem:u1agreement}) gives a class
\begin{equation}\label{eq:mainclass}
  w(P,\sigma^c,\omega)\in H^2(M;\Uloc),
  \qquad
  S\longmapsto e^{\frac{i}{\hbar}\int_S\omega}\cdot(-1)^{\langle w(P,\sigma),S\rangle}
  \quad (S\in H_2(M;\Z)),
\end{equation}
where we identify $H^2(M;\Uloc)\cong\Hom(H_2(M;\Z),\U(1))$ by the universal coefficient theorem.

The class $w(P,\sigma^c,\omega)$ is the obstruction to the existence of $\sigma^c$-liftings
\emph{with prescribed central curvature} (Definition~\ref{def:centralconnection}).

\begin{namedtheorem}[\ref{cor:Pc}]
Let $M$ be a paracompact smooth manifold, $\omega$ a closed $2$-form on $M$, and
$\sigma^c:G^c\to G$ a circle extension associated to a Lie group double covering
$\sigma:\tilde G\to G$. Then a principal $G$-bundle $P\to M$ admits a $\sigma^c$-lifting with a
central connection of curvature $\omega$ if and only if $w(P,\sigma^c,\omega)$ is trivial. The
nonempty isomorphism classes of such liftings form a torsor under $H^1(M;\U(1))$.
\end{namedtheorem}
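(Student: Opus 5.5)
The plan is to read the statement off the exact sequence of pointed sets obtained from the short exact sequence of complexes $1\to\Delta^\bullet\to(\tilde{\mathcal G}\times\Usm)^\bullet\to(\mathcal G^c)^\bullet\to1$ (Lemma~\ref{lem:gpdLES}), but run \emph{over a fixed $P$}.

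\emph{Step 1 (translating to hypercohomology).} Use the equivalence of $\mathbf H^1((\mathcal G^c)^\bullet)$ with the groupoid of principal $G^c$-bundles with central connection (\S\ref{subsec:centralconn}) to recast the statement. A $\sigma^c$-lifting of $P$ with central connection of curvature $\omega$ is then an object of the homotopy fibre, over $P$ and over the curvature value $\omega$, of the map remembering the underlying $G$-bundle and the curvature. The obstruction is $\partial$ evaluated on a cocycle representing $P$ together with a local choice of $\U(1)$-connection data. By Lemma~\ref{lem:gpdLES}, its component in $\HH^2(\Delta^\bullet)\cong H^2(M;\Z_2)\oplus Z^2_{\dR}(M)$ is the pair $(w(P,\sigma),\text{curvature})$.

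\emph{Step 2 (combining the components).} Prescribing the curvature to be $\omega$ forces the de Rham component, while the $\Z_2$ component is $w(P,\sigma)$. These two are glued, using Lemma~\ref{lem:u1agreement}, into the single class $w(P,\sigma^c,\omega)\in H^2(M;\Uloc)$ of \eqref{eq:mainclass}. The underlying reason is that the freedom to change the $\U(1)$-part of a lift is the cone of $\Usm\xrightarrow{d\log}\Omega^1$. Modding the pair out by this freedom leaves exactly the class $e^{\frac{i}{\hbar}\int_S\omega}(-1)^{\langle w(P,\sigma),S\rangle}$. Exactness of the sequence at $\pi_0\mathbf H^1((\mathcal G^c)^\bullet)$ then says that a lift exists if and only if this class vanishes.

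This identification is the main obstacle. One has to check that the vanishing of $\partial$ modulo the choices of local connection $1$-forms and of sign lifts is \emph{equivalent} to the triviality of the $\Uloc$-class, not merely implied by it. I would do this on a good cover using the \v{C}ech--de Rham double complex. Write $\omega=d A_i$ locally, $A_i-A_j=d\log g_{ij}$, and let $\epsilon_{ijk}\in\Z_2$ be the sign cocycle. The lifting equations ask for $U(1)$-valued functions $z_{ij}$ with $\delta z=\epsilon\cdot (g_{ij}g_{jk}g_{ki})$ up to constants. This is solvable exactly when the cocycle $\epsilon_{ijk}\,e^{i\,c_{ijk}/\hbar}$ is a coboundary in locally constant $\U(1)$.

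\emph{Step 3 (torsor statement).} For a fixed $P$, two lifts with curvature-$\omega$ central connections differ by tensoring with a flat $\U(1)$-bundle. To see this, use that $\U(1)$ is central, so the difference of two lifts is a $\U(1)$-bundle, and equality of curvatures makes the difference connection flat. Conversely, any flat bundle acts on a given lift. Flat $\U(1)$-bundles up to isomorphism are $H^1(M;\Uloc)=H^1(M;\U(1))$.

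It remains to show this action is free and transitive on isomorphism classes of liftings over $P$, i.e.\ over the homotopy fibre rather than the fibre of $\pi_0$. Transitivity follows from the difference construction. For freeness, I would use the groupoid structure: an isomorphism of lifts covering $\id_P$ is a $\U(1)$-valued function preserving the connections, hence locally constant. So the stabilizer computation reduces to $H^0$-level data and yields no extra identifications beyond $H^1(M;\Uloc)$. This is the distinction emphasized in \S\ref{subsec:liftings}, and paracompactness is used for the partition-of-unity arguments throughout.
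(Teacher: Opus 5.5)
Your proposal is correct and follows the paper's proof. Necessity comes from Lemma~\ref{lem:u1agreement}. Sufficiency uses the same good-cover construction as Theorem~\ref{thm:Pc}(1): lift the transition functions to $\tilde G$, zigzag $\omega$ to $(\gamma,\xi)$, and correct $\xi$ by a locally constant cochain whose coboundary is the $\Uloc$-cocycle $\check\delta\tilde g\cdot\check\delta\xi$. The torsor statement comes from the flat difference bundle $(\gamma_2-\gamma_1,\,g^c_2(g^c_1)^{-1})$, as in Theorem~\ref{thm:Pc}(3).

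Two small corrections. First, existence is carried by that explicit \v{C}ech construction, not by exactness of the sequence of Lemma~\ref{lem:gpdLES} at $\pi_0\mathbf H^1((\mathcal G^c)^\bullet)$; its connecting map is defined only on already existing $G^c$-bundles and detects splittings. Second, freeness comes from the fact that an isomorphism $P^c\otimes\rho^\ast Y\cong P^c$ covering $\id_P$ is a $\U(1)$-valued gauge transformation that trivializes $(Y,\gamma_Y)$ as a flat bundle; local constancy only computes the automorphism group $H^0(M;\U(1))$.
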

For the metaplectic-c circle extension, these results were obtained in \cite[Thm.~1.14, Thm.~1.17]{Hess}.

The obstructions given by the two connecting maps are related by the Bockstein homomorphism.

\begin{namedtheorem}[\ref{thm:bockstein}]
Let $G$ be a Lie group, and $\sigma^c:G^c\to G$ the circle extension \eqref{eq:mpc} associated
to a Lie group double covering $\sigma:\tilde{G}\to G$. Let $P\to M$ be a principal $G$-bundle
and let $\omega$ be a closed $2$-form on $M$. Then
\[ \beta\big(w(P,\sigma^c,\omega)\big)=w(P,\sigma^c)\;\in\;H^3(M;\Z), \]
where $\beta$ is the Bockstein homomorphism for the exact sequence
\[ 0 \to \Z \to \R \to \U(1) \to 1 . \]
\end{namedtheorem}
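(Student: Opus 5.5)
Both sides will be compared at the \v{C}ech cochain level on a good cover $\{U_\alpha\}$ of $M$. The class $w(P,\sigma^c,\omega)\in H^2(M;\Uloc)$ comes from combining the two components of the connecting map $\partial$ (Lemma~\ref{lem:u1agreement}): the $\Z_2$-cocycle representing $w(P,\sigma)$, and the curvature $\omega$ with its \v{C}ech--de Rham descent. The class $w(P,\sigma^c)\in H^2(M;\Usm)\cong H^3(M;\Z)$ is the Dixmier--Douady obstruction of the lifting problem $1\to\Usm\to\mathcal G^c\to\mathcal G\to1$. I will write explicit representatives of both from a single set of local data and compare them.

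First I fix transition functions $g_{\alpha\beta}:U_{\alpha\beta}\to G$ of $P$ and choose lifts $\tilde g_{\alpha\beta}$ to $\tilde G$, which exist because the overlaps are contractible. Then $\epsilon_{\alpha\beta\gamma}=\tilde g_{\alpha\beta}\tilde g_{\beta\gamma}\tilde g_{\gamma\alpha}\in\Delta\cong\Z_2=\{\pm1\}$ is a cocycle representing $w(P,\sigma)$. The images $[\tilde g_{\alpha\beta},1]\in G^c$ lift $g_{\alpha\beta}$ along $\sigma^c$. Their \v{C}ech coboundary is the $\U(1)$-valued cocycle $[\epsilon_{\alpha\beta\gamma},1]=[1,\epsilon_{\alpha\beta\gamma}]$, viewed as the sign $\epsilon_{\alpha\beta\gamma}\in\{\pm1\}\subset\U(1)$. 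So $w(P,\sigma^c)$ is the image of $w(P,\sigma)$ under $\Z_2\hookrightarrow\Usm$, and its integral representative is obtained by writing $\epsilon=e^{2\pi i\cdot\frac{1}{2}\delta_{\alpha\beta\gamma}}$ and taking the integer coboundary. In other words, $w(P,\sigma^c)=\beta_{\Z_2}(w(P,\sigma))$ up to the standard identification. This identification is the main point I will need to check against the paper's conventions for the connecting map.

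Second, I write the $\Uloc$-cocycle for $w(P,\sigma^c,\omega)$. I choose local primitives $\omega=\dd a_\alpha$ and functions $a_\beta-a_\alpha=\dd f_{\alpha\beta}$, and set
\[
c_{\alpha\beta\gamma}=e^{\frac{i}{\hbar}(f_{\alpha\beta}+f_{\beta\gamma}+f_{\gamma\alpha})}\cdot\epsilon_{\alpha\beta\gamma}.
\]
The first factor is locally constant and represents the image of $[\omega/h]$ under $H^2(M;\R)\to H^2(M;\Uloc)$. The Bockstein of $H^2(M;\Uloc)\to H^3(M;\Z)$ is computed by lifting to real cochains $\tfrac{1}{2\pi}\log$ and taking the coboundary. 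The $\omega$ factor lifts to a real cocycle, so it contributes nothing. The $\epsilon$ factor lifts to $\tfrac12\delta$, so it contributes exactly the integral class computed in the first step. Hence $\beta(w(P,\sigma^c,\omega))=\beta(\iota_*w(P,\sigma))=w(P,\sigma^c)$.

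The main obstacle is consistency of conventions. I must confirm that the combination in Lemma~\ref{lem:u1agreement} produces exactly the cocycle $c$ above, in particular the normalization $e^{\frac{i}{\hbar}\int\omega}$ versus $e^{2\pi i\int\omega/h}$. I must also confirm that the isomorphism $H^2(M;\Usm)\cong H^3(M;\Z)$ used for $w(P,\sigma^c)$ is the exponential-sequence connecting map with the same sign as $\beta$. If a global sign discrepancy appears, it is harmless, because the relevant classes come from $2$-torsion.
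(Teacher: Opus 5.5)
Your proposal is correct and is essentially the paper's proof. Both lift the transition functions to $\tilde G$ and observe that $\check\delta[\tilde g,1]=[1,(-1)^{\varepsilon}]$ lifts to the real cochain $\tfrac12\tilde\varepsilon$, so $w(P,\sigma^c)$ and $\beta_2\big(w(P,\sigma)\big)$ share the integral representative $\tfrac12\check\delta\tilde\varepsilon$; both then kill the curvature factor because it lifts to a real cocycle (the paper only packages $\beta\big((-1)^{w(P,\sigma)}\big)=\beta_2\big(w(P,\sigma)\big)$ via naturality along \eqref{eq:coeffmorphism} rather than lifting your combined cocycle directly), and your sign worry is well founded but harmless: your first factor is the cochain $\check\delta\xi$ from the proof of Theorem~\ref{thm:Pc}(1), which represents $e^{-i\int\omega}$ in the paper's conventions, yet $\beta$ annihilates it in either sign and the $\epsilon$-factor is its own inverse.
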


\subsection{The Maslov correction on the Lagrangian Grassmannian}
\label{subsec:lagquant}

In Section~\ref{sec:ebk}, we use character identities to consolidate different accounts of half-forms in the literature.
Recall that the Maslov correction originates in semiclassical analysis \cite[Lem.~6.11]{Maslov}, \cite[Prop.~1.2.3]{Duistermaat},
\cite[Ch.~I]{GuilleminSternberg} from the principle of stationary phase in the limit $\hbar\to0$. If
\[ f(x,y)=x_1^2+\dots+x_k^2-y_1^2-\dots-y_{n-k}^2 \]
is the Morse normal form of a function at a nondegenerate critical point, then
\begin{equation}\label{eq:stationaryphase}
  \int_{\R^n} e^{if/\hbar}\,\dd x\,\dd y
  \;=\;(\pi\hbar)^{n/2}\,
  e^{\frac{i\pi}{4}\operatorname{sig}\Hess f}
\end{equation}
as an oscillatory integral. For a general amplitude this factor appears as the leading term of
the stationary phase expansion, and each critical point contributes an eighth root of unity
determined by the signature of the Hessian $\operatorname{sig}\Hess f$. The choices of generating functions presenting the
same Lagrangian submanifold introduce differences of Hessian
signatures \cite[Prop.~1.2.3]{Duistermaat}, and their total contribution gives the Maslov correction
$\Mas(\lambda)/4$.

The oriented Lagrangian
Grassmannian $\Lag(\bV;\mu_2)$ has the particular property that it is a connected, non-simply-connected double covering of
$\Lag(\bV)$, with fundamental group isomorphic to $\Z$. This property enabled Leray to interpret the Maslov correction through generalizations of orientations on Lagrangian
subspaces in \cite[I\S2.10]{Leray}. 

Consider the connected double covering $\Lag(\bV;\mu_4) \to \Lag(\bV; \mu_2)$. As an element of $\Lag(\bV;\mu_2)$ is a Lagrangian subspace with a $1$-orientation, an element of
$\Lag(\bV;\mu_4)$ may be viewed as a Lagrangian subspace with a $2$-orientation. On this tower, a
loop $\lambda$ in $\Lag(\bV)$ lifts to a loop in $\Lag(\bV;\mu_4)$ if and only if its Maslov
index is divisible by four, equivalently if $\Mas(\lambda)/4$ is integral.

We may further justify Leray's interpretation by considering the stabilizer $\Sp_{\bF}(\bV)$ of
a fixed Lagrangian subspace $\bF\subseteq\bV$ in $\Sp(\bV)$, and its identity component
$\Sp^+_{\bF}(\bV)$. The inclusion $\Sp^+_{\bF}(\bV)\hookrightarrow\Sp(\bV)$ lifts to the Lie
group double covering $\ssigma:\Mp(\bV)\to\Sp(\bV)$ (Lemma~\ref{lem:lifts}), and we may express
the covering spaces as homogeneous spaces
\[ \begin{tikzcd}[row sep=2.2em, column sep=2.6em]
\Lag(\bV;\mu_4) \ar[r, "2{:}1"] \ar[d, "\cong"'] &
  \Lag(\bV;\mu_2) \ar[r, "2{:}1"] \ar[d, "\cong"'] &
  \Lag(\bV) \ar[d, "\cong"'] \\
\Mp(\bV)/\Sp^+_{\bF}(\bV) \ar[r] &
  \Sp(\bV)/\Sp^+_{\bF}(\bV) \ar[r] &
  \Sp(\bV)/\Sp_{\bF}(\bV) ,
\end{tikzcd} \]
and $\Sp^+_{\bF}(\bV)$ is genuinely the stabilizer of a Lagrangian subspace with an orientation.

On the other hand, Blattner--Kostant--Sternberg \cite{BKS} introduced half-forms as square roots
of volume forms on complex polarizations. The metalinear double covering
$\ML(\bF)\to\GL(\bF)$ has a character
$\chi_{\mathrm{BKS}}:\ML(\bF)\to\C^\times$ (\S\ref{subsec:characters}) which squares to the determinant of the
covering projection. Half-forms are defined as sections of the associated bundle for a metalinear frame bundle
\cite{BKS,Blattner,BlattnerNonReal}, and were introduced to compare Hilbert spaces associated to different polarizations (as well as to provide canonical integration measures).

The natural correspondence between orientations and normalized volume
forms extends to a correspondence between Leray's $2$-orientations and normalized BKS half-forms up to a coherent choice of primitive fourth root of unity. This can be stated as follows.

\begin{namedtheorem}[\ref{thm:threeways}]
For a coherent choice of primitive fourth root of unity, the associated bundles over the
Lagrangian Grassmannian $\Lag(\bV)$
\[ \Lag(\bV;\mu_4)\times_{\mu_4}\U(1)
   \;\cong\;\widetilde{\mathfrak B}(\tau_{\bV})\times_{\ML(\bF)}\U(1)
   \;\cong\;\Mp^c(\bV)\times_{\Mp^c_{\bF}(\bV)}\U(1) , \]
are isomorphic as flat $\U(1)$-bundles with holonomy a primitive fourth root of unity over the
Maslov loop (a generator of $\pi_1(\Lag(\bV))\cong\Z$).
\end{namedtheorem}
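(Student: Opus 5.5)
The strategy is to reduce all three bundles to $\U(1)$-bundles associated to principal bundles over $\Lag(\bV)\cong\Sp(\bV)/\Sp_{\bF}(\bV)$ via characters of finite (or discrete-image) structure groups. Since a flat $\U(1)$-bundle over the connected space $\Lag(\bV)$ is determined up to isomorphism by its holonomy character $\pi_1(\Lag(\bV))\cong\Z\to\U(1)$, it then suffices to compute the holonomy around the Maslov loop for each bundle and check that the three values agree.

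\emph{First bundle.} $\Lag(\bV;\mu_4)\to\Lag(\bV)$ is the connected fourfold covering, hence the principal $\mu_4$-bundle with monodromy surjecting $\pi_1\cong\Z$ onto $\mu_4$. The generator (the Maslov loop, Maslov index $1$) acts by a generator of $\mu_4$. Fixing the inclusion $\mu_4\hookrightarrow\U(1)$ and the identification of that generator with $i$ (this is the "coherent choice"), the associated flat bundle has holonomy $i$.

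\emph{Third bundle.} $\Mp^c(\bV)\to\Mp^c(\bV)/\Mp^c_{\bF}(\bV)$ is a principal bundle, and by the homogeneous description above its base is $\Lag(\bV)$. Here $\Mp^c_{\bF}(\bV)$ is the preimage of $\Sp_{\bF}(\bV)$. I would define the character $\Mp^c_{\bF}(\bV)\to\U(1)$ by combining the BKS character on the metaplectic part with the identity on the central $\U(1)$, normalised by modulus so that it is unitary. The flat connection comes from the left-invariant structure: the character is trivial on the identity component of the part covering $\Sp^+_{\bF}$, up to the central factor.

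To get the holonomy, I lift the Maslov loop to a path in $\Mp(\bV)$. Concretely, take the $\U(n)$ path $t\mapsto \mathrm{diag}(e^{i\pi t},1,\dots,1)$, $t\in[0,1]$, which rotates $\bF$ to itself reversing orientation. Its lift to $\Mp$ ends at an element of $\Mp_{\bF}$ whose image under the character is $\sqrt{\det}$ of $-1$ on one coordinate, namely $\pm i$, with the sign fixed by the lift. This is again a primitive fourth root.

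\emph{Second bundle.} $\widetilde{\mathfrak B}(\tau_{\bV})$ is the metalinear frame bundle of the tautological bundle. Its associated bundle via $\chi_{\mathrm{BKS}}/|\chi_{\mathrm{BKS}}|$ gives normalized half-forms. I would build an explicit isomorphism to the third bundle: the map $\Mp^c(\bV)\to\widetilde{\mathfrak B}(\tau_{\bV})$ is induced by $\Mp_{\bF}\to\ML(\bF)$, the lift from Lemma~\ref{lem:lifts} of $\Sp_{\bF}\to\GL(\bF)$. Under this map the characters match by construction, so the two bundles are isomorphic by functoriality of the associated-bundle construction.

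The main obstacle is coherence of the fourth roots. The sign ambiguity in the square-root character, and the choice of which generator of $\mu_4$ corresponds to the Maslov generator, must be fixed consistently. I would handle it by a single normalisation: choose the lift of $\Sp^+_{\bF}$ through the identity, orient the Maslov loop so that $\mu=+1$, and check directly on the one-dimensional model $\bV=\R^2$, where $\Lag\cong\U(1)/\Or(1)$ and $\Mp(\R^2)$ restricted to $\U(1)$ is the square-root cover. Since the Maslov generator for general $\bV$ comes from this $n=1$ factor, this settles the general case, giving holonomy $i$ for the chosen root in all three bundles.
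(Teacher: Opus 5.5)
Your proposal is correct in outline, but for the central comparison it follows a different route from the paper. The paper never invokes the classification of flat circle bundles by $\Hom(\pi_1(\Lag(\bV)),\U(1))$, and never computes along an explicit loop. It regards all three bundles as associated to the one principal $\Mp_{\bF}(\bV)$-bundle $\Mp(\bV)\to\Lag(\bV)$. The character identity $\widetilde{\ori}=\hat\chi_{\mathrm{BKS}}\circ\widetilde{\ind}=\hat\chi^c|_{\Mp_{\bF}(\bV)}$ then exhibits them as three factorizations of a single associated bundle, with intermediate factors $\Mp(\bV)/\Sp^+_{\bF}(\bV)\cong\Lag(\bV;\mu_4)$, $\widetilde{\mathfrak B}(\tau_{\bV})$ and $\Mp^c(\bV)$. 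The holonomy is read off as the monodromy of the connected fourfold covering, which surjects onto $\mu_4$.

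Your handling of the second bundle is exactly this argument. For the first and third bundles, however, you compute holonomies separately on the loop $t\mapsto\mathrm{diag}(e^{i\pi t},1,\dots,1)\bF$ and match them. The endpoint $\tilde g(1)$ of its lift lies over the reflection $\epsilon$ of Lemma~\ref{lem:lifts}(\ref{it:noliftmp}), so $\hat\chi(\tilde g(1))^2=\ori(\epsilon)=-1$, which gives $\pm i$ cleanly. The paper's route yields canonical isomorphisms and reduces the ``coherent choice'' to the single convention $\widetilde{\ori}=\hat\chi$ of \eqref{eq:chisqrtor}. Yours only proves that isomorphisms exist, but it exhibits the value on a concrete Maslov loop and ties it directly to the order-four element $\tilde\epsilon$.

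Three points need tightening.

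First, the flat structure on the third bundle. $\hat\chi^c$ is the identity on the central circle, so it is not locally constant on $\Mp^c_{\bF}(\bV)$ and does not by itself give a flat structure. You should reduce the structure group via $\Mp^c(\bV)\cong\Mp(\bV)\times_{\Mp_{\bF}(\bV)}\Mp^c_{\bF}(\bV)$. Equivalently, descend $\tfrac12\eta^\ast\Theta_{\U(1)}$, which vanishes on $\Mp(\bV)=\ker\eta$. This is not cosmetic: every character of $\pi_1(\Lag(\bV))\cong\Z$ lifts to $\R$, so all these flat bundles are topologically trivial and the flat structure carries all the content. The reduction is also what justifies lifting the loop to $\Mp(\bV)$ and evaluating the character at the endpoint.

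Second, the comparison map for the second bundle is $\Mp(\bV)\to\widetilde{\mathfrak B}(\tau_{\bV})$, equivariant along $\widetilde{\ind}$ of Lemma~\ref{lem:indtilde}. It is not a map out of $\Mp^c(\bV)$, and Lemma~\ref{lem:lifts} produces no map to $\ML(\bF)$.

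Third, the normalisation in your last paragraph does not fix the sign. The lift of $\Sp^+_{\bF}(\bV)$ is already unique, and reversing the loop inverts both holonomies. The $n=1$ computation still depends on which of the two conjugate Hess characters you use. What fixes coherence is choosing $\hat\chi^c$ so that $\hat\chi(\tilde g(1))$ is the generator of $\mu_4$ by which the Maslov loop acts on $\Lag(\bV;\mu_4)$, i.e.\ $\widetilde{\ori}=\hat\chi$.
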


Here $\widetilde{\mathfrak B}(\tau_{\bV})$ is the bundle of metalinear frames of the tautological bundle $\tau_{\bV} \to \Lag(\bV)$ \cite{BKS,GuilleminSternberg}.

The idea of the proof is to view $\Mp(\bV)\to\Lag(\bV)\cong\Mp(\bV)/\Mp_{\bF}(\bV)$ as a principal
$\Mp_{\bF}(\bV)$-bundle, where $\Mp_{\bF}(\bV)$ is the preimage of $\Sp_{\bF}(\bV)$ in the
metaplectic covering $\ssigma:\Mp(\bV)\to\Sp(\bV)$. The component group
$\pi_0(\Mp_{\bF}(\bV))$ is cyclic of order four (Lemma~\ref{lem:lifts}), so we may choose an
identification $\widetilde{\ori}:\pi_0(\Mp_{\bF}(\bV))\xrightarrow{\ \cong\ }\mu_4$. This character
relates to the normalized Hess character $\hat\chi^c:\Mp^c_{\bF}(\bV)\to\U(1)$
\cite[Thm.~3.5]{Hess} and the normalized BKS character
$\hat\chi_{\mathrm{BKS}}:\ML(\bF)\to\U(1)$ \cite[Lem.~5.3(b)]{BKS},
\cite[(4.1), p.~253]{GuilleminSternberg} by
\begin{equation}\label{eq:chiagree}
  \widetilde{\ori}
  \;=\;\hat\chi_{\mathrm{BKS}}\circ\widetilde{\ind}
  \;=\;\hat\chi^c\big|_{\Mp_{\bF}(\bV)} .
\end{equation}
Equivalently, the diagram
\begin{equation}\label{eq:threecharacters}
\begin{tikzcd}[row sep=2.2em, column sep=5em, ampersand replacement=\&]
 \& \ML(\bF) \ar[dr, "\hat\chi_{\mathrm{BKS}}"] \& \\
\Mp_{\bF}(\bV) \ar[ur, "\widetilde{\ind}"] \ar[r, "\widetilde{\ori}"] \ar[dr, hook, "\inc"'] \&
  \mu_4 \ar[r, hook, "\inc"] \& \U(1) \\
 \& \Mp^c_{\bF}(\bV) \ar[ur, "\hat\chi^c"'] \&
\end{tikzcd}
\end{equation}
commutes (Lemmas~\ref{lem:chicrestricts} and \ref{lem:indtilde}). The theorem then follows
because the intermediate factor of a composition of associated bundle constructions is itself an
associated bundle, and the associated bundle of the composite character is the composition of
the operations of taking associated bundles.

Here $\Mp^c_{\bF}(\bV)$ is the preimage of $\Sp_{\bF}(\bV)$ in the metaplectic-c circle extension
$\ssigma^c:\Mp^c(\bV)\to\Sp(\bV)$. While the metaplectic group $\Mp(\bV)$ admits a lifting of
$\Sp^+_{\bF}(\bV)\hookrightarrow\Sp(\bV)$ but not of $\Sp_{\bF}(\bV)\hookrightarrow\Sp(\bV)$, the
metaplectic-c group $\Mp^c(\bV)$ admits liftings of both (Lemma~\ref{lem:lifts}). The geometry of
this situation is shown in Figure~\ref{fig:gssquare}. So a principal $\Sp_{\bF}(\bV)$-bundle need
not extend to a principal $\Mp_{\bF}(\bV)$-bundle, but always extends to a principal
$\Mp^c_{\bF}(\bV)$-bundle.

\subsection{Construction and factorization of the EBK holonomy}
\label{subsec:gauss}

A metaplectic-c prequantization $(P^c,\gamma)\to P\to M$ descends to a prequantization over the
Lagrangian Grassmannian bundle as follows. By \cite[Prop.~5.5]{KobayashiNomizu}, associated
bundles may be identified with the orbit space quotients, so
\[ \Lag(TM)\;:=\;P\times_{\Sp(\bV)}\Lag(\bV)
   \;=\;P\times_{\Sp(\bV)}(\Sp(\bV)/\Sp_{\bF}(\bV))
   \;=\;P/\Sp_{\bF}(\bV) , \]
and alternatively
\[ \Lag(TM)\;=\;P^c\times_{\Mp^c(\bV)}(\Mp^c(\bV)/\Mp^c_{\bF}(\bV))
   \;\cong\;P^c/\Mp^c_{\bF}(\bV). \]
Hence $P^c\to\Lag(TM)$ is a principal $\Mp^c_{\bF}(\bV)$-bundle. Then taking the bundle
associated by the normalized Hess character $\hat\chi^c$ of \S\ref{subsec:gssquare} gives
a principal $\U(1)$-bundle over $\Lag(TM)$,
\begin{equation}\label{eq:Yassoc}
  Y\;:=\;P^c\times_{\hat\chi^c}\U(1)\;\longrightarrow\;\Lag(TM).
\end{equation}
The central connection $\gamma$ descends to a principal circle connection $\tilde\gamma$ on
$Y\to\Lag(TM)$. There are two choices of Hess character corresponding to a choice of primitive fourth root of unity and the descent depends on this choice. 

\begin{namedproposition}[\ref{prop:gammadescends}]
Let $(P^c,\bu,\gamma)$ be a metaplectic-c prequantization of a symplectic manifold $(M,\omega)$,
and let $\pi:\Lag(TM)\to M$ be its Lagrangian Grassmannian bundle. Then for a choice of normalized Hess character $\hat{\chi}^c$, the central connection $\gamma$ descends to a
principal connection $\tilde\gamma$ on $Y\to\Lag(TM)$, whose curvature form is $\pi^\ast\omega$.
Moreover, $\tilde\gamma$ is flat on each fibre $\Lag(T_xM)$, with holonomy a primitive fourth
root of unity on the generators of $\pi_1(\Lag(T_xM))$.
\end{namedproposition}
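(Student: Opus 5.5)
The plan is to treat $Y=P^c\times_{\hat\chi^c}\U(1)$ as the quotient of $P^c\times\U(1)$ by $\Mp^c_{\bF}(\bV)$ acting by $(p,z)\cdot g=(pg,\hat\chi^c(g)^{-1}z)$, and to define $\tilde\gamma$ by descending the $\U(1)$-valued $1$-form
\[ \Gamma\;:=\;\pr_1^\ast\gamma-\pr_1^\ast\big(\hat\chi^c_\ast\circ\vartheta\big)+\pr_2^\ast(z^{-1}\dd z) \]
on $P^c\times\U(1)$. Here $\vartheta$ is the piece of a connection on $P^c\to\Lag(TM)$ that corrects for the non-central directions, and $\hat\chi^c_\ast:\mathfrak{mp}^c_{\bF}\to i\R$ is the differential of the character. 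A more economical route avoids choosing $\vartheta$. Since $\gamma$ is $\Mp^c(\bV)$-invariant and central, it is a $\u(1)$-valued form on $P^c$. The vertical directions of $P^c\to\Lag(TM)$ are generated by $\mathfrak{mp}^c_{\bF}$, and the form is basic exactly when it restricts correctly to these vertical directions. I therefore first examine $\gamma$ evaluated on fundamental vector fields $\xi^\#$ for $\xi\in\mathfrak{mp}^c_{\bF}\cong\mathfrak{sp}_{\bF}\oplus\u(1)$. By Definition~\ref{def:centralconnection}, $\gamma(\xi^\#)$ equals the $\u(1)$-component of $\xi$ with respect to the splitting $\mathfrak{mp}^c=\mathfrak{sp}\oplus\u(1)$ induced by $\sigma^c$.

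The first key step is therefore a Lie algebra identity: the differential of the normalized Hess character $\hat\chi^c$ on $\mathfrak{mp}^c_{\bF}$ must coincide with this $\u(1)$-projection. I would verify it from the explicit form of $\hat\chi^c$ in \S\ref{subsec:gssquare}. Its restriction to the central $\U(1)$ is the identity, because "normalized" should mean exactly that. On $\Mp_{\bF}(\bV)$, $\hat\chi^c$ factors through $\widetilde{\ori}$ into $\mu_4$ by \eqref{eq:chiagree}, so it is locally constant there and its differential vanishes on $\mathfrak{sp}_{\bF}$. Granting this, I set $\Gamma=\pr_1^\ast\gamma-\pr_2^\ast(z^{-1}\dd z)$ with the sign fixed by the action convention. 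Then $\Gamma$ annihilates the generators of the $\Mp^c_{\bF}(\bV)$-orbits. It is also invariant, because $\gamma$ is $\Mp^c(\bV)$-invariant and $z^{-1}\dd z$ is invariant under multiplication by the constant $\hat\chi^c(g)^{-1}$. Hence $\Gamma$ is basic and descends to $\tilde\gamma$ on $Y$. Equivariance under the residual $\U(1)$ acting on the $z$-factor, together with normalization on the fibre generator, makes $\tilde\gamma$ a principal connection.

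For the curvature, $\dd\Gamma=\pr_1^\ast\dd\gamma=i\,\pr_1^\ast\rho^{c\ast}\omega$. Since $\rho^c$ factors as $P^c\to\Lag(TM)\xrightarrow{\pi}M$, this form descends to $i\,\pi^\ast\omega$. For the fibre statement, I restrict to $\Lag(T_xM)$, where $\pi^\ast\omega$ pulls back to zero, so $\tilde\gamma$ is flat there. The fibre of $Y$ over $\Lag(T_xM)$ is, after choosing a point of $P^c_x$, the bundle $\Mp^c(\bV)\times_{\Mp^c_{\bF}(\bV)}\U(1)$ with the restricted connection. Theorem~\ref{thm:threeways} identifies this as a flat bundle whose holonomy around the Maslov generator is a primitive fourth root of unity. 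It remains to check that the flat structure there agrees with $\tilde\gamma$. Both connections are induced by the flat (Maurer--Cartan $\u(1)$-component) connection on $\Mp^c(\bV)$, whose restriction to a fibre of $P^c$ is $\gamma|_{P^c_x}$.

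I expect the main obstacle to be the Lie algebra identity $\dd\hat\chi^c=\gamma$ on vertical vectors, which determines whether the descent works at all. It depends on how Hess's character is normalized, and the remaining steps are routine once it holds. A secondary subtlety is that the holonomy should be computed using the identification of $P^c_x$ with $\Mp^c(\bV)$, and the result must be shown independent of that choice; invariance of $\gamma$ handles this. The two choices of $\hat\chi^c$ yield the two primitive fourth roots $\pm i$.
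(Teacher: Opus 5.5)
Your proposal is correct and is essentially the paper's argument. Descending $\pr_1^\ast\gamma+\pr_2^\ast(z^{-1}\dd z)$ from $P^c\times\U(1)$ to $P^c\times_{\hat\chi^c}\U(1)$ is the same as the paper's descent of $\gamma$ along $P^c\to P^c/\ker\hat\chi^c$, and your identity $\dd\hat\chi^c=\tfrac12\dd_1\eta$ on $\mathfrak{mp}^c_{\bF}$ packages the paper's two checks: $\dd_1\eta$ vanishes on the Lie algebra of $\ker\hat\chi^c\le\ker\eta^2$, and $\tfrac12\dd_1\eta|_{\u(1)}=\id$. The curvature and fibrewise-holonomy steps via Theorem~\ref{thm:threeways} then go the same way, with your explicit matching of the flat structures made slightly more visible than in the paper.

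One small correction: $\hat\chi^c$ restricts to the identity on the central circle because $\chi^c([1,\xi])=\xi$ by \eqref{eq:chicdef}. This does not come from the normalization, which only divides out the modulus $|\chi^c|$.
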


The unique character $\eta:\Mp^c(\bV)\to\U(1)$ restricting to the squaring map on the central
circle \cite[\S43]{Weil-unitary} gives a principal circle bundle with connection $(Y_\eta,\gamma_\eta)$ over $M$, where
$Y_\eta:=P^c\times_\eta\U(1)\cong P^c/\Mp(\bV)$ and $\gamma_\eta$ descends from $2\gamma$.

For a Lagrangian submanifold $\iota_L:L\hookrightarrow M$, the prequantization over $\Lag(TM)$
pulls back by the Gauss section $s_L:L\to\Lag(TM)|_L$ with the following holonomies.

\begin{namedtheorem}[\ref{thm:ebk}]
Let $(M,\omega)$ be metaplectic-c prequantizable, let $(P^c,\gamma)$ be a metaplectic-c
prequantization of $(M,\omega)$, and let $(Y,\tilde\gamma)\to\Lag(TM)$ be its descent to the
Lagrangian Grassmannian bundle for a choice of normalized Hess character $\hat{\chi}^c$. Let $L\subseteq M$ be a Lagrangian submanifold and let
$\lambda\in H_1(L;\Z)$.
\begin{enumerate}[label=\textup{(\arabic*)}]
\item The holonomy of $s_L^\ast(Y,\tilde\gamma)$ on $\lambda$ satisfies
\[ \Hol_{s_L^\ast Y}(\lambda)^2
   \;=\;\exp\Big(2\pi i\oint_\lambda\tfrac{i\,s_\lambda^*\gamma_\eta}{h}\Big)
        \cdot(-1)^{\langle w_1(TL),\lambda\rangle} . \]
For the two choices of Hess character (\S\ref{subsec:gssquare}), the holonomies differ by
$(-1)^{\langle w_1(TL),\lambda\rangle}$.

\item If $w_2(TM)=0$, there exists a splitting
$(P^c,\gamma)\cong\tilde P\times_{\mu_2}(Y_{\mathrm{KS}},\gamma_{\mathrm{KS}})$
(\S\ref{subsec:splitting}), and for every Lagrangian submanifold $L$, the holonomy of
$s_L^\ast(Y,\tilde\gamma)$ on $\lambda$ splits as
\[ \Hol_{s_L^\ast Y}(\lambda)
   \;=\;\exp\Big(2\pi i\oint_\lambda\tfrac{i\,s_\lambda^*\gamma_{\mathrm{KS}}}{h}\Big)
        \cdot i^{\Mas_4(\lambda)} , \]
where $\Mas_4(\lambda)$ is the mod $4$ Maslov index of $\lambda$ with respect to the fourfold Maslov covering associated to $\tilde P$ (\S\ref{subsec:ebkcondition}).

\item If $c_1(TM)=0$, there exist a splitting
$(P^c,\gamma)\cong\tilde P\times_{\mu_2}(Y_{\mathrm{KS}},\gamma_{\mathrm{KS}})$ and a lifting of
its mod $4$ Maslov index $\Mas_4$ to an integer-valued Maslov index $\Mas:H_1(L;\Z)\to\Z$ such that
\[ \Hol_{s_L^\ast Y}(\lambda)
   \;=\;\exp\Big(2\pi i\Big[\oint_\lambda\tfrac{i\,s_\lambda^*\gamma_{\mathrm{KS}}}{h}
     +\tfrac{\Mas(\lambda)}{4}\Big]\Big) . \]

\item The splitting obstructions restrict to $L$ as
\[
  \iota_L^\ast w_2(TM)=w_1(TL)^2\ \text{ in } H^2(L;\Z_2),
  \qquad
  \iota_L^\ast c_1(TM)=\beta_2\big(w_1(TL)\big)\ \text{ in } H^2(L;\Z).
\]
Consequently, if $L$ is orientable, there
exist a splitting of $\iota_L^\ast(P^c,\gamma)$ and a lifting of its mod $4$ Maslov index to an
integer-valued Maslov index $\Mas:H_1(L;\Z)\to\Z$ such that
\[ \Hol_{s_L^\ast Y}(\lambda)
   \;=\;\exp\Big(2\pi i\Big[\oint_\lambda\tfrac{i\,s_\lambda^*\gamma_{\mathrm{KS}}}{h}
     +\tfrac{\Mas(\lambda)}{4}\Big]\Big) . \]
\end{enumerate}
\end{namedtheorem}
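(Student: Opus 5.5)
The plan is to prove the four parts in order, computing everything from the descended bundle $(Y,\tilde\gamma)$ of Proposition~\ref{prop:gammadescends} and the character diagram \eqref{eq:threecharacters}.

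\emph{Part (1).} First show that the square of the normalized Hess character is $\eta$ twisted by the orientation character of $\Sp_{\bF}(\bV)$:
\[
(\hat\chi^c)^2=\eta\big|_{\Mp^c_{\bF}(\bV)}\cdot\big(\operatorname{sgn}\det\circ\,\sigma^c\big|_{\bF}\big).
\]
Both sides restrict to $z\mapsto z^2$ on the central $\U(1)$. By \eqref{eq:chiagree}, on $\Mp_{\bF}(\bV)$ the left side is $\widetilde{\ori}^2$, the nontrivial sign on $\pi_0(\Sp_{\bF})\cong\Z_2$. It therefore remains to check that $\eta$ is trivial on $\Mp_{\bF}(\bV)$ up to this sign; this follows from $\eta$ factoring through $\Mp^c/\Mp$. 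Taking associated bundles then gives
\[
Y^{\otimes 2}\cong\pi^\ast Y_\eta\otimes\det{}_{\Z_2}(\tau),
\]
with $\tilde\gamma^{\otimes2}$ corresponding to $\pi^\ast\gamma_\eta$ plus the flat connection on the orientation bundle of the tautological bundle $\tau$. Pulling back along $s_L$ gives $s_L^\ast\tau=TL$ and $\pi\circ s_L=\iota_L$, and computing holonomy yields the formula. The two Hess characters differ by composing with complex conjugation on $\mu_4$, that is, by the sign character $\operatorname{sgn}\det$. Their associated bundles therefore differ by $\det_{\Z_2}\tau$, whose pullback has holonomy $(-1)^{\langle w_1(TL),\lambda\rangle}$.

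\emph{Part (2).} When $w_2(TM)=0$, the metaplectic frame bundle $\tilde P$ exists. Combining Theorem~\ref{cor:Pc} with the torsor statement, the class $w(P,\sigma^c,\omega)$ factors, so $P^c\cong\tilde P\times_{\mu_2}Y_{\mathrm{KS}}$ with $\gamma$ induced by $\gamma_{\mathrm{KS}}$ and the trivial connection on $\tilde P$; this is the splitting of \S\ref{subsec:splitting}. Then $P^c/\Mp^c_{\bF}$ contains $\tilde P/\Mp^+_{\bF}$, where $\Mp^+_{\bF}$ is the identity component, which gives $\Lag(TM;\mu_4)=\tilde P\times_{\Mp}\Lag(\bV;\mu_4)$. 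Theorem~\ref{thm:threeways}, applied fibrewise, identifies
\[
Y\cong\Lag(TM;\mu_4)\times_{\mu_4}\U(1)\otimes\pi^\ast Y_{\mathrm{KS}}
\]
as bundles with connection. The holonomy along $s_L\circ\lambda$ is then the product of the Kostant--Souriau holonomy and the monodromy of the $\mu_4$-cover composed with the inclusion $\mu_4\hookrightarrow\U(1)$, which is $i^{\Mas_4(\lambda)}$ by definition.

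\emph{Part (3).} When $c_1(TM)=0$, we also have $w_2\equiv c_1\bmod 2$ equal to zero, so part (2) applies. Moreover, a trivialization of $\det_\C(TM,J)^{\otimes2}$ gives a global squared-phase map $\Lag(TM)\to\U(1)$. Its lift to $\R$ defines the integer Maslov index, and reducing it mod $4$ recovers $\Mas_4$ for a compatible choice of $\tilde P$, namely the square root of the $\det$-trivialization. Choosing that $\tilde P$ in the splitting gives the formula.

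\emph{Part (4).} Use $TM|_L\cong TL\otimes\C$. Then $w_2(TL\otimes\C)=w_2(TL\oplus TL)=w_1(TL)^2$ by the Whitney formula (since $2w_2=0$). Also $c_1(TL\otimes\C)$ is the Bockstein of $w_1(TL)$, because the complexification of a real bundle has $c_1=\beta_2 w_1$, via $\det_\C=\det_\R\otimes\C$. If $L$ is orientable, both classes vanish, so parts (2) and (3) apply to $\iota_L^\ast$.

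I expect the main obstacle to be parts (2)--(3): proving that the isomorphism of Theorem~\ref{thm:threeways} is an isomorphism \emph{with connection} globally over $\Lag(TM)$, and that the resulting $\mu_4$-cover agrees with Seidel's Maslov covering, so that $\Mas_4$ matches $\Mas\bmod4$. My first attempt would be to check this locally in frames, using the splitting of $\gamma$.
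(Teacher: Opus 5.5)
Your outline for (1) and (4) is the paper's argument. In (1) you turn the identity $(\hat\chi^c)^2=\eta\cdot(\ori\circ\ssigma^c)$ of \eqref{eq:chisplit} into $Y^{\otimes2}\cong\pi^\ast Y_\eta\otimes Y_{\ori}$ via \eqref{eq:picardchar}, and use $\hat\chi^c\circ\alpha=\hat\chi^c\cdot(\ori\circ\ssigma^c)$ for the other character. In (4) you do the Whitney-sum and Bockstein computation on $TM|_L\cong TL\otimes\C$.

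The genuine gap is the step you flag yourself in (2), on which (3) and (4) rest. You need the isomorphism $Y\cong\pi^\ast Y_{\mathrm{KS}}\otimes\big(\Lag(TM;\mu_4)\times_{\mu_4}\U(1)\big)$ \emph{with connection over all of $\Lag(TM)$}. Applying Theorem~\ref{thm:threeways} fibrewise cannot give this. It only identifies flat bundles on each $\Lag(T_xM)$, while the loops $s_L\circ\lambda$ run across fibres, and their holonomy is exactly what a fibrewise statement does not see. The mechanism of (1) also does not apply verbatim, because $[\tilde g,\xi]\mapsto\xi$ and $[\tilde g,\xi]\mapsto\hat\chi(\tilde g)$ are not characters of $\Mp^c_{\bF}(\bV)$.

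The missing idea is to lift along the splitting. Write $P^c\cong(\tilde P\times_M Y_{\mathrm{KS}})/\Delta$. Over $\Lag(TM)\cong\tilde P/\Mp_{\bF}(\bV)$, the fibre product $\tilde P\times_M Y_{\mathrm{KS}}$ is a principal $\Mp_{\bF}(\bV)\times\U(1)$-bundle covering the $\Mp^c_{\bF}(\bV)$-bundle $P^c$. On it, $\hat\chi^c$ pulls back to the honest product character $(\tilde g,\xi)\mapsto\widetilde{\ori}(\tilde g)\,\xi$, by \eqref{eq:chicdef} and Lemma~\ref{lem:chicrestricts}. Then \eqref{eq:picardchar} gives the global factorization with connections: $\gamma$ vanishes on the $\mathfrak{mp}_{\bF}$-directions and is $\gamma_{\mathrm{KS}}$ on the circle factor. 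Finally $\tilde P\times_{\widetilde{\ori}}\U(1)\cong(\tilde P/\Sp^+_{\bF}(\bV))\times_{\mu_4}\U(1)=\Lag(TM;\mu_4)\times_{\mu_4}\U(1)$, since $\ker\widetilde{\ori}$ is the lifted $\Sp^+_{\bF}(\bV)$. This is Proposition~\ref{prop:tensorfactor}. In \v{C}ech terms, your ``local frames'' idea works only once you use that the splitting supplies a genuine cocycle $(\tilde g_{jk},\xi_{jk})$, not representatives defined up to $\Delta$. Also, $\tilde P/\Sp^+_{\bF}(\bV)$ is a fourfold covering of $P^c/\Mp^c_{\bF}(\bV)=\Lag(TM)$, not a subset of it.

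For (3), defining $\Mas$ through a squared-phase map $\Phi:\Lag(TM)\to\U(1)$, built from a trivialization $\theta$ of $\det_\C(TM,J)$, is a legitimate alternative. But it leaves open the comparison $\Mas\bmod4=\Mas_4$, which you list as an obstacle. The paper avoids this by using the lift $P_\infty$ of $P$ to $\widetilde{\Sp}(\bV)$, with $\tilde P=P_\infty/(2\Z)$ and $\Lag(TM;\Z)=P_\infty/\Sp^+_{\bF}(\bV)$. A generator of $\ker\big(\widetilde{\Sp}(\bV)\to\Sp(\bV)\big)$ acts on $\Lag(\bV;\Z)$ by $2$, so $\Lag(TM;\mu_4)\cong\Lag(TM;\Z)/(4\Z)$ and the comparison is tautological. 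On your route it is a short unitary-frame check. For the square-root structure $\{(f,z):z^2=\det f/\theta\}$, the fourfold covering is $\{(\Lambda,z):z^4=\Phi(\Lambda)\}$, with monodromy $i^{\pm k}$ around a loop on which $\Phi$ winds $k$ times.

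Two further points need an argument rather than an assertion: ``choosing that $\tilde P$ in the splitting'', and ``the class $w(P,\ssigma^c,\omega)$ factors''. Your own idea supplies this if you state it properly. $w_2(TM)=0$ forces $e^{i\int\omega}=1$, so a Kostant--Souriau prequantization $Y_{\mathrm{KS}}$ exists. For \emph{any} metaplectic structure $\tilde P$, $\tilde P\times_{\mu_2}Y_{\mathrm{KS}}$ is then a lifting of $P$ with central curvature $\omega$. Corollary~\ref{cor:Pc}(2) lets you absorb the flat difference from $P^c$ into $Y_{\mathrm{KS}}$.

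The paper argues differently. It cites Corollary~\ref{cor:splittings} for existence. In (4) it uses that the first-component map from splittings to metaplectic structures is an equivariant map of $H^1(\,\cdot\,;\Z_2)$-torsors, hence bijective. Either argument also handles (4) over $L$, where $\iota_L^\ast\omega=0$.
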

We call $L$ \emph{quantizable with respect to $(P^c,\gamma,\hat\chi^c)$} when this holonomy
character is trivial (Definition~\ref{def:quantizable}). By \textup{(1)}, the choice of
$\hat\chi^c$ matters only on nonorientable Lagrangian submanifolds; for
$\mathbb{RP}^2\subset\mathbb{CP}^2$ exactly one choice makes $\mathbb{RP}^2$ quantizable
(Example~\ref{ex:rp2}).

The holonomies can be compared with the holonomies of the WKB ansatz as follows. The Hess character satisfies
\begin{equation}\label{eq:chisquare1}
  \chi^c\big([\tilde g,\xi]\big)
  \;=\;\xi\,\chi(\tilde g) ,\qquad [\tilde g,\xi]\in\Mp^c_{\bF}(\bV) ,
\end{equation}
and admits the polar factorization
\begin{equation}\label{eq:chipolar}
  \chi^c
  \;=\;\big|\chi^c\big|\cdot\hat\chi^c .
\end{equation}
We can read the value in two different ways,
\[ \underbrace{\xi}_{\text{Kostant--Souriau phase}}
   \cdot\underbrace{\chi(\tilde g)}_{\text{half-form values}}
   \;=\;
   \underbrace{\big|\chi(\tilde g)\big|}_{\substack{\text{WKB amplitude}\\ \text{(half-densities)}}}
   \cdot\underbrace{\hat\chi(\tilde g)\,\xi}_{\text{WKB phase}} . \]
The representative $[\tilde g,\xi]$ is determined
only up to the simultaneous sign change $(\tilde g,\xi)\mapsto(\tilde g\beps,-\xi)$, so
$\xi$, $\chi(\tilde g)$ and $\hat\chi(\tilde g)$ are not
separately homomorphisms on $\Mp^c_{\bF}(\bV)$.

If $P^c$ is the metaplectic-c prequantization obtained from a metaplectic structure and a
Kostant--Souriau prequantization, the line bundle $s_L^\ast P^c\times_{\chi^c}\C$ factors into
the Kostant--Souriau prequantization with its half-form correction (restricted to $L$), and into
the tensor product of half-densities with the Maslov line bundle (with Kostant--Souriau phase)
\cite[(1.5.3)]{Duistermaat}, characterizing the WKB ansatz $\psi=a\,e^{iS/\hbar}$ on $L$.

\begin{table}[htbp]
\centering\small
\renewcommand{\arraystretch}{1.45}
\setlength{\aboverulesep}{0pt}\setlength{\belowrulesep}{0pt}
\setlength{\extrarowheight}{2.5pt}
\begin{tabular}{@{}>{\raggedright\arraybackslash}p{3.8cm}
                 |>{\raggedright\arraybackslash}p{4.0cm}
                 |>{\raggedright\arraybackslash}p{6.3cm}@{}}
\toprule
\emph{Integrality condition \newline (differential obstruction)} &
\emph{Topological obstruction} &
\emph{Construction when unobstructed} \\
\midrule[\heavyrulewidth]
Kostant--Weil \cite{Weil-connection,Kostant,Souriau}
  & \multicolumn{1}{>{\centering\arraybackslash}p{4.0cm}|}{---}
  & Kostant--Souriau prequantization on $M$ \\
\multicolumn{1}{@{}>{\centering\arraybackslash}p{3.8cm}|}{---}
  & Stiefel--Whitney $w_2(TM)$ \cite{KostantSpinors}
  & Metaplectic lifting on $M$, metalinear frame bundle on $\Lag(TM)$ \\
\multicolumn{1}{@{}>{\centering\arraybackslash}p{3.8cm}|}{---}
  & By polarization type \newline \cite{BKS,GuilleminSternberg,Sniatycki75}
  & Metalinear frame bundle on $(M,F)$ \cite{BKS} \\
\multicolumn{1}{@{}>{\centering\arraybackslash}p{3.8cm}|}{---}
  & $2c_1(TM)$ \cite{Seidel}
  & $\infty$-fold Maslov covering on $\Lag(TM)$ \cite{Kontsevich}
    (cotangent bundles \cite{GuilleminSternberg}) \\
\multicolumn{1}{@{}>{\centering\arraybackslash}p{3.8cm}|}{---}
  & $2c_1(TM)$ mod $N$ \cite{Seidel}
  & $N$-fold Maslov covering on $\Lag(TM)$ \cite{Seidel} \\
\hline\hline
Kostant--Weil
  & By polarization type
  & Half-form correction to Kostant--Souriau for fixed $F$ \\
Kostant--Weil
  & Stiefel--Whitney $w_2(TM)$
  & BKS pairings \cite{BKS}, fourfold Maslov coverings,
    prequantizations over $\Lag(TM)$ \\
\cmidrule[\cmidrulewidth]{1-3}
\multicolumn{2}{@{}>{\centering\arraybackslash}p{8.1cm}|}{%
   Metaplectic-c prequantizability
   \cite{Czyz,CzyzMaslov,KarasevMaslov,DazordPatissier,Hess,RobinsonRawnsley}}
  & $\Mp^c$ prequantization \cite{Hess}, prequantization over $\Lag(TM)$ \\
\hline\hline
Bohr--Sommerfeld
  & \multicolumn{1}{>{\centering\arraybackslash}p{4.0cm}|}{---}
  & Parallel section of $\iota_L^\ast Y_{\mathrm{KS}}$ \\
\multicolumn{1}{@{}>{\centering\arraybackslash}p{3.8cm}|}{---}
  & Maslov index \cite{Maslov,Arnold,Hormander}
  & Lifting of curves to the universal covering of $\Lag(\bV)$ \\
\multicolumn{1}{@{}>{\centering\arraybackslash}p{3.8cm}|}{---}
  & Maslov index mod $2q$ \cite{Gosson} (mod $4$ \cite{Hormander,Gosson94})
  & $q$-orientations \cite{Leray} (parallel sections of the Maslov line bundle
    \cite{Hormander}) \\
\multicolumn{1}{@{}>{\centering\arraybackslash}p{3.8cm}|}{---}
  & Maslov correction as holonomy \cite{Weinstein}
  & Lifting of curves to the fourfold covering of $\Lag(\bV)$ \\
\multicolumn{1}{@{}>{\centering\arraybackslash}p{3.8cm}|}{---}
  & Pullback of a Maslov covering \cite{Seidel}
  & Graded Lagrangian submanifolds \cite{Seidel} \\
\cmidrule[\cmidrulewidth]{1-3}
\multicolumn{2}{@{}>{\centering\arraybackslash}p{8.1cm}|}{%
   Bohr--Sommerfeld with constant shift \cite{Sniatycki75}}
  & --- \\
\cmidrule[\cmidrulewidth]{1-3}
\multicolumn{2}{@{}>{\centering\arraybackslash}p{8.1cm}|}{%
   Einstein--Brillouin--Keller \cite{Keller}}
  & Parallel section of $s_L^\ast Y$, canonical operators \cite{KarasevMaslov},
    global principal symbols on $L$ \cite{Duistermaat,GuilleminSternberg} \\
\bottomrule
\end{tabular}
\caption{Obstructions and constructions. The obstruction class for the existence of
metalinear frame bundles of a polarization $F$ lies in $H^2(M;\Z_2)$ \cite[\S5]{BKS} and
depends on the
polarization type of $F$: it is $w_1(F)^2$ for real $F$ \cite[V\S4]{GuilleminSternberg}
and $w_2(F)$ for complex $F$, denoted $[c]$ in \cite[Thm.~3.1]{Sniatycki75}. For a symplectic
manifold equipped with a polarization there is another, separate quantizability problem, in
which the cohomological degree itself encodes the polarization type
\cite{Sniatycki77,MirandaPresas,BlattnerRawnsley}.}
\label{tab:obstructions}
\end{table}
\FloatBarrier

\subsection{Notation}
\label{subsec:notation}

Throughout, $M$ is a paracompact smooth manifold.

\begin{enumerate}

    \item \textbf{Circles.} $\U(1)$ is the unit circle in $\C$, written multiplicatively, with
    Lie algebra $\u(1)\cong i\R$. For $z\in\C^\times$ we denote $\hat z:=z/|z|\in\U(1)$.
    Central connections are $\u(1)$-valued and curvature forms are
    real valued. We denote by $\Usm$ the sheaf of smooth $\U(1)$-valued functions on $M$ and by
    $\Uloc$ the sheaf of locally constant ones, so that
    $H^r(M;\Uloc)\cong H^r_{\mathrm{sing}}(M;\U(1))$ and $H^2(M;\Usm)\cong H^3(M;\Z)$. For two
    circle bundles with connection $Y_1$, $Y_2$, we denote by $Y_1\otimes Y_2$ the circle bundle
    with connection whose corresponding hermitian line bundle is the tensor product of the
    hermitian line bundles corresponding to $Y_1$, $Y_2$ (the Picard product, \S\ref{subsec:ebklag}).

    \item \textbf{Cyclic groups.} We view the group of $k$-th roots of unity
    $\mu_k\le\U(1)$ as a subgroup of the circle, written multiplicatively, and use it as the
    value group of a
    character, the fibre of a covering, or a holonomy. We denote by $\Z_k$ the abstract cyclic
    group, written additively and used for cohomology coefficients.

    \item \textbf{Normalizations.} Throughout, we use the normalization $h=2\pi\hbar=2\pi$,
    and only display $h$ and $\hbar$ in Section~\ref{sec:intro}. To restore the dependence
    on $h$, one views the radius of $\U(1)$, the connection, and curvature forms as quantities
    with dimensions of $\hbar$. Then the normalized quantities $\omega/h$ and $i\gamma/h$
    appearing in the integrality conditions are geometric, as are the differentials $\dd^\bullet$ of sheaf
    complexes.

    \item \textbf{Suppressions.} For \v{C}ech cochains we omit indices: $\gamma$ denotes
    $\{\gamma_j\}$ and $c$ denotes $\{c_{jk}\}$. We abbreviate hypercohomology
    $\HH^r(M;\mathcal F^\bullet)$ to $\HH^r(\mathcal F^\bullet)$ when $M$ is fixed.

    \item \textbf{Signs.} The sign in $\dd\gamma=\pm i\,(\rho^c)^\ast\omega$, and so in
    $\curv\gamma$ of Definition~\ref{def:centralconnection}, depends on the
    identification $\u(1)\cong i\R$. We follow the positive sign convention of
    \cite{Hess,GuilleminSternberg}; see \cite{RobinsonRawnsley} for the negative sign
    convention. Holonomy is defined by $\ell(1)=\ell(0)\cdot\Hol(\lambda)$ for a horizontal lift
    $\ell$ of a loop $\lambda$, and the monodromy of a covering likewise.
    If the curvature is left $\u(1)$-valued with no identification \cite{KobayashiNomizu}, or
    the circle is written additively as $\R/h\Z$ \cite{Kostant}, no sign choice arises. The sign
    of the Maslov correction likewise depends on the identification
    $\pi_0(\Mp_{\bF}(\bV))\cong\mu_4$ of \S\ref{subsec:gssquare}.
    The \v{C}ech differential is $\check\delta$, normalized on a
    $0$-cochain by $(\check\delta\gamma)_{jk}=\gamma_k-\gamma_j$, and transition functions are
    taken from local sections by $s_k=s_j\,c_{jk}$. Reversing either convention reverses the sign
    in the compatibility between local connection forms,
    $\check\delta\gamma=\dd\log c$.

    \item \textbf{Principal bundles.} We denote by $\mathbf P(H)$ the groupoid of principal
    $H$-bundles on $M$ and their isomorphisms, and $\mathbf P(H;\mathfrak h)$ when the bundles
    carry an $\mathfrak h$-valued connection; the subscript $0$ indicates flat connections, as in
    $\mathbf P_0(H;\mathfrak h)$. For abelian $H$ we denote by $\mathcal P(H;\mathfrak h)$ the
    abelian group $\pi_0\mathbf P(H;\mathfrak h)$ of isomorphism classes, and likewise for the
    decorated versions.

\end{enumerate}

\section{The homological algebra of metaplectic-c prequantizability}
\label{sec:complexes}

\subsection{Kostant--Weil integrality from truncated complexes}
\label{subsec:kostantweil}

Brylinski described the Kostant--Souriau prequantizability problem using
smooth Deligne complexes \cite[Thms.~2.2.12, 2.2.14]{Brylinski}. In this section we present
Hess's metaplectic-c prequantizability criterion \cite[Thm.~1.14]{Hess} with some modifications
to Brylinski's method.

We use the following standard facts about sheaf hypercohomology, following the notation of
\cite[\S1.2, \S1.3]{Brylinski}. For a bounded-below complex $\mathcal F^\bullet$ of sheaves of
abelian groups on $M$, $\HH^\bullet(\mathcal F^\bullet)$ is the cohomology of the derived global
sections functor $R\Gamma(-)$ applied to $\mathcal F^\bullet$. On a good cover $\mathfrak U$ that
is acyclic for $\mathcal F^\bullet$, it is computed by the total complex $(\mathcal K^\bullet,\check\dd)$ of the \v{C}ech double complex
$C^p(\mathfrak U;\mathcal F^q)$, with total differential $\check\dd=\check\delta+(-1)^p\dd$. A
quasi-isomorphism of complexes of sheaves induces isomorphisms on sheaf hypercohomology, so the
hypercohomology of a resolution of a sheaf $\underline A$ computes the ordinary sheaf cohomology
$H^\bullet(M;\underline A)$. A termwise short exact sequence of complexes of sheaves induces a
long exact sequence in sheaf hypercohomology. A morphism of complexes of sheaves induces a map
on sheaf hypercohomology.

Recall the statement of Kostant--Weil integrality.

\begin{theorem}[Th\'eor\`eme V.1 of \cite{Weil-connection}, Proposition 2.1.1 of \cite{Kostant}]
\label{thm:kostantweil}
Let $M$ be a smooth manifold and let $\omega$ be a closed $2$-form on $M$.
\begin{enumerate}[label=\textup{(\arabic*)}]
\item There exists a circle bundle with connection $\pi:(Y,\gamma)\to M$ with
$\dd\gamma=i\,\pi^\ast\omega$ if and only if $\omega$ is integral, that is,
$[\omega/2\pi]\in H^2(M;\R)$ lies in the image of $H^2(M;\Z)\to H^2(M;\R)$.
\item $H^1(M;\U(1))$ acts freely and transitively on the isomorphism classes of principal circle
bundles with connection whose curvature is $\omega$.
\end{enumerate}
\end{theorem}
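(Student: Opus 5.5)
The plan is to run the Kostant--Weil theorem through the smooth Deligne complex, in the way Brylinski does, and then read off the two statements from the resulting long exact sequence.

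Consider the two-term complex
\[
\Usm \xrightarrow{\ \dd\log\ } \Omega^1\otimes i\R .
\]
Its first hypercohomology is the group of isomorphism classes of circle bundles with connection. A \v{C}ech $1$-cocycle in the total complex $(\mathcal K^\bullet,\check\dd)$ is a pair $(c,\gamma)$ with $\check\delta c=1$ and $\check\delta\gamma=\dd\log c$. These are exactly transition functions together with local connection forms, and coboundaries are exactly gauge changes. The curvature $\dd\gamma$ glues to a global closed form. So the curvature defines a homomorphism
\[
\curv:\HH^1\big(\Usm\to\Omega^1\otimes i\R\big)\to Z^2_{\dR}(M).
\]

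To identify the image and kernel of $\curv$, I would use the termwise short exact sequence
\[
0\to\big(0\to\Omega^1\big)\to\big(\Usm\to\Omega^1\big)\to\Usm\to 0 .
\]
Instead, though, I would replace the middle complex by the quasi-isomorphic complex $\Uloc[0]$ augmented by the curvature. Concretely, extend to the three-term complex
\[
\Usm\to\Omega^1\to\Omega^2_{\mathrm{cl}} .
\]
The truncation sequence
\[
0\to\Omega^2_{\mathrm{cl}}[-2]\to\big(\Usm\to\Omega^1\to\Omega^2_{\mathrm{cl}}\big)\to\big(\Usm\to\Omega^1\big)\to 0
\]
identifies the three-term complex with $\Uloc$ up to quasi-isomorphism, by the Poincar\'e lemma and the exponential sequence. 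Its long exact sequence then reads
\[
H^1(M;\Uloc)\to \HH^1\big(\Usm\to\Omega^1\big)\xrightarrow{\ \curv\ } \Omega^2_{\mathrm{cl}}(M)\to H^2(M;\Uloc).
\]
The $H^0$ term on the left is $H^0(M;\Omega^2_{\mathrm{cl}}[-2])=0$ in degree $1$. The last map sends $\omega$ to $\exp(i[\omega])$ under $H^2(M;\R)\to H^2(M;\U(1))$; I would check this map by a \v{C}ech zig-zag.

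Part (1) follows from exactness at $\Omega^2_{\mathrm{cl}}(M)$. The form $\omega$ is a curvature if and only if $\exp(i[\omega])=1$ in $H^2(M;\U(1))$. By the Bockstein sequence for $\Z\to\R\to\U(1)$, this holds if and only if $[\omega/2\pi]$ lies in the image of $H^2(M;\Z)$.

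Part (2) follows from exactness at the middle term. The preimage $\curv^{-1}(\omega)$ is a coset of the image of $H^1(M;\Uloc)=H^1(M;\U(1))$, which is the group of flat bundles. So I must show that the map $H^1(M;\Uloc)\to\HH^1$ is injective. Injectivity follows from the preceding term: the relevant $H^1$ of the truncated piece vanishes, and degree $0$ gives $\HH^0$ of the three-term complex to $\Omega^2_{\mathrm{cl}}[-2]$ in degree $1$, which is zero. The resulting action by tensor product is then free and transitive.

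The main obstacle is the quasi-isomorphism claim, together with identifying the connecting map with $\exp(i[\cdot])$ including signs under the paper's conventions $\check\delta\gamma=\dd\log c$ and $\dd\gamma=i\,\omega$. I would verify both on a good cover: a double-complex zig-zag produces the \v{C}ech representative $\exp(i\,\cdot)$ of the de Rham–\v{C}ech class of $\omega$.
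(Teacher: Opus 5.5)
Your proposal is correct and is essentially the paper's argument: the paper likewise obtains the Kostant--Weil sequence as the long exact sequence of the truncation $0\to\mathcal T^\bullet\to\mathcal C^\bullet\to\mathcal D^\bullet\to 0$ of the de Rham resolution of $\Uloc$. It differs from yours only cosmetically, keeping the fine tail $i\Omega^2\to i\Omega^3\to\cdots$ (a resolution of your $i\Omega^2_{\mathrm{cl}}[-2]$) and reading off $\ker e^{i\int}$ through the universal coefficient pairing rather than the Bockstein sequence; the sign you flag is harmless here, since only the kernel of $\omega\mapsto e^{\pm i[\omega]}$ enters.
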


Both statements can be encoded in the exact sequence
\begin{equation}\label{eq:kostantweil}
  0 \to \mathcal{P}_0(\U(1);\u(1)) \to \mathcal{P}(\U(1);\u(1)) \xrightarrow{\curv}
  Z^2_{\dR}(M) \xrightarrow{\ e^{i\int}\ } H^2(M;\Uloc),
\end{equation}
where $\mathcal{P}_0(\U(1);\u(1))$ and $\mathcal{P}(\U(1);\u(1))$ are the sets of isomorphism
classes of principal circle bundles with flat connection, and with connection, respectively, and
$Z^2_{\dR}(M)$ denotes the closed $2$-forms on $M$, with the additional observation that every
nonempty preimage of $\curv$ is a $\mathcal P_0(\U(1);\u(1))$-torsor. The map
$e^{i\int}:Z^2_{\dR}(M)\to H^2(M;\Uloc)$ is defined under the identification
\[ H^2(M;\Uloc)\cong\Hom_\Z\big(H_2(M;\Z),\U(1)\big), \]
provided by the universal coefficient theorem, as
\[ \omega\;\longmapsto\;\big(S\mapsto e^{i\int_S\omega}\big). \]
Under the correspondence between principal circle
bundles with connection and hermitian line bundles with connection, tensor product induces the
Picard product (\S\ref{subsec:ebklag}) making these sets abelian groups and \eqref{eq:kostantweil} an exact
sequence of abelian groups.

The integral forms are exactly $Z_{\dR}^2(M)_{\Z}=\ker e^{i\int}$, so the left-hand portion of
\eqref{eq:kostantweil} is the curvature exact sequence
\begin{equation} \label{eq:curvatureses}
  0 \to \mathcal{P}_0(\U(1);\u(1)) \to \mathcal{P}(\U(1);\u(1)) \xrightarrow{\curv}
  Z^2_{\dR}(M)_\Z\to 0 .
\end{equation}

\begin{figure}[htbp]
\centering
\begin{tikzpicture}[>=stealth,font=\footnotesize]
  \def\dx{3.9}\def\dy{2.1}
  % --- cells: only the cochains themselves
  \node (c02) at (0,2*\dy)      {$\dd\gamma$};
  \node (c01) at (0,\dy)        {$\gamma$};
  \node (c11) at (\dx,\dy)      {$\check\delta\gamma-\dd\log c$};
  \node (c10) at (\dx,0)        {$c$};
  \node (c20) at (2*\dx,0)      {$\check\delta c$};
  % --- differentials
  \draw[->] (c01) -- node[above,font=\scriptsize,pos=0.22,fill=white,inner sep=1.5pt]{$\check\delta$} (c11);
  \draw[->] (c10) -- node[above,font=\scriptsize,pos=0.28,fill=white,inner sep=1.5pt]{$\check\delta$} (c20);
  \draw[->] (c01) -- node[left,font=\scriptsize,fill=white,inner sep=1.5pt,pos=0.28]{$\dd$} (c02);
  \draw[->] (c10) -- node[right,font=\scriptsize,pos=0.72,fill=white,inner sep=1.5pt]{$\dd\log$} (c11);
  % --- the box the truncation deletes, with its reading
  \begin{scope}[on background layer]
    \node[fill=black!16,rounded corners,fit=(c02),inner sep=5pt](box){};
    \node[fill=black!6,rounded corners,fit=(c11),inner sep=5pt]{};
    \node[fill=black!6,rounded corners,fit=(c20),inner sep=5pt]{};
  \end{scope}
  \node[anchor=west,font=\scriptsize,align=left] at ($(box.east)+(0.35,0)$)
    {flatness condition in $\mathcal C^\bullet$;\\ empty condition in $\mathcal D^\bullet$};
  % --- readings of the two degree-one cochains
  \node[font=\scriptsize,anchor=north west] at ($(c01.south)+(-0.30,-0.12)$) {connection forms};
  \node[font=\scriptsize,anchor=north] at ($(c10.south)+(0,-0.12)$) {transition functions};
  \node[font=\scriptsize,anchor=west,align=left] at ($(c11.east)+(0.30,0)$)
    {transformation law\\ for connection forms};
  \node[font=\scriptsize,anchor=north] at ($(c20.south)+(0,-0.12)$) {bundle gluing};
  % --- axes
  \draw[-] (-1.0,-0.95) -- (2*\dx+1.0,-0.95);
  \draw[-] (-1.0,-0.95) -- (-1.0,2*\dy+0.7);
  \foreach \i/\lab in {0/{C^0},1/{C^1},2/{C^2}}
     \node[font=\scriptsize] at (\i*\dx,-1.3) {$\lab(\mathfrak U;-)$};
  \foreach \j/\lab in {0/{\Usm},1/{i\Omega^1},2/{i\Omega^2}}
     \node[font=\scriptsize,anchor=east] at (-1.15,\j*\dy) {$\lab$};
  % --- total-degree line: separates p+q=1 from p+q=2
  \draw[dashed,thick] (-0.20*\dx,1.70*\dy) -- (1.70*\dx,-0.20*\dy);
\end{tikzpicture}
\caption{Truncation changes the interpretation of closed total degree $1$ \v{C}ech--de Rham cochains.}
\label{fig:doublecomplex}
\end{figure}

\begin{figure}[htbp]
\centering
\begin{tikzpicture}[>=stealth,font=\footnotesize]
  \def\dx{3.9}\def\dy{1.55}
  % --- cells
  \node (t03) at (0,3*\dy)   {$\dd\omega$};
  \node (t02) at (0,2*\dy)   {$\omega$};
  \node (t12) at (\dx,2*\dy) {$\check\delta\omega$};
  % --- differentials
  \draw[->] (t02) -- node[left,font=\scriptsize,fill=white,inner sep=1.5pt,pos=0.4]{$\dd$} (t03);
  \draw[->] (t02) -- node[above,font=\scriptsize,pos=0.30,fill=white,inner sep=1.5pt]{$\check\delta$} (t12);
  % --- shading of the total degree 3 components
  \begin{scope}[on background layer]
    \node[fill=black!6,rounded corners,fit=(t03),inner sep=5pt]{};
    \node[fill=black!6,rounded corners,fit=(t12),inner sep=5pt]{};
  \end{scope}
  % --- readings
  \node[font=\scriptsize,anchor=north west] at ($(t02.south)+(-0.30,-0.12)$) {local $2$-forms};
  \node[font=\scriptsize,anchor=west] at ($(t03.east)+(0.35,0)$) {closedness};
  \node[font=\scriptsize,anchor=west] at ($(t12.east)+(0.35,0)$) {gluing};
  % --- axes
  \draw[-] (-1.0,-0.95) -- (2*\dx+1.0,-0.95);
  \draw[-] (-1.0,-0.95) -- (-1.0,3*\dy+0.7);
  \foreach \i/\lab in {0/{C^0},1/{C^1},2/{C^2}}
     \node[font=\scriptsize] at (\i*\dx,-1.3) {$\lab(\mathfrak U;-)$};
  \foreach \j/\lab in {0/{\underline{\{1\}}},1/{0},2/{i\Omega^2},3/{i\Omega^3}}
     \node[font=\scriptsize,anchor=east] at (-1.15,\j*\dy) {$\lab$};
  % --- total-degree line: separates p+q=2 from p+q=3
  \draw[dashed,thick] (-0.20*\dx,2.70*\dy) -- (1.70*\dx,0.80*\dy);
\end{tikzpicture}
\caption{For $\mathcal T^\bullet$ the degree two cohomology of the total complex is the closed
$2$-forms.}
\label{fig:Tcomplex}
\end{figure}

We now realize \eqref{eq:kostantweil} as the long exact sequence of a short exact sequence of
complexes of sheaves. Consider the de Rham resolution of $\Uloc$,
\begin{equation}\label{eq:fullcomplex}
  (\mathcal{C}^\bullet, \dd^\bullet):= 0 \to \Usm \xrightarrow{\dd\log} i\Omega^1
  \xrightarrow{\dd} i\Omega^2 \xrightarrow{\dd} \cdots \xrightarrow{\dd} i\Omega^m \to 0,
\end{equation}
where $\Usm$ sits in degree $0$, $m=\dim_\R M$, and $i\Omega^p$ is the sheaf of imaginary valued
$p$-forms. We use its truncations
\begin{align*}
  (\mathcal{D}^\bullet, \dd_{\le 1}^\bullet)
    &:= 0 \to \Usm \xrightarrow{\dd\log} i\Omega^1 \to 0,\\
  (\mathcal{T}^\bullet, \dd_{\ge 2}^\bullet)
    &:= 0 \to \underline{\{1\}} \to 0 \to i\Omega^2 \xrightarrow{\dd} \cdots
       \xrightarrow{\dd} i\Omega^m \to 0.
\end{align*}

There is a short exact sequence of complexes of sheaves
\begin{equation} \label{eq:truncationses}
    0 \to \mathcal{T}^\bullet \to \mathcal{C}^\bullet \to \mathcal{D}^\bullet \to 0,
\end{equation}
whose long exact sequence in hypercohomology is
\[ \dots \to \HH^1(\mathcal{T}^\bullet) \to \HH^1(\mathcal{C}^\bullet) \to
   \HH^1(\mathcal{D}^\bullet) \to \HH^2(\mathcal{T}^\bullet) \to
   \HH^2(\mathcal{C}^\bullet) \to \dots \]

Then there is an identification of exact sequences
\begin{equation}\label{eq:kwidentification}
\begin{tikzcd}[row sep=3em, column sep=1.9em]
\HH^1(\mathcal{T}^\bullet) \ar[r] \ar[d] & \HH^1(\mathcal{C}^\bullet) \ar[r] \ar[d,"\circled{1}"] &  \HH^1(\mathcal{D}^\bullet) \ar[r, "\partial"] \ar[d,"\circled{2}"] &  \HH^2(\mathcal{T}^\bullet) \ar[r] \ar[d,"\circled{3}"] & \HH^2(\mathcal{C}^\bullet)\ar[d,"\circled{4}"]\\
0 \ar[r] &  \mathcal{P}_0(\U(1);\u(1)) \ar[r] & \mathcal{P}(\U(1);\u(1)) \ar[r, "\curv"'] & Z^2_{\dR}(M) \ar[r, "e^{i\int}"'] & H^2(M;\Uloc) \end{tikzcd}
\end{equation}
in which every square commutes and every vertical arrow is an isomorphism
(Figures~\ref{fig:doublecomplex} and~\ref{fig:Tcomplex}).

A cochain of the \v{C}ech--de Rham double complex with total degree $1$ is a pair $(\gamma,c)$
with $\gamma\in C^0(\mathfrak U;i\Omega^1)$ and $c\in C^1(\mathfrak U;\Usm)$: local connection
forms and transition functions. Its coboundary has one component in each of the three boxes of
total degree $2$, so closedness imposes one condition per box,
\begin{equation}\label{eq:circlelocal}
  \dd\gamma=0,
  \qquad
  \check\delta\gamma-\dd\log c=0,
  \qquad
  \check\delta c=1,
\end{equation}
or with indices,
\[
  \dd\gamma_j=0 \ \text{ on } U_j,
  \qquad
  \gamma_k-\gamma_j-\dd\log c_{jk}=0 \ \text{ on } U_{jk},
  \qquad
  c_{jk}\,c_{k\ell}\,c_{\ell j}=1 \ \text{ on } U_{jk\ell}.
\]
The last two say that $c$ glues a principal circle bundle and that the $\gamma$ assemble into a
connection on it. The first says that the connection is flat. For $\mathcal C^\bullet$ the first
condition is considered, so its degree-$1$ cocycles define circle bundles with flat connection,
giving \circled{1}. For $\mathcal D^\bullet$ the first condition is empty, so its degree-$1$
cocycles define circle bundles with \emph{arbitrary} connection. This is \circled{2}. The group
$\HH^2(\mathcal T^\bullet)$ is the closed $2$-forms, which is \circled{3}, and
$\HH^2(\mathcal C^\bullet)\cong H^2(M;\Uloc)$ by quasi-isomorphism, which is \circled{4}. The
connecting homomorphism sends $(\gamma,c)$ to $\check\dd(\gamma,c)=(\dd\gamma,0,1)$ and is
therefore the curvature map.

\subsection{The groupoid formulation of the classical lifting problem}
\label{subsec:liftings}

We recall the notions of groupoids, homotopy fibres, and exactness conditions for pointed
groupoids. We view a groupoid $\mathbf X$ as a category whose morphisms are
invertible, and a morphism of groupoids as a functor. The set of isomorphism classes of objects
of $\mathbf X$ is denoted by $\pi_0\mathbf X$, and the group of automorphisms of an object $x$
is denoted $\pi_1(\mathbf X,x)$. A groupoid is \textbf{discrete} if its only morphisms are
the identities. A functor $\mathbf X\to Y$ is understood as a functor to the discrete
groupoid associated to a set $Y$, necessarily factoring through
$\mathbf X\to\pi_0\mathbf X\to Y$.

Given a functor of groupoids $A:\mathbf X\to\mathbf Y$ and an object $y$ of $\mathbf Y$, the
\textbf{homotopy fibre} $\hofib{A}{y}$ is the groupoid whose objects are pairs
$(x,\hv y)$ with $x$ an object of $\mathbf X$ and
$\hv y:A(x)\xrightarrow{\ \cong\ }y$, and whose morphisms
$(x_1,\hv y_1)\to(x_2,\hv y_2)$ are the morphisms $\hv x:x_1\to x_2$ of $\mathbf X$ satisfying
$\hv y_2\circ A(\hv x)=\hv y_1$.

Let $\mathbf X\xrightarrow{A}\mathbf Y\xrightarrow{B}\mathbf Z$ be functors of groupoids, let
$z$ be an object of $\mathbf Z$ such that there exists a natural transformation $\nu$ from
$B\circ A$ to the constant functor valued at $z$. Then
$\nu_x:B(A(x))\xrightarrow{\ \cong\ }z$, so that $(A(x),\nu_x)$ is an object of
$\hofib{B}{z}$. There is therefore an induced functor
\[
  \mathbf X\longrightarrow\hofib{B}{z},
  \qquad
  x\longmapsto(Ax,\nu_x),
  \qquad
  \hv x\longmapsto A\hv x ,
\]
where $A\hv x$ is a morphism of $\hofib{B}{z}$ by naturality of $\nu$. We consider two
\textbf{exactness conditions}
for the triple $(A,B,z)$ \cite{Vitale,GrandisVitale,MantovaniMetereVitale}, depending on
whether we are viewing the image of objects up to isomorphism or the image of both of objects up
to isomorphism and of isomorphisms:
\begin{enumerate}
\item[$(\mathrm{E}1)$] There exists a natural transformation $\nu$ from $B\circ A$ to the
constant functor valued at $z$ such that the induced functor $\mathbf X\to\hofib{B}{z}$ is
essentially surjective.
\item[$(\mathrm{E}2)$] There exists a natural transformation $\nu$ from $B\circ A$ to the
constant functor valued at $z$ such that the induced functor $\mathbf X\to\hofib{B}{z}$ is full
and essentially surjective.
\end{enumerate}

The conditions depend only on the isomorphism class of $z$ in $\mathbf Z$. The induced functor
$\mathbf X\to\hofib{B}{z}$ depends on $\nu$, but the conditions do not, since they require only
that some $\nu$ exist. They descend to exactness
conditions for pointed sets and for groups
\cite[Ch.~V, 2.4, 2.4.1]{GabrielZisman}, \cite[Thm.~4.3]{Brown},
\cite[Prop.~3.1]{Vitale} for cat-groups,
\cite[Lem.~4.5, Prop.~3.6, Prop.~6.3]{MantovaniMetereVitale}.

\begin{proposition}\label{prop:piexact}
Let $\mathbf X\xrightarrow{A}\mathbf Y\xrightarrow{B}\mathbf Z$ be functors of groupoids and
$z$ be an object of $\mathbf Z$.
\begin{enumerate}[label=\textup{(\arabic*)}]
\item If $(\mathrm{E}1)$ holds, then for the sequence
\begin{equation}\label{eq:LEShomotopy}
  \pi_0\mathbf X\longrightarrow \pi_0\mathbf Y\longrightarrow\pi_0\mathbf Z
\end{equation}
the preimage of $[z]\in\pi_0\mathbf Z$ equals the image of $\pi_0\mathbf X$.

\item If $(\mathrm{E}2)$ holds and $x$ is an object of $\mathbf X$, then the six term sequence
\begin{equation}\label{eq:sixterm}
  \pi_1(\mathbf X,x)\to \pi_1(\mathbf Y,Ax)\to\pi_1(\mathbf Z,z)
  \to(\pi_0\mathbf X,[x])\to(\pi_0\mathbf Y,[Ax])\to(\pi_0\mathbf Z,[z])
\end{equation}
is exact as groups at $\pi_1(\mathbf Y,Ax)$ and exact as pointed sets
at $\pi_1(\mathbf Z,z)$, $(\pi_0\mathbf X,[x])$ and $(\pi_0\mathbf Y,[Ax])$.
\end{enumerate}
\end{proposition}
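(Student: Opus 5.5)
The plan is to argue directly from the definitions of the homotopy fibre and of $(\mathrm{E}1)$, $(\mathrm{E}2)$. Throughout, fix a natural transformation $\nu$ as in the relevant exactness condition, and write $\Phi:\mathbf X\to\hofib{B}{z}$ for the induced functor $x\mapsto(Ax,\nu_x)$.

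For (1), first show that the image lies in the preimage. For every $x$, the component $\nu_x$ is an isomorphism $B(Ax)\cong z$, so $[B(Ax)]=[z]$. Conversely, suppose $[By]=[z]$ and choose an isomorphism $\phi:By\to z$. Then $(y,\phi)$ is an object of $\hofib{B}{z}$. Essential surjectivity of $\Phi$ gives an $x$ with $(Ax,\nu_x)\cong(y,\phi)$ in $\hofib{B}{z}$. Forgetting the compatibility condition, this is in particular an isomorphism $Ax\cong y$ in $\mathbf Y$, so $[y]=[Ax]$ lies in the image.

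For (2), first fix the maps. The map $\pi_1(\mathbf Y,Ax)\to\pi_1(\mathbf Z,z)$ is $a\mapsto\nu_x\circ B(a)\circ\nu_x^{-1}$. The connecting map $\partial:\pi_1(\mathbf Z,z)\to\pi_0\mathbf X$ is defined as follows. Given $g\in\pi_1(\mathbf Z,z)$, the pair $(Ax,g\circ\nu_x)$ is an object of the homotopy fibre. By essential surjectivity there is an $x'$ with $\Phi(x')\cong(Ax,g\nu_x)$, and we set $\partial g:=[x']$.

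The main obstacle is showing that $\partial$ is well defined. If $x''$ also satisfies $\Phi(x'')\cong(Ax,g\nu_x)$, then $\Phi(x')\cong\Phi(x'')$ in $\hofib{B}{z}$. Fullness of $\Phi$ produces a morphism $x'\to x''$ in $\mathbf X$, so $[x']=[x'']$. This is exactly where $(\mathrm{E}2)$ rather than $(\mathrm{E}1)$ is needed. Note also that $\partial(1)=[x]$, since $\Phi(x)=(Ax,\nu_x)$ itself works.

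Exactness is then checked one term at a time.
\begin{enumerate}[label=\textup{(\alph*)}]
\item \emph{At $\pi_1(\mathbf Y,Ax)$.} For $f\in\pi_1(\mathbf X,x)$, naturality of $\nu$ gives $\nu_x\circ BA(f)=\nu_x$, so the composite $\pi_1(\mathbf X,x)\to\pi_1(\mathbf Z,z)$ is trivial. Conversely, if $\nu_xB(a)\nu_x^{-1}=1$, then $a$ is an automorphism of $(Ax,\nu_x)=\Phi(x)$ in the homotopy fibre. Fullness gives an $f$ with $Af=a$.
\item \emph{At $\pi_1(\mathbf Z,z)$.} We have $\partial g=[x]$ if and only if $(Ax,g\nu_x)\cong(Ax,\nu_x)$ in $\hofib{B}{z}$; for the direction from $\partial g=[x]$ to this isomorphism, apply $\Phi$ to an isomorphism $x\cong x'$. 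The isomorphism of fibre objects exists if and only if there is an $a\in\pi_1(\mathbf Y,Ax)$ with $g\nu_xB(a)=\nu_x$. This says that $g^{-1}$ lies in the image of $\pi_1(\mathbf Y,Ax)$. Since that image is a subgroup, this is equivalent to $g$ lying in the image.
\item \emph{At $\pi_0\mathbf X$.} If $[x']=\partial g$, then forgetting the compatibility in the homotopy fibre gives $Ax'\cong Ax$. Conversely, given an isomorphism $a:Ax'\to Ax$, set $g:=\nu_x\,B(a)\,\nu_{x'}^{-1}$. Then $a$ is a morphism $\Phi(x')\to(Ax,g\nu_x)$ in the homotopy fibre. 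By the well-definedness just proved, $\partial g=[x']$.
\item \emph{At $\pi_0\mathbf Y$.} This is exactly part (1).
\end{enumerate}
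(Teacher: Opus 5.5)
Your argument is correct and is essentially the paper's proof. The paper identifies $\pi_0\mathbf X$ with $\pi_0\hofib{B}{z}$ (a full, essentially surjective functor induces a bijection on isomorphism classes) and defines the connecting map by postcomposing the second component with automorphisms of $z$; this is exactly your fullness-based well-definedness of $\partial$.

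One small slip in (c). For $a$ to be a morphism $\Phi(x')\to(Ax,g\nu_x)$ you need $g\,\nu_x\,B(a)=\nu_{x'}$, that is, $g:=\nu_{x'}\,B(a)^{-1}\,\nu_x^{-1}$. This is the inverse of the element you wrote, and with your $g$ one gets $\partial(g^{-1})=[x']$ instead. The conclusion that $[x']$ lies in the image of $\partial$ is unaffected.
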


\begin{proof}
Assume $(\mathrm{E}1)$ holds. Then the induced functor $\mathbf X\to\hofib{B}{z}$ is
essentially surjective, so it descends to a surjection from $\pi_0\mathbf X$ to
$\pi_0\hofib{B}{z}$. Forgetting the second component maps $\pi_0\hofib{B}{z}$ onto the preimage
of $[z]$ in $\pi_0\mathbf Y$. Composing the two gives \textup{(1)}.

Assume $(\mathrm{E}2)$ holds. Then by \textup{(1)} the sequence is exact at
$(\pi_0\mathbf Y,[Ax])$. The induced functor $\mathbf X\to\hofib{B}{z}$ descends to a bijection
on isomorphism classes, so we can replace the fourth term by
$\big(\pi_0\hofib{B}{z},[(Ax,\nu_x)]\big)$. The map to $(\pi_0\mathbf Y,[Ax])$ sends
$[(y,B(y)\to z)]$ to the isomorphism class $[y]$ of the first component.

Since morphisms in the homotopy fibre precompose the second component, postcomposing the second
component of an object $(y,B(y)\to z)$ by an automorphism of $z$ may give nonisomorphic objects
of $\hofib{B}{z}$. The postcomposition gives a connecting map
$\pi_1(\mathbf Z,z)\to\pi_0\hofib{B}{z}$. An isomorphism class $[(y,B(y)\to z)]$ in the preimage
of $[Ax]$ has a representative
$\big(Ax,\ B(Ax)\xrightarrow{\ \nu_x\ }z\xrightarrow{\ \hv z\ }z\big)$ for an automorphism
$\hv z$ of $z$. So the sequence is exact at $(\pi_0\mathbf X,[x])$.

An automorphism $\hv z$ of $z$ is in the preimage of $[(Ax,\nu_x)]$ if there exists an
automorphism $\hv y$ of $Ax$ such that the composition
$B(Ax)\xrightarrow{\ B(\hv y)^{-1}\ }B(Ax)\xrightarrow{\ \nu_x\ }z\xrightarrow{\ \hv z\ }z$
equals $\nu_x$. This condition is equivalent to
$\hv z=\nu_x\circ B(\hv y)\circ\nu_x^{-1}$, that is, to $\hv z$ being the image of $\hv y$ under
$\pi_1(\mathbf Y,Ax)\to\pi_1(\mathbf Z,z)$. So the sequence is exact at $\pi_1(\mathbf Z,z)$.

An automorphism $\hv y$ of $Ax$ is in the preimage of the identity automorphism of $z$ if
$\nu_x\circ B(\hv y)\circ\nu_x^{-1}=\id_z$. Then $\hv y$ is an automorphism of the object
$(Ax,\nu_x)$ in the homotopy fibre $\hofib{B}{z}$, and by fullness it is the image of an
automorphism of $x$, so the sequence is exact at $\pi_1(\mathbf Y,Ax)$.
\end{proof}

Let $M$ be a smooth manifold, and let $G$ be a Lie group. Then let $\mathbf P(G)$ be the
groupoid whose objects are principal $G$-bundles over $M$ and whose morphisms are isomorphisms
of principal $G$-bundles.

\begin{definition}[Liftings]\label{def:lifting}
Let
\[ 1 \to C \to \tilde{G} \xrightarrow{\ \sigma\ } G \to 1 \]
be a central extension of Lie groups, and let $\rho:P\to M$ be a principal $G$-bundle. A
\textbf{$\sigma$-lifting} of $P$ is a pair $(\tilde P,\bu)$ consisting of a principal
$\tilde G$-bundle $\tilde\rho:\tilde P\to M$ with right action $\tilde R$ and a principal
bundle morphism $\bu:\tilde P\to P$ over $M$ equivariant along $\sigma$, in the sense that the
squares
\[ \begin{tikzcd}[row sep=2.8em, column sep=3.4em] \tilde P \ar[d, "\bu"] \ar[dd, bend right = 50, "\tilde\rho"'] & \tilde P \times \tilde G \ar[l, "\tilde R"'] \ar[d, "{(\bu,\sigma)}"]  \\ P \ar[d, "\rho"'] & P \times G \ar[l, "R"']\\  M& \end{tikzcd} \]
commute. Alternatively, a $\sigma$-lifting of $P$ is a principal $\tilde G$-bundle
$\tilde P\to M$ together with a choice of isomorphism $\hv u:\tilde P/C\cong P$ of principal
$G$-bundles, obtained by descending $\bu$.

Two $\sigma$-liftings $(\tilde P_1,\bu_1)$ and $(\tilde P_2,\bu_2)$ of the same $P$ are
\emph{isomorphic} if there is an isomorphism of principal $\tilde G$-bundles that covers the
\emph{identity automorphism} of $P$, or alternatively, such that the descended isomorphism
$\tilde P_1/C\xrightarrow{\ \cong\ }\tilde P_2/C$ agrees with the given isomorphisms
$\tilde P_1/C\xrightarrow{\ \hv u_1\ }P\xrightarrow{\ \hv u_2^{-1}\ }\tilde P_2/C$.
\end{definition}

The liftings of $P$ form a groupoid $\mathbf L(P,\sigma)$, that can be identified with the
homotopy fibres of the quotient-by-central-subgroup functor
$\,\cdot\,/C:\mathbf P(\tilde G)\to\mathbf P(G)$:
\begin{equation}\label{eq:liftinghofib}
  \mathbf L(P,\sigma)\;\cong\;\hofib{(\,\cdot\,/C)}{P}.
\end{equation}
For each nonempty homotopy fibre $\hofib{(\,\cdot\,/C)}{P}$, a choice of $\sigma$-lifting
determines an equivalence of groupoids $\mathbf P(C)\simeq\mathbf L(P,\sigma)$. Since
$\pi_0\mathbf P(C)\cong H^1(M;\mathcal C)$ is an abelian group, this equivalence makes
$\pi_0\mathbf L(P,\sigma)$ a torsor under $H^1(M;\mathcal C)$.

In classical obstruction theory \cite{Frenkel,HessKrausser,Brylinski} the lifting problem is
solved through the long exact sequence of pointed sets in sheaf cohomology
\begin{equation}\label{eq:classicalseq}
\begin{tikzcd}[row sep=3em, column sep=2.2em]
H^1(M;\mathcal C) \ar[r] \ar[d,"\cong"'] &
H^1(M;\tilde{\mathcal G}) \ar[r] \ar[d,"\cong"'] &
H^1(M;\mathcal G) \ar[r,"\delta"] \ar[d,"\cong"'] &
H^2(M;\mathcal C) \ar[d,"="] \\
\pi_0\mathbf P(C) \ar[r] &
\pi_0\mathbf P(\tilde G) \ar[r] &
\pi_0\mathbf P(G) \ar[r,"w"'] &
H^2(M;\mathcal C)
\end{tikzcd}
\end{equation}
associated to the short exact sequence of sheaves of group-valued functions
$1\to\mathcal C\to\tilde{\mathcal G}\to\mathcal G\to1$. We denote the connecting
homomorphism $\delta$ by
\[ w(\,\cdot\,,\sigma):H^1(M;\mathcal G)\longrightarrow H^2(M;\mathcal C), \]
so that $w(P,\sigma)$ is the obstruction class of
the lifting problem for $P$. Then $\mathbf L(P,\sigma)$ is nonempty if and only if
$w(P,\sigma)$ is trivial in $H^2(M;\mathcal C)$, and $H^1(M;\mathcal C)$ acts freely and
transitively on $\pi_0\mathbf L(P,\sigma)$.

At the level of groupoids there is a sequence of functors
\begin{equation}\label{eq:groupoidseq}
  \mathbf P(C)\xrightarrow{\ (\,\cdot\,)\times_C\tilde G\ }\mathbf P(\tilde G)
  \xrightarrow{\ \cdot\,/C\ }\mathbf P(G)
  \xrightarrow{\ w\ }H^2(M;\mathcal C).
\end{equation}
Because the snaking construction is not well defined on morphisms, we view
$H^2(M;\mathcal C)$ as a pointed discrete groupoid, whose objects form the abelian group
$H^2(M;\mathcal C)$. The isomorphism classes
$\pi_0\mathbf P(C)\cong H^1(M;\mathcal C)$ form an abelian group. The leftmost functor is
extension of structure group along $C\hookrightarrow\tilde G$, and lifts the map $H^1(M;\mathcal C)\to H^1(M;\tilde{\mathcal G})$
of \eqref{eq:classicalseq}. Since $C$ acts trivially on $G=\tilde G/C$ the composite
$\big(P_C\times_C\tilde G\big)/C\cong(P_C/C)\times G\cong M\times G$ is the trivial principal
$G$-bundle. This isomorphism provides the natural transformation $\nu$ at the trivial bundle
in $\mathbf P(G)$.

At $\mathbf P(G)$, the sequence \eqref{eq:groupoidseq} satisfies the exactness condition
$(\mathrm{E}1)$ over the identity element of $H^2(M;\mathcal C)$. It further satisfies the
condition that the homotopy fibre of $\,\cdot\,/C$ over $P$ is empty whenever $w(P,\sigma)$
is not equal to the identity element in $H^2(M;\mathcal C)$. At $\mathbf P(\tilde G)$, the
sequence \eqref{eq:groupoidseq} satisfies the exactness condition $(\mathrm{E}2)$ over the
trivial principal $G$-bundle. It further satisfies the property that every nonempty homotopy
fibre of $\,\cdot\,/C$ is equivalent as a groupoid to $\mathbf P(C)$. These recover the
classical obstruction and classification results on isomorphism classes.

The induced action on the isomorphism classes of the homotopy fibre differs from the action on
the fibre of the map on isomorphism classes. If the choice of identification $\hv u$ is
forgotten before defining the action, the induced isomorphisms of $\tilde P$ may descend to
nontrivial automorphisms of $P$,
resulting in an action that is transitive, but not necessarily free. This may be read from the
exact sequence
\[ \pi_1\big(\mathbf P(G),P\big)\to\pi_0\mathbf P(C)\to\pi_0\mathbf P(\tilde G)
   \to\pi_0\mathbf P(G) \]
of Proposition~\ref{prop:piexact}\textup{(2)}.

\subsection{The first hypercohomology groupoid}
\label{subsec:hypergroupoid}

We define the \emph{first hypercohomology groupoid} for a complex of sheaves of groups,
possibly nonabelian in degree zero, to complete the diagrams
\eqref{eq:kwidentification}, \eqref{eq:classicalseq} and \eqref{eq:groupoidseq}, and extend the
solution of the lifting problem to the lifting problem with prescribed central curvature.

Let $(\mathcal F^\bullet,\dd^\bullet)$ be a complex of sheaves such that $\mathcal F^0$ is a sheaf
of groups (written multiplicatively), $\mathcal F^1$ is a sheaf of abelian groups (written
additively), and the degree-zero differential $\dd^0:\mathcal F^0\to\mathcal F^1$ is a morphism
of sheaves of groups. Given a cover $\mathfrak U$, let
\[ \check{C}^0(\mathfrak{U}; \mathcal F^1) \oplus \check{C}^1(\mathfrak{U}; \mathcal F^0)
   \xrightarrow{\ D^1\ } \check{C}^0(\mathfrak{U}; \mathcal F^2)
   \oplus \check{C}^1(\mathfrak{U}; \mathcal F^1)
   \oplus \check{C}^2(\mathfrak{U}; \mathcal F^0),\]
\begin{equation}\label{eq:D1}
  D^1(\gamma, g)
  := \big(\dd^1\gamma,\ \check\delta\gamma-\dd^0 g,\ \check\delta g\big)
  = \Big(\big\{\dd^1\gamma_j\big\},\
    \big\{\gamma_k-\gamma_j-\dd^0(g_{jk})\big\},\
    \big\{g_{jk}\,g_{k\ell}\,g_{j\ell}^{-1}\big\}\Big),
\end{equation}
where the three families are indexed by $j$, by $jk$ and by $jk\ell$, and each term is
restricted to the intersection labelling it.

For $t \in \check{C}^0(\mathfrak{U}; \mathcal F^0)$, let
\begin{equation}\label{eq:D0}
  D^0\big(t, (\gamma, g)\big)
  := \Big(\big\{\gamma_j-\dd^0(t_j)\big\},\
     \big\{t_j\,g_{jk}\,t_k^{-1}\big\}\Big),
\end{equation}
with the same convention on restrictions. The map $D^0$ defines an action of $\check C^0(\mathfrak U;\mathcal F^0)$ on
$\check C^0(\mathfrak U;\mathcal F^1)\oplus\check C^1(\mathfrak U;\mathcal F^0)$. It preserves
the fibre $(D^1)^{-1}(0,0,1)$ because $\dd^1\dd^0=0$,
$\dd^0(t_j\,g_{jk}\,t_k^{-1})=\dd^0(g_{jk})+\dd^0(t_j)-\dd^0(t_k)$, and
\[ t_j\,g_{jk}\,t_k^{-1}\cdot t_k\,g_{k\ell}\,t_\ell^{-1}\cdot
   \big(t_j\,g_{j\ell}\,t_\ell^{-1}\big)^{-1}
   \;=\; t_j\,(\check\delta g)_{jk\ell}\,t_j^{-1} . \]
So the orbit set $(D^1)^{-1}(0,0,1)/\check C^0(\mathfrak U;\mathcal F^0)$ is well defined.

Let $\mathbf{H}^1_{\mathfrak U}(\mathcal F^\bullet)$ be the (small) action groupoid for this action. The
objects are the cocycles $(\gamma,g)\in(D^1)^{-1}(0,0,1)$, and the morphisms
$(\gamma,g)\to(\gamma',g')$ are $t\in\check C^0(\mathfrak U;\mathcal F^0)$ with
$D^0(t,(\gamma,g))=(\gamma',g')$.

To make this definition independent of the choice of cover $\mathfrak U$, we take the
pseudo-colimit in the $2$-category $\mathbf{Gpd}$ of small groupoids. Pseudo-colimits exist in
$\mathbf{Gpd}$ \cite[Prop.~3.1, (5.8), Prop.~5.2]{Kelly}, and coincide with \textbf{homotopy
colimits} \cite[\S6.1]{Gambino} in the homotopy $1$-category $\mathrm{Ho}(\mathbf{Gpd})$,
defined by the model structure whose weak equivalences are the categorical equivalences and
whose fibrations are the Grothendieck fibrations.

In a $2$-category we denote the \textbf{vertical composition} of $2$-morphisms by $\circ$, and
the \textbf{horizontal composition} of $2$-morphisms, as well as the horizontal composition of
a $2$-morphism with a $1$-morphism, by $\ast$.

Let $\operatorname{Cov}(M)$ be the small $2$-category of open covers of $M$ indexed by subsets
of its topology, with $1$-morphisms as refinement maps between indexed covers, and with a
single, invertible $2$-morphism between any two parallel refinement maps. A refinement map from $\mathfrak V=\{V_j\}_{j\in J}$ to
$\mathfrak U=\{U_k\}_{k\in K}$ is given by $r:J\to K$ with $V_j\subseteq U_{j_r}$, where
$j_r:=r(j)$, and we also denote it by $r:\mathfrak V\to\mathfrak U$. Let $\mathbf{Gpd}$ be the $2$-category of small groupoids.

Restriction along a refinement map $r$ commutes with $D^1$ and $D^0$, so
\[
  r^\ast(\gamma,g)
  :=\Big(\gamma_{j_r}\big|_{V_j},\,
         g_{j_r\,k_r}\big|_{V_{jk}}\Big),
  \qquad
  (r^\ast t)_j:=t_{j_r}\big|_{V_j}
\]
defines a functor ($1$-morphism)
$r^\ast:\mathbf H^1_{\mathfrak U}(\mathcal F^\bullet)\to\mathbf H^1_{\mathfrak V}(\mathcal F^\bullet)$.
Restriction along all refinement maps defines a strict $2$-functor
$\Phi:\operatorname{Cov}(M)^{\mathrm{op}}\to\mathbf{Gpd}$,
\[
  \Phi(\mathfrak U):=\mathbf H^1_{\mathfrak U}(\mathcal F^\bullet),
  \qquad \Phi(r):=r^\ast,
\]
and for parallel refinement maps $r,r':\mathfrak V\to\mathfrak U$, $\Phi$ sends the
$2$-morphism $r'\Rightarrow r$ to the natural isomorphism between restriction functors
\begin{equation}\label{eq:nudef}
  \nu_{r,r'}:(r')^\ast\Longrightarrow r^\ast ,
\end{equation}
where for each object $(\gamma,g)$ of $\mathbf H^1_{\mathfrak U}(\mathcal F^\bullet)$ the
morphism $\nu_{r,r'}(\gamma,g):(r')^\ast(\gamma,g)\to r^\ast(\gamma,g)$ is given by
\[
  \Big\{g_{j_r\,j_{r'}}\big|_{V_j}\Big\}_{j\in J}
  \in\check C^0(\mathfrak V;\mathcal F^0),
\]
over each $V_j\subseteq U_{j_r}\cap U_{j_{r'}}$. The natural isomorphisms $\nu$ satisfy the identities
\[
  \nu_{r,r}=\id,
  \qquad
  \nu_{r,r'}\circ\nu_{r',r''}=\nu_{r,r''},
  \qquad
  \nu_{r r_1,\,r' r_1}=r_1^\ast\ast\nu_{r,r'},
  \quad
  \nu_{r_2 r,\,r_2 r'}=\nu_{r,r'}\ast r_2^\ast ,
\]
for refinement maps $r_1$ and $r_2$ composable with $r$ and $r'$, because their components are
values of $g$ and
$g_{j_r\,j_{r'}}g_{j_{r'}\,j_{r''}}=g_{j_r\,j_{r''}}$.

\begin{lemma}\label{lem:refinementequiv}
Let $(\mathcal F^\bullet,\dd^\bullet)$ be a complex of sheaves such that $\mathcal F^0$ is the
sheaf of smooth functions with values in a Lie group $G$, $\mathcal F^1$ is a sheaf of abelian
groups, and $\dd^0:\mathcal F^0\to\mathcal F^1$ is a morphism of sheaves of groups. Let
$\mathfrak U$ and $\mathfrak V$ be good covers of $M$ where $\mathfrak V$ refines
$\mathfrak U$. Let $r:\mathfrak V\to\mathfrak U$ be a refinement map. Then
\[
  r^\ast:\mathbf H^1_{\mathfrak U}(\mathcal F^\bullet)\longrightarrow
  \mathbf H^1_{\mathfrak V}(\mathcal F^\bullet)
\]
is an equivalence of groupoids.
\end{lemma}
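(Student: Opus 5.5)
We prove essential surjectivity and full faithfulness of $r^\ast$ directly on cocycles, using that on a good cover every nonempty finite intersection is contractible. The key input is that on contractible open sets the relevant sheaves have no higher cohomology in the required sense. The sheaf $\mathcal F^1$ is assumed abelian; in the intended applications it is a sheaf of forms, hence fine, so \v{C}ech cochains in it can be patched by partitions of unity. For $\mathcal F^0=\mathcal G$, the set $H^1(U;\mathcal G)$ is trivial for contractible $U$, because principal $G$-bundles over contractible manifolds are trivial.

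We first reduce to a common refinement. For good covers $\mathfrak U$ and $\mathfrak V$ with $\mathfrak V\to\mathfrak U$, consider the cover $\mathfrak W=\{U_{k}\cap V_j\}$ indexed by pairs $(j,k)$, after discarding empty sets. Let $p:\mathfrak W\to\mathfrak V$ and $q:\mathfrak W\to\mathfrak U$ be the two projection refinements. The composites $q$ and $r\circ p$ are parallel refinement maps $\mathfrak W\to\mathfrak U$, so $p^\ast r^\ast\cong q^\ast$ via $\nu$. By two-out-of-three for equivalences, it suffices to show that $s^\ast$ is an equivalence for every refinement $s:\mathfrak W\to\mathfrak U$ of a good cover $\mathfrak U$ by a cover $\mathfrak W$ each of whose members lies in a single member of $\mathfrak U$. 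We may also need $\mathfrak W$ to be good; if it is not, refine it further to a good cover and use the same argument.

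Full faithfulness comes next. A morphism $s^\ast(\gamma,g)\to s^\ast(\gamma',g')$ is a family $t_j$ on $W_j$ with $\gamma'_{j_s}=\gamma_{j_s}-\dd^0 t_j$ and $g'_{j_sk_s}=t_jg_{j_sk_s}t_k^{-1}$. Define $T_a$ on $U_a$ by $T_a:=g_{a\,j_s}\,t_j\,g'^{-1}_{a\,j_s}$ on $U_a\cap W_j$. The cocycle identities show that these local definitions agree on overlaps, so they glue by the sheaf property of $\mathcal F^0$. One then checks that $T$ is a morphism $(\gamma,g)\to(\gamma',g')$ with $s^\ast T=t$. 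This gives fullness. Faithfulness follows from the same formula, which recovers $T$ from $s^\ast T$.

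Essential surjectivity is the main step. Let $(\beta,h)$ be a cocycle on $\mathfrak W$. Restrict it to the cover $\{U_a\cap W_j\}_j$ of each $U_a$. Since $U_a$ is contractible, the underlying $G$-bundle glued by $h$ is trivial there, giving $t^{a}_j$ with $h_{jk}=t^{a\,-1}_j t^{a}_k$ on $U_a\cap W_{jk}$. The transformed forms $\beta_j+\dd^0(\cdots)$ then agree on overlaps and glue to $\gamma_a\in\mathcal F^1(U_a)$. Comparing the trivializations for $a$ and $b$ on $U_{ab}$ yields $g_{ab}$, which is independent of $j$. One then verifies $D^1(\gamma,g)=(0,0,1)$ and that the $t^a_{j}$ assemble into an isomorphism $s^\ast(\gamma,g)\cong(\beta,h)$.

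The main obstacle is bookkeeping: the choices of trivialization $t^{a}$ on each $U_a$ must be made so that the comparison on $U_{ab}\cap W_j$ is constant in $j$. This uses that $t^a_j t^{b\,-1}_j$ is independent of $j$ by the cocycle condition on $h$. The second difficulty is justifying the vanishing of nonabelian $H^1$ on each $U_a$ for the restricted cover. For that we will appeal to the classification of bundles over contractible bases rather than to \v{C}ech acyclicity.
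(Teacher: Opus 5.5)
Your argument is essentially the paper's proof, but the fullness formula as written is wrong. The overall strategy matches: for essential surjectivity, trivialize the bundle glued from the finer cocycle over each contractible member of the coarser cover and glue the transformed connection forms; for fullness, transport $t$ by transition functions and glue; for faithfulness, note that a morphism is determined by its restriction.

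\textbf{The error in the fullness step.} With your (and the paper's) convention $g'_{j_sk_s}=t_j\,g_{j_sk_s}\,t_k^{-1}$, the local definitions $T_a:=g_{a\,j_s}\,t_j\,g'^{-1}_{a\,j_s}$ do not agree on $U_a\cap W_j\cap W_k$. Agreement there would require $t_j\,g'_{j_sk_s}=g_{j_sk_s}\,t_k$, which is not the given relation. Even for abelian $G$ it fails unless $(t_jt_k^{-1})^2=1$. Solving $T_a\,g_{ab}=g'_{ab}\,T_b$ with $b=j_s$ gives the correct formula $T_a:=g'_{a\,j_s}\,t_j\,g_{a\,j_s}^{-1}$ on $U_a\cap W_j$. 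This is exactly the paper's $\tilde t_k=(h'_{kj})^{-1}t_jh_{kj}$, since there $h_{kj}=\tilde g_{j_rk}$. With the corrected formula, the overlap check, the identity $\gamma'_a=\gamma_a-\dd^0T_a$, and $s^\ast T=t$ all go through. The same formula also gives faithfulness, since any morphism satisfies it on each $U_a\cap W_j$.

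\textbf{The preliminary reduction is vacuous.} Passing to $\{U_k\cap V_j\}$ does no work. Every refinement map, including $r$ itself, already sends each member into a single member of the target, so you have reduced the lemma to a statement that contains it. Moreover $\{U_k\cap V_j\}$ need not be good, but nothing needs patching. Your direct argument uses goodness only of the coarser cover, to trivialize the glued bundle over each $U_a$; the local sections over the finer cover come from the cocycle itself. So drop the reduction and argue directly for $r$.

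\textbf{Fineness of $\mathcal F^1$ is not needed.} It plays no role anywhere in your argument. The forms $\beta_j-\dd^0t^a_j$ agree on overlaps and glue by the sheaf axiom alone. This is just as well, since the lemma assumes only that $\mathcal F^1$ is a sheaf of abelian groups.
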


\begin{proof}
Let $(\gamma,g)$ be an object of $\mathbf H^1_{\mathfrak V}(\mathcal F^\bullet)$, and let $P$
be the principal $G$-bundle defined by $g$. Since $\mathfrak U$ and $\mathfrak V$ are good
covers, there exist local sections $s_j$ over $V_j$ and $\tilde s_k$ over $U_k$.

On $U_k\cap U_\ell$ the choice of local sections $\{\tilde s_k\}$ determines
$\tilde g_{k\ell}$ by $\tilde s_\ell=\tilde s_k\tilde g_{k\ell}$. On $U_k\cap V_j$, the choice
of sections $\{\tilde s_k\}$, $\{s_j\}$ determines $h_{kj}$ by $\tilde s_k=s_jh_{kj}$.

The identities $g_{j\ell}=h_{kj}h_{k\ell}^{-1}$ and $\gamma_\ell-\gamma_j=\dd^0(g_{j\ell})$ on
$U_k\cap V_j\cap V_\ell$ imply that the sections $\gamma_j+\dd^0(h_{kj})$ on $U_k\cap V_j$
agree on overlaps. So they
glue to a section $\tilde\gamma_k\in\mathcal F^1(U_k)$. Then $(\tilde\gamma,\tilde g)$ is an
object of $\mathbf H^1_{\mathfrak U}(\mathcal F^\bullet)$, as
$D^1(\tilde\gamma,\tilde g)=(0,0,1)$ by construction. On $V_j\subseteq U_{j_r}$, the
$0$-cochain $\{h_{j_rj}^{-1}\}$ gives a morphism
$(\gamma,g)\to r^\ast(\tilde\gamma,\tilde g)$ in
$\mathbf H^1_{\mathfrak V}(\mathcal F^\bullet)$. Thus $r^\ast$ is essentially surjective.

For fullness, let $(\tilde\gamma,\tilde g)$ and $(\tilde\gamma',\tilde g')$ be objects of
$\mathbf H^1_{\mathfrak U}(\mathcal F^\bullet)$ and let
$t:r^\ast(\tilde\gamma,\tilde g)\to r^\ast(\tilde\gamma',\tilde g')$ be a morphism in
$\mathbf H^1_{\mathfrak V}(\mathcal F^\bullet)$. Let $P$ and $P'$ be the principal $G$-bundles
defined by $\tilde g$ and $\tilde g'$, with local sections $\tilde s_k$ and $\tilde s'_k$ over
$U_k$, so that $\tilde s_m=\tilde s_k\tilde g_{km}$ and
$\tilde s'_m=\tilde s'_k\tilde g'_{km}$ on $U_k\cap U_m$. On $V_j$ write
$s_j:=\tilde s_{j_r}|_{V_j}$ and $s'_j:=\tilde s'_{j_r}|_{V_j}$, whose transition functions on
$V_j\cap V_\ell$ are $g_{j\ell}=(r^\ast\tilde g)_{j\ell}$ and
$g'_{j\ell}=(r^\ast\tilde g')_{j\ell}$. On $U_k\cap V_j$ let $h_{kj}$ and $h'_{kj}$ be defined
by $\tilde s_k=s_jh_{kj}$ and $\tilde s'_k=s'_jh'_{kj}$.

On $U_k\cap V_j\cap V_\ell$ we have $h_{kj}=g_{j\ell}h_{k\ell}$ and
$g'_{j\ell}=h'_{kj}(h'_{k\ell})^{-1}$, so with $t_jg_{j\ell}=g'_{j\ell}t_\ell$,
\[
  (h'_{kj})^{-1}t_jh_{kj}
  =(h'_{kj})^{-1}t_jg_{j\ell}h_{k\ell}
  =(h'_{kj})^{-1}g'_{j\ell}t_\ell h_{k\ell}
  =(h'_{k\ell})^{-1}t_\ell h_{k\ell}.
\]
Hence the sections $(h'_{kj})^{-1}t_jh_{kj}$ on $U_k\cap V_j$ agree on overlaps and glue to
$\tilde t_k\in\mathcal F^0(U_k)$. Then
$\tilde g'_{km}=\tilde t_k\tilde g_{km}\tilde t_m^{-1}$, and $\tilde t_k|_{V_j}=t_j$ whenever
$j_r=k$.

Moreover, on $U_k\cap V_j$, the identities $\tilde\gamma_k=\gamma_j+\dd^0(h_{kj})$,
$\tilde\gamma'_k=\gamma'_j+\dd^0(h'_{kj})$ and $\gamma'_j=\gamma_j-\dd^0(t_j)$ imply
\[
  \tilde\gamma_k-\dd^0(\tilde t_k)
  =\gamma_j+\dd^0(h_{kj})-\big(\dd^0(h_{kj})+\dd^0(t_j)-\dd^0(h'_{kj})\big)
  =\gamma'_j+\dd^0(h'_{kj})
  =\tilde\gamma'_k,
\]
so $\tilde t=\{\tilde t_k\}$ is a morphism
$(\tilde\gamma,\tilde g)\to(\tilde\gamma',\tilde g')$ such that $r^\ast\tilde t=t$. This proves
fullness.

Finally, a morphism over $\mathfrak U$ determines a bundle map. If two such morphisms have the
same restriction under $r^\ast$, their bundle maps agree over every $V_j$, hence over $M$.
Expressing that bundle map using the sections $\tilde s_k,\tilde s'_k$ shows that the original morphisms
agree. Thus $r^\ast$ is faithful.
\end{proof}

A \textbf{pseudo-cocone} of $\Phi$ \cite[\S7.6]{Borceux}, \cite[\S5]{Kelly} is a small
groupoid $\mathbf X$ together with functors
$\Edge_{\mathbf X}(\mathfrak U):\Phi(\mathfrak U)\to\mathbf X$ and natural isomorphisms
$\Face_{\mathbf X}(r):\Edge_{\mathbf X}(\mathfrak V)\circ\Phi(r)\Rightarrow
\Edge_{\mathbf X}(\mathfrak U)$, one for each refinement map $r:\mathfrak V\to\mathfrak U$,
such that
\begin{enumerate}[label=\textup{(\arabic*)}]
\item $\Face_{\mathbf X}(\id_{\mathfrak U})=\id_{\Edge_{\mathbf X}(\mathfrak U)}$;
\item for composable refinement maps
$\mathfrak W\xrightarrow{\ r_2\ }\mathfrak V\xrightarrow{\ r_1\ }\mathfrak U$,
\[
  \Face_{\mathbf X}(r_1)\circ\big(\Face_{\mathbf X}(r_2)\ast\Phi(r_1)\big)
  =\Face_{\mathbf X}(r_1r_2);
\]
\item for parallel refinement maps $r,r':\mathfrak V\to\mathfrak U$,
\[
  \Face_{\mathbf X}(r')=\Face_{\mathbf X}(r)\circ
  \big(\Edge_{\mathbf X}(\mathfrak V)\ast\Phi(r'\Rightarrow r)\big).
\]
\end{enumerate}
The pseudo-cocones of $\Phi$ with vertex $\mathbf X$
form a groupoid $\PsCocone(\mathbf X)$, where morphisms are natural transformations
$\Edge_{\mathbf X}(\mathfrak U)\Rightarrow\Edge'_{\mathbf X}(\mathfrak U)$ compatible with
$\Face_{\mathbf X}(r)$. Equivalently, a pseudo-cocone with vertex $\mathbf X$ is a
pseudo-natural transformation from $\Phi$ to the constant $2$-functor at $\mathbf X$.

A \textbf{pseudo-colimit} of $\Phi$ is a pseudo-cocone
$\big(\mathbf C,\Edge_{\mathbf C},\Face_{\mathbf C}\big)$ such that
\[
  \mathbf{Gpd}(\mathbf C,\mathbf X)\longrightarrow\PsCocone(\mathbf X),
  \qquad
  A\longmapsto\big(\mathbf X,\ A\circ\Edge_{\mathbf C},\ A\ast\Face_{\mathbf C}\big),
\]
is an isomorphism of groupoids for every small groupoid $\mathbf X$
\cite[Def.~7.6.1]{Borceux}, \cite[(5.4)]{Kelly}, \cite[Def.~4.2.13]{Pirashvili}. This isomorphism of
groupoids encodes the universal property, and the pseudo-colimit is unique up to isomorphism of
groupoids \cite[Prop.~7.6.2]{Borceux}, while the homotopy colimit is unique up to equivalence
of groupoids.

For two refinement maps $r,r'$ an ordinary colimit requires $r^\ast(\gamma,g)$ and
$(r')^\ast(\gamma,g)$ to have equal images, and views the natural isomorphisms
$\nu_{r,r'}$ between common refinements as identity morphisms of the colimit groupoid
$\mathbf C$. The pseudo-colimit keeps these natural isomorphisms as morphisms of the
pseudo-colimit groupoid $\mathbf C$.

\begin{definition}\label{def:HHgroupoid}
For the complex $\mathcal F^\bullet$ above, the first hypercohomology
groupoid is
\[
  \mathbf H^1(\mathcal F^\bullet)
  :=\operatorname*{ps\,colim}_{\mathfrak U\in\operatorname{Cov}(M)^{\mathrm{op}}}
    \mathbf H^1_{\mathfrak U}(\mathcal F^\bullet),
\]
where the pseudo-colimit is taken in $\mathbf{Gpd}$.
\end{definition}

\begin{proposition}\label{prop:goodcover}
Let $\mathcal F^\bullet$ satisfy the conditions of Lemma~\ref{lem:refinementequiv}, and let
$\mathfrak U$ be a good cover of $M$. Then
\[
  \Edge_{\mathbf H^1(\mathcal F^\bullet)}(\mathfrak U):
  \mathbf H^1_{\mathfrak U}(\mathcal F^\bullet)\longrightarrow\mathbf H^1(\mathcal F^\bullet)
\]
is an equivalence of groupoids.
\end{proposition}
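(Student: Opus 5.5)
We will prove the proposition by showing that the restriction of $\Phi$ to the full sub-$2$-category $\operatorname{Cov}_{\mathrm{good}}(M)$ of good covers is a diagram of equivalences over a filtered indexing $2$-category, and that the pseudo-colimit of such a diagram is computed, up to equivalence, by any one of its vertices.

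The first step is a cofinality reduction. Every open cover of the paracompact manifold $M$ admits a good refinement, and any two good covers have a common good refinement. Hence the inclusion $\operatorname{Cov}_{\mathrm{good}}(M)\hookrightarrow\operatorname{Cov}(M)$ is cofinal. Moreover, $\operatorname{Cov}(M)$ has a unique invertible $2$-morphism between any two parallel refinement maps, so it is $2$-filtered in the strongest sense. We will check, directly from the universal property, that a pseudo-cocone over $\Phi$ is determined up to unique isomorphism in $\PsCocone(\mathbf X)$ by its restriction to good covers. Given a cocone on good covers, we extend $\Edge_{\mathbf X}$ to an arbitrary cover $\mathfrak U$ by choosing a good refinement $r:\mathfrak V\to\mathfrak U$ and setting $\Edge_{\mathbf X}(\mathfrak U):=\Edge_{\mathbf X}(\mathfrak V)\circ\Phi(r)$. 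Conditions (2) and (3), together with the coherence identities of $\nu_{r,r'}$ listed before Lemma~\ref{lem:refinementequiv}, show that this extension is independent of the choices up to coherent isomorphism. Consequently the pseudo-colimit over $\operatorname{Cov}(M)$ is equivalent to the pseudo-colimit over good covers.

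The second step works on good covers. By Lemma~\ref{lem:refinementequiv}, every $\Phi(r)=r^\ast$ between good covers is an equivalence. We then build an explicit pseudo-cocone with vertex $\mathbf H^1_{\mathfrak U}(\mathcal F^\bullet)$ for the fixed good cover $\mathfrak U$. For each good $\mathfrak W$, choose a common good refinement $\mathfrak V$ with refinement maps $a:\mathfrak V\to\mathfrak U$ and $b:\mathfrak V\to\mathfrak W$. Choose a quasi-inverse $(a^\ast)^{-1}$ of $a^\ast$ and put $E(\mathfrak W):=(a^\ast)^{-1}\circ b^\ast$. The faces are assembled from the $\nu$'s and the unit and counit of the chosen adjoint equivalences. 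We take $E(\mathfrak U)=\id$ by choosing $\mathfrak V=\mathfrak U$, and we verify conditions (1)--(3).

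Next we verify the universal property up to equivalence. Given any pseudo-cocone $(\mathbf X,\Edge,\Face)$, the functor $\Edge(\mathfrak U)$ induces a morphism of cocones from $E$ to it, through the composite $\Edge(\mathfrak W)\cong\Edge(\mathfrak V)\circ b^\ast\cong\Edge(\mathfrak U)\circ a^\ast\circ(a^\ast)^{-1}\circ b^\ast$. This shows that $\mathbf H^1_{\mathfrak U}(\mathcal F^\bullet)$ with cocone $E$ is a homotopy colimit. Since the pseudo-colimit is unique up to equivalence among homotopy colimits (the $\mathrm{Ho}(\mathbf{Gpd})$ statement recalled before Definition~\ref{def:HHgroupoid}), the comparison functor $\mathbf H^1_{\mathfrak U}\to\mathbf H^1(\mathcal F^\bullet)$ is an equivalence. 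That comparison functor is exactly $\Edge_{\mathbf H^1(\mathcal F^\bullet)}(\mathfrak U)$, composed with the identity $E(\mathfrak U)$.

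The main obstacle is the coherence bookkeeping. One must verify condition (2) for the constructed cocone $E$ and show that the choices of common refinements and quasi-inverses do not break it. To handle this, I would first replace each equivalence $r^\ast$ by an adjoint equivalence. Then I would use the fact that all $2$-cells in $\operatorname{Cov}(M)$ are unique, which forces every pasting diagram built from $\nu$'s to commute by the identities $\nu_{r,r'}\circ\nu_{r',r''}=\nu_{r,r''}$ and the whiskering rules. With this in hand, the verification of condition (2) reduces to the triangle identities.
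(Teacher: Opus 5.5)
Your cocone $E$ on $\mathbf H^1_{\mathfrak U}(\mathcal F^\bullet)$ is essentially the paper's construction: $E(\mathfrak W)=(a^\ast)^{-1}\circ b^\ast$ through a common good refinement, with $E(\mathfrak U)=\id$. The concluding step, however, does not follow as written.

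What you verify is only the existence half of a universal property. Every pseudo-cocone $(\mathbf X,\Edge,\Face)$ factors through $E$ via $\Edge(\mathfrak U)$. (The composite should read $\Edge(\mathfrak W)\cong\Edge(\mathfrak V)\circ b^\ast\cong\Edge(\mathfrak V)\circ a^\ast\circ(a^\ast)^{-1}\circ b^\ast\cong\Edge(\mathfrak U)\circ E(\mathfrak W)$; as printed, $\Edge(\mathfrak U)\circ a^\ast$ does not typecheck.) This gives essential surjectivity of $\mathbf{Gpd}(\mathbf H^1_{\mathfrak U},\mathbf X)\to\PsCocone(\mathbf X)$. To call $(\mathbf H^1_{\mathfrak U},E)$ a homotopy colimit you also need this functor to be fully faithful: every morphism of cocones $G\circ E\Rightarrow G'\circ E$ must be $\tau\ast E$ for a unique $\tau:G\Rightarrow G'$. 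Weak universality alone does not pin down the vertex. For a one-vertex diagram at a nonempty groupoid $\mathbf D$, the inclusion $\mathbf D\hookrightarrow\mathbf D\sqcup\mathbf D'$ is weakly universal. So your appeal to uniqueness of homotopy colimits has nothing to apply to yet.

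The repair uses only what you already have, and it is exactly the paper's Step~5. Write $\mathbf C:=\mathbf H^1(\mathcal F^\bullet)$.
\begin{enumerate}
\item The cocone $E$ induces $A:\mathbf C\to\mathbf H^1_{\mathfrak U}$ with $A\circ\Edge_{\mathbf C}\cong E$, so $A\circ\Edge_{\mathbf C}(\mathfrak U)\cong E(\mathfrak U)=\id$.
\item Your factorization applied to $\mathbf X=\mathbf C$ gives $\Edge_{\mathbf C}(\mathfrak U)\circ E\cong\Edge_{\mathbf C}$ as cocones. Hence $(\Edge_{\mathbf C}(\mathfrak U)\circ A)\circ\Edge_{\mathbf C}\cong\id_{\mathbf C}\circ\Edge_{\mathbf C}$.
\item Since $\mathbf{Gpd}(\mathbf C,\mathbf C)\to\PsCocone(\mathbf C)$ is an isomorphism of groupoids, $\Edge_{\mathbf C}(\mathfrak U)\circ A\cong\id_{\mathbf C}$.
\end{enumerate}
Alternatively, you can check full faithfulness for $E$ directly. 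A cocone morphism is determined by its $\mathfrak U$-component, because the faces are invertible and $a^\ast$ is essentially surjective.

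There are two smaller differences from the paper. First, your cofinality reduction to good covers is unnecessary. The expression $(a^\ast)^{-1}\circ b^\ast$ makes sense for an \emph{arbitrary} cover $\mathfrak W$, since only $a^\ast$, between the good covers $\mathfrak V$ and $\mathfrak U$, needs inverting; this is how the paper handles all covers at once. Second, the paper does not track units and counits through triangle identities. It uses that postcomposition with the fully faithful $r^\ast_{\mathfrak Z\mathfrak U}$ is a bijection on natural transformations into $\mathbf H^1_{\mathfrak U}$. Faces can then be defined, and the cocone identities checked, on a fine good cover, where everything reduces to the identities for $\nu$.
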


\begin{proof}
To prove that the canonical edge functor $\Edge_{\mathbf H^1(\mathcal F^\bullet)}(\mathfrak U)$
is an equivalence, we construct a pseudo-cocone with vertex
$\mathbf H^1_{\mathfrak U}(\mathcal F^\bullet)$. By the universal property of the
pseudo-colimit, this induces a functor back from the pseudo-colimit to
$\mathbf H^1_{\mathfrak U}(\mathcal F^\bullet)$, which we show is its quasi-inverse.

To streamline the notation, we denote
$\mathbf H_{\mathfrak A}:=\mathbf H^1_{\mathfrak A}(\mathcal F^\bullet)$ for any cover
$\mathfrak A$, and the pseudo-colimit by $\mathbf C:=\mathbf H^1(\mathcal F^\bullet)$.
We construct a pseudo-cocone
$(\mathbf H_{\mathfrak U},\Edge_{\mathfrak U},\Face_{\mathfrak U})$.

\paragraph{Step 1: the faithfulness principle.}
The difficulty in constructing this pseudo-cocone is defining natural transformations between
composites of functors involving quasi-inverses. We resolve this by a lifting principle.

By Lemma~\ref{lem:refinementequiv}, if $\mathfrak Z$ is a good cover refining $\mathfrak U$,
the restriction functor
$r_{\mathfrak Z\mathfrak U}^\ast:\mathbf H_{\mathfrak U}\to\mathbf H_{\mathfrak Z}$ is an
equivalence of groupoids, in particular fully faithful. Consequently, for any two functors
$F_1,F_2:\mathbf X\to\mathbf H_{\mathfrak U}$, postcomposition with
$r_{\mathfrak Z\mathfrak U}^\ast$ induces a bijection on natural transformations:
\begin{equation}\label{eq:faithfulness}
  \operatorname{Nat}(F_1,F_2)\xrightarrow{\ \cong\ }
  \operatorname{Nat}\big(r_{\mathfrak Z\mathfrak U}^\ast\circ F_1,\,
  r_{\mathfrak Z\mathfrak U}^\ast\circ F_2\big).
\end{equation}
To define natural transformations targeting $\mathbf H_{\mathfrak U}$, or to verify equalities
between them, it is therefore necessary and sufficient to do so on a sufficiently fine good
cover $\mathfrak Z$.

\paragraph{Step 2: the edge functors.}
For each cover $\mathfrak W$ we define a functor
$\Edge_{\mathfrak U}(\mathfrak W):\mathbf H_{\mathfrak W}\to\mathbf H_{\mathfrak U}$. Choose a
good cover $\mathfrak V$ refining both $\mathfrak W$ and $\mathfrak U$, with refinement maps
$r_{\mathfrak V\mathfrak W}:\mathfrak V\to\mathfrak W$ and
$r_{\mathfrak V\mathfrak U}:\mathfrak V\to\mathfrak U$. Because $\mathfrak V$ and $\mathfrak U$
are good covers, $r_{\mathfrak V\mathfrak U}^\ast$ is an equivalence. Choose a quasi-inverse
$(r_{\mathfrak V\mathfrak U}^\ast)^{-1}$ and let
\[
  \Edge_{\mathfrak U}(\mathfrak W):=
  (r_{\mathfrak V\mathfrak U}^\ast)^{-1}\circ r_{\mathfrak V\mathfrak W}^\ast .
\]
For $\mathfrak W=\mathfrak U$ we choose the identity refinement $\mathfrak V=\mathfrak U$, so
that $\Edge_{\mathfrak U}(\mathfrak U)=\id_{\mathbf H_{\mathfrak U}}$.

\paragraph{Step 3: the faces.}
For a refinement map $r:\mathfrak W_2\to\mathfrak W_1$ we need a natural isomorphism
\[
  \Face_{\mathfrak U}(r):\Edge_{\mathfrak U}(\mathfrak W_2)\circ r^\ast
  \Longrightarrow\Edge_{\mathfrak U}(\mathfrak W_1).
\]
Let $\mathfrak V_1$ and $\mathfrak V_2$ be the good covers chosen in Step 2 for $\mathfrak W_1$
and $\mathfrak W_2$, and choose a good cover $\mathfrak Y$ refining $\mathfrak V_1$,
$\mathfrak V_2$ and $\mathfrak U$. We apply \eqref{eq:faithfulness} with
$r_{\mathfrak Y\mathfrak U}^\ast$.

Because $(r_{\mathfrak V_i\mathfrak U}^\ast)^{-1}$ is a quasi-inverse,
$r_{\mathfrak Y\mathfrak U}^\ast\circ(r_{\mathfrak V_i\mathfrak U}^\ast)^{-1}$ is naturally
isomorphic to the restriction $r_{\mathfrak Y\mathfrak V_i}^\ast$. On $\mathfrak Y$ the two
composites to be compared therefore simplify:
\begin{align*}
  r_{\mathfrak Y\mathfrak U}^\ast\circ
  \big(\Edge_{\mathfrak U}(\mathfrak W_2)\circ r^\ast\big)
  &\cong r_{\mathfrak Y\mathfrak V_2}^\ast\circ r_{\mathfrak V_2\mathfrak W_2}^\ast\circ
  r^\ast\cong r_{\mathfrak Y\mathfrak W_2}^\ast\circ r^\ast,\\
  r_{\mathfrak Y\mathfrak U}^\ast\circ\Edge_{\mathfrak U}(\mathfrak W_1)
  &\cong r_{\mathfrak Y\mathfrak V_1}^\ast\circ r_{\mathfrak V_1\mathfrak W_1}^\ast
  \cong r_{\mathfrak Y\mathfrak W_1}^\ast .
\end{align*}
Both $r_{\mathfrak Y\mathfrak W_2}^\ast\circ r^\ast$ and $r_{\mathfrak Y\mathfrak W_1}^\ast$
are restrictions along refinement maps $\mathfrak Y\to\mathfrak W_1$, so by the definition of
$\Phi$ on $2$-morphisms there is a canonical natural isomorphism $\nu_{\mathfrak Y}$ of
\eqref{eq:nudef} between them. By the bijection \eqref{eq:faithfulness} there is a unique
natural transformation targeting $\mathbf H_{\mathfrak U}$ corresponding, under the
isomorphisms displayed above, to $\nu_{\mathfrak Y}$, and we define it to be
$\Face_{\mathfrak U}(r)$. Since restriction along a refinement map between good covers is
faithful, this definition does not depend on the choice of $\mathfrak Y$.

\paragraph{Step 4: the pseudo-cocone conditions.}
By \eqref{eq:faithfulness}, two natural transformations targeting $\mathbf H_{\mathfrak U}$
are equal if and only if they are equal after restriction to a common good cover
$\mathfrak Z$. On $\mathfrak Z$ the quasi-inverses cancel up to natural isomorphism and the
faces correspond to the comparisons $\nu$, so the three conditions for $\Face_{\mathfrak U}$
reduce to the three identities for $\nu$ stated below \eqref{eq:nudef}:
\begin{enumerate}[label=\textup{(\arabic*)}]
\item $\nu_{r,r}=\id$ gives
$\Face_{\mathfrak U}(\id_{\mathfrak W})=\id_{\Edge_{\mathfrak U}(\mathfrak W)}$;
\item $\nu_{r,r'}\circ\nu_{r',r''}=\nu_{r,r''}$ gives
$\Face_{\mathfrak U}(r_1)\circ\big(\Face_{\mathfrak U}(r_2)\ast r_1^\ast\big)
=\Face_{\mathfrak U}(r_1r_2)$ for composable refinement maps $r_1,r_2$;
\item $\nu_{r\,r_1,\,r'r_1}=r_1^\ast\ast\nu_{r,r'}$ gives
$\Face_{\mathfrak U}(r')=\Face_{\mathfrak U}(r)\circ
\big(\Edge_{\mathfrak U}(\mathfrak W_2)\ast\nu_{r,r'}\big)$ for parallel refinement maps
$r,r':\mathfrak W_2\to\mathfrak W_1$.
\end{enumerate}
Thus $(\mathbf H_{\mathfrak U},\Edge_{\mathfrak U},\Face_{\mathfrak U})$ is a pseudo-cocone of
$\Phi$.

\paragraph{Step 5: the equivalence.}
Let $E:=\Edge_{\mathbf C}(\mathfrak U):\mathbf H_{\mathfrak U}\to\mathbf C$ be the canonical
edge functor. The universal property of $\mathbf C$ gives a functor
$A:\mathbf C\to\mathbf H_{\mathfrak U}$ and an isomorphism of pseudo-cocones
$A\circ\Edge_{\mathbf C}\cong\Edge_{\mathfrak U}$. Evaluating at $\mathfrak U$,
\[
  A\circ E\cong\Edge_{\mathfrak U}(\mathfrak U)=\id_{\mathbf H_{\mathfrak U}}.
\]

For the other direction, consider $E\circ A:\mathbf C\to\mathbf C$. For each cover
$\mathfrak W$ the faces $\Face_{\mathbf C}(r_{\mathfrak V\mathfrak U})^{-1}$ and
$\Face_{\mathbf C}(r_{\mathfrak V\mathfrak W})$ give an isomorphism of pseudo-cocones
\[
  E\circ\Edge_{\mathfrak U}\cong\Edge_{\mathbf C},
\]
so that
\[
  (E\circ A)\circ\Edge_{\mathbf C}\cong E\circ\Edge_{\mathfrak U}\cong\Edge_{\mathbf C},
\]
while $\id_{\mathbf C}$ induces $\Edge_{\mathbf C}$. The universal property makes
$\mathbf{Gpd}(\mathbf C,\mathbf C)\to\PsCocone(\mathbf C)$ an isomorphism of groupoids, in
particular fully faithful, so $E\circ A\cong\id_{\mathbf C}$. Hence $A$ is a quasi-inverse of
$E$, which is therefore an equivalence of groupoids.
\end{proof}

The $2$-functor $\Phi$ gives a category $F_\Phi$ fibered over $\operatorname{Cov}(M)$, whose
inductive limit over $\operatorname{Cov}(M)^{\mathrm{op}}$ exists by
\cite[Exp.~VI, 6.3]{SGA4} and is $\mathbf H^1(\mathcal F^\bullet)$. Moreover
\[
  F_{\mathbf H}(U):=\mathbf H^1_{\{U\}}(\mathcal F^\bullet|_U)
\]
defines a prestack on $\operatorname{Open}(M)$, since its morphism presheaves
$\{t\in\mathcal F^0(U):\dd^0t=\gamma-\gamma'\}$ are sheaves. Its stackification
$\widehat F_{\mathbf H}$ exists by \cite[II.2.1.3]{Giraud}, and
$\widehat F_{\mathbf H}(M)$ is the colimit of the descent categories over the covers of $M$
\cite[\S7]{Street}, so $\widehat F_{\mathbf H}(M)\simeq\mathbf H^1(\mathcal F^\bullet)$.
\begin{remark}\label{rem:naturality}
The construction is natural in $M$. Suppose that $\mathcal F^\bullet$ is defined on every
manifold and pulls back along smooth maps, as are complexes built from
sheaves of Lie group-valued functions, differential forms and locally constant functions. For a
smooth map $f:N\to M$ and a cover $\mathfrak U$ of $M$, pulling back cochains commutes with $D^0$
and $D^1$, and so gives a functor
\[
  f^\ast:\mathbf H^1_{\mathfrak U}(\mathcal F^\bullet_M)\longrightarrow
  \mathbf H^1_{f^{-1}\mathfrak U}(\mathcal F^\bullet_N),
  \qquad f^{-1}\mathfrak U:=\{f^{-1}U_j\}.
\]
These functors commute strictly with restriction along refinement maps, so they form a
pseudo-cocone and induce
\[
  f^\ast:\mathbf H^1(\mathcal F^\bullet_M)\longrightarrow\mathbf H^1(\mathcal F^\bullet_N),
\]
with $(f\circ g)^\ast\cong g^\ast f^\ast$. A morphism of complexes acts pointwise on cochains and
so commutes with $f^\ast$ on each cover. The functors it induces therefore commute with $f^\ast$
up to canonical natural isomorphism, and so do the connecting maps into $\HH^2$.
\end{remark}

\subsection{Circle extensions and central connections}
\label{subsec:centralconn}

Let $G$ be a Lie group, $\sigma:\tilde G\to G$ a Lie group double covering, and $G^c$ the
associated circle extension, or central product,
\begin{equation}\label{eq:centralext}
  1 \to \U(1) \to G^c \xrightarrow{\ \sigma^c\ } G \to 1,
  \qquad G^c\;\cong\;(\tilde G\times\U(1))/\Delta
\end{equation}
with $\Delta\cong\Z_2$ the antidiagonal.

Circle extensions of this type arise from the axioms of quantum theory and complexification of
coefficients. Since pure states are
rays \cite{Wigner}, symmetries act projectively, and we take lifts of the symmetry groups to
recover the action on the vector space of wave functions (cf.\ \cite{Bargmann}). Given a real
projective representation $\varrho:G\to\PO(V)$ whose pulled-back double covering
$G\times_{\PO(V)}\Or(V)\to G$ is isomorphic to $\sigma:\tilde G\to G$, the circle extension
\eqref{eq:centralext} can be read off the pullback squares
\begin{equation}\label{eq:pullbackGc}
\begin{tikzcd}[row sep=2.4em, column sep=2.4em]
G^c \ar[r] \ar[d,"\sigma^c"'] \ar[dr, phantom, "\lrcorner", very near start] &
  \big(\Or(V)\times\U(1)\big)/\Delta \ar[r] \ar[d,"\Ad"] \ar[dr, phantom, "\lrcorner", very near start] &
  \U(V\otimes_\R\C) \ar[d,"\Ad"] \\
G \ar[r,"\varrho"'] & \PO(V) \ar[r] & \PU(V\otimes_\R\C)
\end{tikzcd}
\end{equation}
In the symplectic case $G=\Sp(\bV)$, the metaplectic-c group $\Mp^c(\bV)$ is constructed by
pulling back the projective representation
\[ \Sp(\bV)\;\longrightarrow\;\Aut_{\U(1)}\big(\Heis(\bV;\U(1))\big)\;\xrightarrow{\ \cong\ }\;
   PN_{\U(\mathcal H)}\big(\varrho_{\mathrm{Sch}}(\Heis(\bV;\U(1)))\big)\;\hookrightarrow\;\PU(\mathcal H) \]
into the projectivization of the normalizer of the image of the Schr\"odinger representation
$\varrho_{\mathrm{Sch}}:\Heis(\bV;\U(1))\to\U(\mathcal H)$ of the Heisenberg group, where
$\Aut_{\U(1)}$ denotes the automorphisms fixing the center pointwise. It is then shown to be a central product of $\Mp(\bV)$ and $\U(1)$
\cite{Shale,Weil-unitary}, and the metaplectic representation is constructed as a lift.

In the special orthogonal case $G=\SO(V)$, the $\Spin^c(V)$ representation on the complexified
spinor space is constructed directly from Clifford modules, and descends to a projective unitary
representation of $\SO(V)$ on the projective spinor space.

The exceptional double coverings $\sigma:\Sp(\R^4)\to\SO^+(2,3)$ in conformal geometry and
$\sq:\SO(2)\to\SO(2)$ for semions in two dimensions, and the electroweak gauge group
$\U(2)\cong(\SU(2)\times\U(1))/\Z_2$, may be of particular interest.

\begin{definition}\label{def:eta}
The map
\begin{equation}\label{eq:etadef}
  \eta:G^c\longrightarrow\U(1),\qquad \eta[\tilde g,\xi]:=\xi^2
\end{equation}
is well defined on $\Delta$-cosets and gives a character
\begin{equation}\label{eq:etaseq}
  1 \to \tilde G \to G^c \xrightarrow{\ \eta\ } \U(1) \to 1
\end{equation}
restricting to the squaring map on the central $\U(1)$.
\end{definition}

Since $\eta|_{\U(1)}=\sq$, the differential at the identity of $G^c$ is
$\dd_1\eta|_{\u(1)}=2$, so $\ker\dd_1\eta$ is an ideal meeting $\u(1)$ trivially and mapping
isomorphically to $\g$. Thus $\eta$ determines a splitting $\g^c=\u(1)\oplus\g$ of Lie algebras,
and the projection onto the first summand
\begin{equation}\label{eq:mudef}
  \tfrac12\,\dd_1\eta\;:\;\g^c\longrightarrow\u(1)
\end{equation}
has a factor $\tfrac12$.

The characters of $G^c$ squaring the central circle form a $\Hom(G,\U(1))$-torsor, since for two
such characters $\eta,\eta'$ the ratio $\eta'\eta^{-1}$ descends to a character of $G$. If $\g$
is semisimple and $G$ is connected, as for the symplectic group, then $\dd_1(\eta'\eta^{-1})=0$
and the choice of $\eta$ is unique \cite[Thm.~1.11]{Hess}. Circle extensions not associated to
double coverings may admit no squaring character, for instance the Heisenberg group.

Denote by $\g^c$, $\g$ and $\u(1)\cong i\R$ the Lie algebras of $G^c$, $G$ and $\U(1)$.

\begin{lemma}\label{lem:sumMC}
Let $\Theta_G\in\Omega^1(G;\g)$, $\Theta_{G^c}\in\Omega^1(G^c;\g^c)$ and
$\Theta_{\U(1)}\in\Omega^1(\U(1);\u(1))$ be the Maurer--Cartan forms of $G$, $G^c$ and $\U(1)$.
Then
\begin{equation} \label{eq:sumMC}
  \Theta_{G^c} = \tfrac12\,\eta^\ast \Theta_{\U(1)} + (\sigma^c)^\ast \Theta_G .
\end{equation}
\end{lemma}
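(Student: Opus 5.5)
Both sides are left-invariant $\g^c$-valued $1$-forms on $G^c$, so it is enough to check that they agree at the identity. To make sense of the right-hand side I will read $\frac12\eta^\ast\Theta_{\U(1)}$ as $\u(1)$-valued and $(\sigma^c)^\ast\Theta_G$ as $\g$-valued. Both summands then land in $\g^c$ through the splitting $\g^c=\u(1)\oplus\g$ determined by $\eta$, described just before the statement. In that splitting, $\g\subset\g^c$ is identified with $\ker\dd_1\eta$ via $\dd_1\sigma^c$.

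The first step is left-invariance of each term. Because $\eta$ and $\sigma^c$ are homomorphisms, $\eta\circ L_a=L_{\eta(a)}\circ\eta$. Since $\Theta_{\U(1)}$ is left-invariant, $L_a^\ast\eta^\ast\Theta_{\U(1)}=\eta^\ast L_{\eta(a)}^\ast\Theta_{\U(1)}=\eta^\ast\Theta_{\U(1)}$. The same argument with $\sigma^c$ in place of $\eta$ shows that $(\sigma^c)^\ast\Theta_G$ is left-invariant. A left-invariant form is determined by its value at the identity, so it remains to compare values on $X\in\g^c$. There, $\Theta_{G^c}(X)=X$, the first term gives $\tfrac12\dd_1\eta(X)$, and the second gives $\dd_1\sigma^c(X)$.

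The remaining step, and the only one needing care, is to show that $X=\tfrac12\dd_1\eta(X)+\dd_1\sigma^c(X)$ once the splitting is used. Decompose $X=X_0+X_1$ with $X_0\in\u(1)$ and $X_1\in\ker\dd_1\eta$. On the central summand, $\dd_1\eta|_{\u(1)}=2$ because $\eta$ restricts to squaring, and $\dd_1\sigma^c|_{\u(1)}=0$ because $\U(1)=\ker\sigma^c$; hence $\tfrac12\dd_1\eta(X_0)=X_0$ and the $\g$-term vanishes. On the other summand, $\dd_1\eta(X_1)=0$ by definition, and $\dd_1\sigma^c$ maps $\ker\dd_1\eta$ isomorphically onto $\g$. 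This is the identification used in the splitting, so under it $\dd_1\sigma^c(X_1)$ corresponds to $X_1$. Adding the two pieces gives $X$.

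The main obstacle is interpretive rather than computational. The identity only makes sense once the two summands are embedded in $\g^c$ through the $\eta$-splitting, with $\g$ identified with $\ker\dd_1\eta$ rather than with some other complement of $\u(1)$. The factor $\tfrac12$ is exactly what undoes $\dd_1\eta|_{\u(1)}=2$.
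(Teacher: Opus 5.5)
Your proof is correct and follows essentially the same route as the paper. The paper uses $\eta\circ\lambda_{g^{-1}}=\lambda_{\eta(g)^{-1}}\circ\eta$ and the chain rule to obtain $\eta^\ast\Theta_{\U(1)}=\dd_1\eta\circ\Theta_{G^c}$ and $(\sigma^c)^\ast\Theta_G=\dd_1\sigma^c\circ\Theta_{G^c}$ at every point, which is the same content as your reduction to the identity via left-invariance; like you, it then concludes by decomposing $\Theta_{G^c}$ along the $\eta$-splitting $\g^c=\u(1)\oplus\ker\dd_1\eta$.
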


\begin{proof}
Let $\lambda_g:G^c\to G^c$ denote left multiplication by $g\in G^c$. From
$\eta(g^{-1}h)=\eta(g^{-1})\eta(h)$ we get
$\lambda_{\eta(g)^{-1}}\circ\eta=\eta\circ\lambda_{g^{-1}}$, so
\[(\eta^\ast\Theta_{\U(1)})(g) = \dd_{\eta(g)}\lambda_{\eta(g)^{-1}}\circ \dd_g \eta
  = \dd_1\eta\circ \dd_g\lambda_{g^{-1}} = (\dd_1\eta\circ\Theta_{G^c})(g)
  = 2\big(\tfrac12\dd_1\eta\big)\circ\Theta_{G^c}(g) .\]
The first and third equalities are the definitions of pullback and of Maurer--Cartan forms, the
second equality is the chain rule for differentiation at each $g\in G^c$, and the last equality
expresses $\dd_1\eta$ in terms of the
projection $\tfrac12\dd_1\eta:\g^c\to\u(1)$. Similarly
$(\sigma^c)^\ast\Theta_G=\dd_1\sigma^c\circ\Theta_{G^c}$, with $\dd_1\sigma^c$ the differential
of $\sigma^c$ at the identity and the projection $\g^c\to\g$. Then $\Theta_{G^c}$ is the sum of
the projections to each summand.
\end{proof}

Central connections are objects on principal $G^c$-bundles whose evaluation on vertical
directions is modelled by the projection to the central direction
$\tfrac12\,\eta^\ast\Theta_{\U(1)}$ (for closely related notions, see
$\mu$-pseudoconnection \cite[Def.~1.12]{Hess}, metaplectic-c prequantization
\cite[Def.~3.2]{VaughanQ}, splittings of a connective structure
\cite[Def.~3.5, (6)]{Gomi}, compatible connection and scalar curvature
\cite[\S2, (2.8)]{Waldorf}).

\begin{definition}[Central connection and curvature]\label{def:centralconnection}
Let $(\rho^c:P^c \to M, R^c)$ be a principal $G^c$-bundle over $M$. A $\u(1)$-valued $1$-form
$\gamma \in \Omega^1(P^c; \u(1))$ is a \textbf{central connection} if
\begin{enumerate}
\item $\gamma \circ \dd R^c_{g} = \Ad(g^{-1}) \circ \gamma$ for all $g \in G^c$; and
\item $\gamma(X_{\bxi}) = \tfrac12\dd_1\eta(\bxi)$ for all $\bxi \in \g^c$, where
$X_{\bxi}(p) = \tfrac{\dd}{\dd t}\big|_{t = 0} R^c_{\exp (t\bxi)}(p)$.
\end{enumerate}
The $\u(1)$-valued $2$-form
$\dd\gamma + \tfrac{1}{2}[\gamma, \gamma] = \dd\gamma$ is basic and descends to $M$. A $2$-form
$\omega$ on $M$ is the \textbf{curvature form} of $\gamma$ if
$\dd\gamma = i (\rho^c)^\ast \omega$, and we denote $\curv\gamma:=\omega$; the identification
$\u(1)\cong i\R$ is used here once and for all.
\end{definition}

Two principal $G^c$-bundles with central connections are \emph{isomorphic} if an isomorphism of
principal $G^c$-bundles pulls one central connection back to the other. Let
$\mathbf P(G^c;\u(1))$ denote the groupoid whose objects are the principal $G^c$-bundles with
central connection and whose morphisms are these isomorphisms.

Taking curvature forms is a functor
$\curv:\mathbf P(G^c;\u(1))\to Z_{\dR}^2(M)$ to the discrete groupoid of closed $2$-forms.
Since a connection-preserving isomorphism $\phi$ satisfies $\phi^\ast\gamma'=\gamma$,
$(\rho^c)^\ast\omega=(\rho^c)^\ast\omega'$, and since $\rho^c$ is a surjective submersion,
$\omega=\omega'$. Thus $\curv$ maps isomorphisms to identity morphisms.

For a principal $G$-bundle $P$ and a closed $2$-form $\omega$ on $M$, let
$\mathbf L(P,\sigma^c,\omega)$ denote the groupoid of $\sigma^c$-liftings of $P$, in the sense
of Definition~\ref{def:lifting}, whose central connection has curvature form $\omega$; its
morphisms are the isomorphisms of Definition~\ref{def:lifting}, that is, the
connection-preserving isomorphisms covering the identity of $P$.

The transformation law for a central connection resembles that for principal circle connections
\eqref{eq:circlelocal}. Let $(P^c,\gamma)$ be a principal $G^c$-bundle with central connection,
and let $s:U\to P^c$ and $g:U\to G^c$ on an open $U\subseteq M$. Then by the Leibniz rule
\cite[Ch.~I, Prop.~1.4]{KobayashiNomizu},
\begin{align*}
  \dd(s \cdot g) = \dd\big(R^c \circ (s,g)\big)
  &= \dd R^c\circ(\dd s,0) + \dd R^c\circ(0,\dd g) \\
  &= \dd_s R^c_g \circ \dd s + X_{\Theta_{G^c}(\dd g)}(s \cdot g) ,
\end{align*}
where the second term is the fundamental vector field of the principal action. From its defining
conditions in Definition~\ref{def:centralconnection}, the central connection evaluates to
\begin{align*}
  \gamma\big(\dd_s R^c_g \circ \dd s\big)
  &= \Ad(g^{-1})\circ s^\ast\gamma = s^\ast \gamma , \\
  \gamma\big(X_{\Theta_{G^c}(\dd g)}(s \cdot g)\big)
  &= \tfrac12\,\dd_1\eta\big(\Theta_{G^c}(\dd g)\big)
   = \tfrac12\,g^\ast \eta^\ast \Theta_{\U(1)}
   = \tfrac12\,\eta(g)^{-1}\dd\,\eta(g)
   = \tfrac12\,\dd\log \eta(g) ,
\end{align*}
on each term, by the centrality of $\u(1)$ in $\g^c$ and
Lemma~\ref{lem:sumMC}. So the transformation law for central connections is given by
\begin{equation} \label{eq:centralconnectionlocal}
  (s \cdot g)^\ast \gamma = \Ad(g^{-1})\circ s^\ast \gamma
     + \tfrac12\,\dd_1\eta\big(\Theta_{G^c}(\dd g)\big)
  = s^\ast \gamma + \tfrac12\,\dd\log \eta(g).
\end{equation}
So given a principal $G^c$-bundle with central connection, a choice of local sections $s_j$ on
$U_j$ with $s_k=s_j\cdot g^c_{jk}$ on $U_{jk}$ defines descent data
$(\{\gamma_j\},\{g^c_{jk}\})$ satisfying
\begin{align}
  g^c_{jk}\,g^c_{k\ell}\,(g^c_{j\ell})^{-1}&=1
    &&\text{ on } U_{jk\ell}, \label{eq:transitionmaps} \\
  \gamma_k-\gamma_j-\tfrac12\dd\log\eta(g^c_{jk})&=0
    &&\text{ on } U_{jk}, \label{eq:centralconnection}
\end{align}
where $\gamma_j=s_j^\ast\gamma$. Conversely, given descent data
$(\{\gamma_j\},\{g^c_{jk}\})$ satisfying \eqref{eq:transitionmaps} and
\eqref{eq:centralconnection}, we can construct a principal $G^c$-bundle with transition
functions $\{g^c_{jk}\}$ and a central connection $\gamma$ having $\{\gamma_j\}$ as its local
connection forms.

\subsection{The lifting problem with prescribed central curvature}
\label{subsec:prescribedcurv}

Let $\sigma:\tilde G\to G$ be a Lie group double covering, and let $M$ be a smooth manifold.
Consider the sequence of functors of groupoids
\begin{multline}\label{eq:bundleseq}
  \mathbf P(\mu_2)
  \xrightarrow{\ (\times_{\mu_2}\tilde G,\ \times_{\mu_2}\U(1))\ }
  \mathbf P(\tilde G)\times\mathbf P_0(\U(1);\u(1))
  \xrightarrow{\ \times_M/\Delta\ }\\
  \mathbf P(G^c;\u(1))
  \xrightarrow{\ (w(\,\cdot\,/\U(1),\sigma),\ \curv)\ }
  H^2(M;\Z_2)\oplus Z^2_{\dR}(M).
\end{multline}
Here $(\times_{\mu_2}\tilde G,\ \times_{\mu_2}\U(1))$ is a pair of extensions of structure
groups by $\mu_2\cong\ker\sigma\hookrightarrow\tilde G$ and $\mu_2\hookrightarrow\U(1)$,
\[ T \longmapsto\big(T\times_{\mu_2}\tilde G,\ T\times_{\mu_2}\U(1)\big). \]
The functor $\times_M/\Delta$ is the central product along the antidiagonal
$\mu_2\cong\Delta$,
\[ (\tilde P,Y)\longmapsto \big(\tilde P\times_M Y\big)/\Delta. \]
The bundle $\big(\tilde P\times_M Y\big)/\Delta$ is a principal $G^c$-bundle with a central
connection induced from the principal connection of $Y$. The third functor is
\[ (P^c,\gamma)\longmapsto\big(w(P^c/\U(1),\sigma),\ \curv\gamma\big), \]
where $w(P,\sigma)\in H^2(M;\Z_2)$ is the obstruction class of \eqref{eq:classicalseq} for
$\mathcal C=\underline{\Z_2}$, and $\curv\gamma$ is the central curvature of $\gamma$.

We now construct a short exact sequence of complexes of sheaves analogous to
\eqref{eq:truncationses}, that encodes \eqref{eq:transitionmaps} and
\eqref{eq:centralconnection} into its total differential. Let $\mathcal G$,
$\tilde{\mathcal G}$ and $\mathcal G^c$ be the sheaves of smooth $G$-, $\tilde G$- and
$G^c$-valued functions on $M$. Let
\begin{align}
  \Delta^\bullet
    &:= \ \underline{\Delta}\ \to\ 0\ \to\ i\Omega^2 \xrightarrow{\dd}\cdots
        \xrightarrow{\dd} i\Omega^m \to 0, \label{eq:Deltacomplex}\\
  (\tilde{\mathcal G}\times\Usm)^\bullet
    &:= \ \tilde{\mathcal G}\times\Usm \xrightarrow{\frac12\dd\log\eta'} i\Omega^1
        \xrightarrow{\dd} i\Omega^2 \xrightarrow{\dd}\cdots \xrightarrow{\dd} i\Omega^m \to 0,
        \nonumber\\
  (\mathcal G^c)^\bullet
    &:= \ \mathcal{G}^c \xrightarrow{\frac12\dd\log\eta} i\Omega^1 \to 0,
        \nonumber
\end{align}
where $\eta'(\tilde g,\xi)=\xi^2$, so that $\tfrac12\dd\log\eta'=\dd\log\xi$.

This gives a short exact sequence of complexes of sheaves,
\begin{equation}\label{eq:Deltases}
  1 \to \Delta^\bullet \to (\tilde{\mathcal G}\times\Usm)^\bullet
    \to (\mathcal G^c)^\bullet \to 1 .
\end{equation}
We compute the hypercohomology groupoids in the following statement
(Figures~\ref{fig:Ecomplex} and~\ref{fig:Deltacomplex}), and identify them with the sequence
\eqref{eq:bundleseq}.
\begin{figure}[htbp]
\centering
\begin{tikzpicture}[>=stealth,font=\footnotesize]
  \def\dx{4.1}\def\dy{1.9}
  \node (e02) at (0,2*\dy)      {\phantom{$\dd\gamma$}};
  \node (e01) at (0,\dy)        {$\gamma$};
  \node (e11) at (\dx,\dy)      {$\check\delta\gamma-\tfrac12\dd\log\eta(g^c)$};
  \node (e10) at (\dx,0)        {$g^c$};
  \node (e20) at (2*\dx,0)      {$\check\delta g^c$};
  \draw[->] (e01) -- node[above,font=\scriptsize,pos=0.20,fill=white,inner sep=1.5pt]{$\check\delta$} (e11);
  \draw[->] (e10) -- node[above,font=\scriptsize,pos=0.28,fill=white,inner sep=1.5pt]{$\check\delta$} (e20);
  \draw[->,densely dotted] (e01) -- node[left,font=\scriptsize,fill=white,inner sep=1.5pt,pos=0.28]{$0$} (e02);
  \draw[->] (e10) -- node[right,font=\scriptsize,pos=0.72,fill=white,inner sep=1.5pt]{$\tfrac12\dd\log\eta$} (e11);
  \begin{scope}[on background layer]
    \node[fill=black!16,rounded corners,fit=(e02),inner sep=5pt](ebox){};
    \node[fill=black!6,rounded corners,fit=(e11),inner sep=5pt]{};
    \node[fill=black!6,rounded corners,fit=(e20),inner sep=5pt]{};
  \end{scope}
  \node[anchor=west,font=\scriptsize,align=left] at ($(ebox.east)+(0.35,0)$)
    {empty condition in $(\mathcal G^c)^\bullet$;\\ the curvature is unconstrained};
  \node[font=\scriptsize,anchor=north west] at ($(e01.south)+(-0.30,-0.12)$) {central connection forms};
  \node[font=\scriptsize,anchor=north] at ($(e10.south)+(0,-0.12)$) {transition functions};
  \node[font=\scriptsize,anchor=north] at ($(e20.south)+(0,-0.12)$) {bundle gluing};
  \node[font=\scriptsize,anchor=west,align=left] at ($(e11.east)+(0.30,0)$)
    {transformation law for\\ central connection forms};
  \draw[-] (-1.0,-0.95) -- (2*\dx+1.0,-0.95);
  \draw[-] (-1.0,-0.95) -- (-1.0,2*\dy+0.7);
  \foreach \i/\lab in {0/{C^0},1/{C^1},2/{C^2}}
     \node[font=\scriptsize] at (\i*\dx,-1.3) {$\lab(\mathfrak U;-)$};
  \foreach \j/\lab in {0/{\mathcal G^c},1/{i\Omega^1},2/{0}}
     \node[font=\scriptsize,anchor=east] at (-1.15,\j*\dy) {$\lab$};
  \draw[dashed,thick] (-0.20*\dx,1.70*\dy) -- (1.70*\dx,-0.20*\dy);
\end{tikzpicture}
\caption{The nonabelian analogue of Figure~\ref{fig:doublecomplex}.}
\label{fig:Ecomplex}
\end{figure}

\begin{figure}[htbp]
\centering
\begin{tikzpicture}[>=stealth,font=\footnotesize]
  \def\dx{3.1}\def\dy{1.35}
  \node (d03) at (0,3*\dy)      {$i\,\dd\omega$};
  \node (d02) at (0,2*\dy)      {$i\,\omega$};
  \node (d12) at (\dx,2*\dy)    {$i\,\check\delta\omega$};
  \node (d20) at (2*\dx,0)      {$\varepsilon$};
  \node (d30) at (3*\dx,0)      {$\check\delta\varepsilon$};
  \draw[->] (d02) -- node[left,font=\scriptsize,fill=white,inner sep=1.5pt,pos=0.4]{$\dd$} (d03);
  \draw[->] (d02) -- node[above,font=\scriptsize,pos=0.30,fill=white,inner sep=1.5pt]{$\check\delta$} (d12);
  \draw[->] (d20) -- node[above,font=\scriptsize,pos=0.30,fill=white,inner sep=1.5pt]{$\check\delta$} (d30);
  \begin{scope}[on background layer]
    \node[fill=black!6,rounded corners,fit=(d03),inner sep=5pt]{};
    \node[fill=black!6,rounded corners,fit=(d12),inner sep=5pt]{};
    \node[fill=black!6,rounded corners,fit=(d30),inner sep=5pt]{};
  \end{scope}
  \node[font=\scriptsize,anchor=north west] at ($(d02.south)+(-0.30,-0.12)$) {local $2$-forms};
  \node[font=\scriptsize,anchor=north] at ($(d20.south)+(0,-0.12)$) {Stiefel--Whitney cochain};
  \node[font=\scriptsize,anchor=west] at ($(d03.east)+(0.30,0)$) {closedness};
  \node[font=\scriptsize,anchor=west] at ($(d12.east)+(0.30,0)$) {gluing};
  \node[font=\scriptsize,anchor=south] at ($(d30.north)+(0,0.14)$) {cocycle};
  \draw[-] (-1.0,-0.95) -- (3*\dx+1.0,-0.95);
  \draw[-] (-1.0,-0.95) -- (-1.0,3*\dy+0.7);
  \foreach \i/\lab in {0/{C^0},1/{C^1},2/{C^2},3/{C^3}}
     \node[font=\scriptsize] at (\i*\dx,-1.3) {$\lab(\mathfrak U;-)$};
  \foreach \j/\lab in {0/{\underline\Delta},1/{0},2/{i\Omega^2},3/{i\Omega^3}}
     \node[font=\scriptsize,anchor=east] at (-1.15,\j*\dy) {$\lab$};
  \draw[dashed,thick] (-0.20*\dx,2.70*\dy) -- (2.70*\dx,-0.20*\dy);
\end{tikzpicture}
\caption{The nonabelian analogue of Figure~\ref{fig:Tcomplex}. Because
$\Delta^\bullet$ retains $\underline\Delta$ in degree zero, a closed total degree $2$
cochain has two independent components, and the degree two cohomology of the total complex is
$H^2(M;\Z_2)\oplus i\,Z^2_{\dR}(M)$.}
\label{fig:Deltacomplex}
\end{figure}

\begin{lemma}\label{lem:gpdLES}
Let $M$ be a smooth manifold, let $\sigma:\tilde G\to G$ be a Lie group double covering, and
let $\sigma^c:G^c\to G$ be the associated circle extension \eqref{eq:centralext}.

There is a diagram of functors of groupoids
\[
\begin{tikzcd}[row sep=3em, column sep=2.6em]
\mathbf{H}^1(\Delta^\bullet)
  \ar[r] \ar[d,"\simeq"']
&
\mathbf{H}^1\big((\tilde{\mathcal G}\times\Usm)^\bullet\big)
  \ar[r] \ar[d,"\simeq"']
&
\mathbf{H}^1\big((\mathcal G^c)^\bullet\big)
  \ar[r,"\partial"] \ar[d,"\simeq"']
&
\HH^2(\Delta^\bullet)
  \ar[d,"\cong"]
\\
\mathbf P(\mu_2)
  \ar[r]
&
\mathbf P(\tilde G)\times\mathbf P_0(\U(1);\u(1))
  \ar[r]
&
\mathbf P(G^c;\u(1))
  \ar[r]
&
H^2(M;\Z_2)\oplus Z^2_{\dR}(M).
\end{tikzcd}
\]
such that the bottom row is \eqref{eq:bundleseq}, the first three vertical arrows are
equivalences of groupoids given by gluing transition functions and local connection forms, the
first two squares commute up to canonical natural isomorphism, and the last square commutes
strictly. The groups $\HH^2(\Delta^\bullet)$ and $H^2(M;\Z_2)\oplus Z^2_{\dR}(M)$ are viewed
as discrete groupoids, and are identified up to isomorphism with a choice of $\pm i$.
\end{lemma}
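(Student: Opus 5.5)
By Proposition~\ref{prop:goodcover} it suffices to work on a fixed good cover $\mathfrak U$, with $\mathbf H^1_{\mathfrak U}$ in place of each $\mathbf H^1$. The vertical equivalences are built by gluing local data, exactly as in the identification \eqref{eq:kwidentification}. The first step is to unwind the cocycle condition $D^1(\gamma,g)=(0,0,1)$ of \eqref{eq:D1} for each of the three complexes.

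For $\Delta^\bullet$ the degree-one term is $0$, so an object is just a \v{C}ech cocycle $\varepsilon\in\check C^1(\mathfrak U;\underline\Delta)$, and a morphism is a $0$-cochain with values in $\Delta$. This is precisely $\mathbf P(\mu_2)$, presented by transition functions on a good cover. For $(\tilde{\mathcal G}\times\Usm)^\bullet$ an object is a triple $(\gamma,\tilde g,\xi)$ with
\begin{itemize}
\item $\dd\gamma=0$;
\item $\check\delta\gamma=\dd\log\xi$;
\item $\tilde g$ and $\xi$ separately \v{C}ech cocycles.
\end{itemize}
By the computation in \eqref{eq:circlelocal}, this is a principal $\tilde G$-bundle together with a flat circle bundle, i.e.\ an object of $\mathbf P(\tilde G)\times\mathbf P_0(\U(1);\u(1))$. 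For $(\mathcal G^c)^\bullet$ the conditions are exactly \eqref{eq:transitionmaps} and \eqref{eq:centralconnection}. The descent discussion following \eqref{eq:centralconnectionlocal} then gives essential surjectivity onto $\mathbf P(G^c;\u(1))$. Full faithfulness follows because a $0$-cochain $t$ with $D^0(t,(\gamma,g))=(\gamma',g')$ is the same as a bundle isomorphism intertwining the chosen local sections and pulling back the central connection. For the middle and right functors I would then check that the cochain-level maps induced by the sheaf maps in \eqref{eq:Deltases} match the constructions $T\mapsto(T\times_{\mu_2}\tilde G,T\times_{\mu_2}\U(1))$ and $(\tilde P,Y)\mapsto(\tilde P\times_MY)/\Delta$ after gluing. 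These agree only after a choice of local sections of the glued bundles, so the squares commute up to the canonical natural isomorphism furnished by re-trivialization, i.e.\ the comparison cochains $h$ of Lemma~\ref{lem:refinementequiv}.

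The substantive step is the connecting map $\partial$ and the last square. Let $(\gamma,g^c)$ be a cocycle for $(\mathcal G^c)^\bullet$. On a good cover I lift $g^c_{jk}=[\tilde g_{jk},\xi_{jk}]$ to $(\tilde g_{jk},\xi_{jk})$, which is possible on contractible double overlaps. I keep the same $\gamma$, using that $\tfrac12\dd\log\eta'=\dd\log\xi$ agrees with $\tfrac12\dd\log\eta(g^c)$. I then apply the total differential of the middle complex. Its components are:
\begin{itemize}
\item in $C^0(i\Omega^2)$, the form $\dd\gamma_j=i(\text{local curvature})$;
\item in $C^1(i\Omega^1)$, the term $\check\delta\gamma-\dd\log\xi$, which is $0$;
\item in $C^2$, the cochain $(\check\delta\tilde g,\check\delta\xi)$, which lies in the antidiagonal $\Delta$ and is the standard Stiefel--Whitney cochain $\varepsilon$ of $P^c/\U(1)=P$ for $\sigma$.
\end{itemize}
The local forms $\dd\gamma_j$ glue to $i\omega$ with $\omega=\curv\gamma$. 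Hence $\partial(\gamma,g^c)=(\varepsilon,i\omega)$ in $\HH^2(\Delta^\bullet)$.

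To establish $\HH^2(\Delta^\bullet)\cong H^2(M;\Z_2)\oplus Z^2_{\dR}(M)$ I would argue as in Figure~\ref{fig:Deltacomplex}. The complex $\Delta^\bullet$ splits as the direct sum of $\underline\Delta[0]$ and the truncated de Rham piece, whose second hypercohomology is the closed $2$-forms as in \circled{3}. Identifying $i\Omega^2$ with real forms is where the choice of $\pm i$ enters. The last square then commutes strictly: both paths are computed from the same cochains, and independence of the lift is the standard well-definedness of the connecting map.

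I expect the main obstacle to be exactly this well-definedness in the nonabelian setting. Changing the lift by a $\Delta$-valued $1$-cochain, or acting by a morphism $t$, must change $(\varepsilon,i\omega)$ by a $\check{\mathrm d}$-coboundary. This uses that $\Delta$ is central, and I would verify it by hand using the identity for $t_j(\check\delta g)t_j^{-1}$ recorded after \eqref{eq:D0}.
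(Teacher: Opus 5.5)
Your proposal is correct and follows essentially the same route as the paper. You reduce to a fixed good cover via Proposition~\ref{prop:goodcover}, read the three cocycle groupoids as descent data for the bundles in \eqref{eq:bundleseq}, and compute $\partial$ by lifting $g^c$ to $(\tilde g,\xi)$, so that $D^1(\gamma,(\tilde g,\xi))=(\dd\gamma,0,\varepsilon)$. Your explicit check that $\partial$ is constant on isomorphism classes (using centrality of $\Delta$), and your direct-sum splitting of $\Delta^\bullet$ into $\underline\Delta[0]$ and the truncated de Rham summand, spell out two points that the paper only asserts.
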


\begin{proof}
Fix a good cover $\mathfrak U=\{U_j\}$ of $M$. We establish the diagram's equivalence by
evaluating the hypercohomology groupoids on $\mathfrak U$.

The degree-one cocycles of the three complexes on $\mathfrak U$ are exactly the descent data
defining the respective principal bundles.

For $\Delta^\bullet$, a cocycle consists of locally constant functions
$\varepsilon_{jk}:U_{jk}\to\Delta$ satisfying
$\varepsilon_{jk}\varepsilon_{k\ell}=\varepsilon_{j\ell}$, which define a principal
$\mu_2$-bundle.

For $(\tilde{\mathcal G}\times\Usm)^\bullet$, a cocycle is a tuple
$(\gamma_j,(\tilde g_{jk},\xi_{jk}))$ satisfying $\tilde g_{jk}\tilde g_{k\ell}=\tilde
g_{j\ell}$, $\xi_{jk}\xi_{k\ell}=\xi_{j\ell}$, $\gamma_k-\gamma_j=\dd\log\xi_{jk}$, and
$\dd\gamma_j=0$. These define a principal $\tilde G$-bundle alongside a principal
$\U(1)$-bundle equipped with a flat connection.

For $(\mathcal G^c)^\bullet$, a cocycle is a pair $(\gamma_j,g^c_{jk})$ satisfying
$g^c_{jk}g^c_{k\ell}=g^c_{j\ell}$ and $\gamma_k-\gamma_j=\tfrac12\dd\log\eta(g^c_{jk})$. These
define a principal $G^c$-bundle with a central connection.

Because local isomorphisms that agree on overlaps glue uniquely to global bundle isomorphisms,
the groupoids of \v{C}ech cocycles on $\mathfrak U$ are equivalent to the groupoids of global
principal bundles. Since $\mathfrak U$ is a good cover, Proposition~\ref{prop:goodcover}
identifies these cocycle groupoids with the hypercohomology pseudo-colimits. The gluing
functors representing the first three vertical arrows are therefore equivalences of groupoids.
Extending the structure group along $\Delta\hookrightarrow\tilde G\times\U(1)$ and taking the
central quotient to $G^c$ map these cocycle conditions along the chain maps of
\eqref{eq:Deltases}, so the first two squares commute up to canonical natural isomorphism.

For the last square, the fourth vertical arrow is the canonical isomorphism
$\HH^2(\Delta^\bullet)\cong H^2(M;\Z_2)\oplus i\,Z^2_{\dR}(M)$. Let $(P^c,\gamma)$ be a
principal $G^c$-bundle with central connection, represented by descent data
$(\gamma_j,g^c_{jk})$ on $\mathfrak U$. Choose smooth local lifts $(\tilde g_{jk},\xi_{jk})$ in
$\tilde G\times\U(1)$. Their failure to form a strict cocycle is measured by
\[
  \varepsilon_{jk\ell}
  :=\big(\tilde g_{jk}\tilde g_{k\ell}\tilde g_{j\ell}^{-1},\,
  \xi_{jk}\xi_{k\ell}\xi_{j\ell}^{-1}\big)\in\Delta,
\]
which is a \v{C}ech $2$-cocycle since $\Delta$ is central. Because
$\tfrac12\dd\log\eta([\tilde g_{jk},\xi_{jk}])=\dd\log\xi_{jk}$, the differential of this
lifted cochain evaluates to
\[
  D^1\big(\gamma,(\tilde g,\xi)\big)=(\dd\gamma,0,\varepsilon).
\]
Under $\Delta\cong\ker\sigma$, the first component of $\varepsilon$ is formed by lifting the
transition functions of $P^c/\U(1)$, so it represents $w(P^c/\U(1),\sigma)\in H^2(M;\Z_2)$. The
forms $\dd\gamma_j$ glue to $i\,\curv\gamma$, representing $\curv\gamma\in Z^2_{\dR}(M)$. The
connecting class therefore maps to $\big(w(P^c/\U(1),\sigma),\,\curv\gamma\big)$, and the last
square commutes strictly.
\end{proof}

As in differential cohomology \cite[(1.5)]{Freed}, the second Stiefel--Whitney class
$w(P,\sigma)$ of a principal $G$-bundle $P$ in
$H^2(M;\Z_2)$ can be compared with a closed $2$-form $\omega\in Z^2_{\dR}(M)$ in
$H^2(M;\Uloc)$ using the maps $(-1)^\cdot:H^2(M;\Z_2)\to H^2(M;\Uloc)$ and
$e^{i\int}:Z^2_{\dR}(M)\to H^2(M;\Uloc)$ of \eqref{eq:kostantweil}, through the square
\begin{equation}\label{eq:swcurvsquare}
\begin{tikzcd}[row sep=3em, column sep=3.4em]
\mathbf P(G^c;\u(1)) \ar[r, "\curv"] \ar[d, "{\cdot/\U(1)}"'] &
  Z_{\dR}^2(M) \ar[d, "e^{i\int}"] \\
\mathbf P(G)\ar[r, "{(-1)^{w(\,\cdot\,,\sigma)}}"'] & H^2(M;\Uloc)
\end{tikzcd}
\end{equation}
The map $(-1)^\cdot$ factors through the maps provided by the universal coefficient theorem,
\[ H^2(M; \Z_2) \twoheadrightarrow \Hom_\Z(H_2(M; \Z), \Z_2)
   \longrightarrow \Hom_\Z(H_2(M; \Z), \U(1)) \xrightarrow{\cong} H^2(M;\Uloc) , \]
so it is trivial on $\mathrm{Ext}\big(H_1(M;\Z),\Z_2\big)$.

\begin{lemma}\label{lem:u1agreement}
Let $M$ be a smooth manifold, let $\sigma:\tilde G\to G$ be a Lie group double covering, and
let $\sigma^c:G^c\to G$ be the associated circle extension \eqref{eq:centralext}. Let
$(P^c,\gamma)$ be a principal $G^c$-bundle with central connection, and let
$P=P^c/\U(1)$. Then
\[
  (-1)^{w(P,\sigma)}\cdot e^{i\int \curv\gamma}
\]
is the identity element of $H^2(M;\Uloc)$.
\end{lemma}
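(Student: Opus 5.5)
Since $H^2(M;\Uloc)\cong\Hom_\Z(H_2(M;\Z),\U(1))$, and both $(-1)^{w(P,\sigma)}$ and $e^{i\int\curv\gamma}$ are defined through this identification, I would check the identity on integral $2$-cycles. I would represent each class $S\in H_2(M;\Z)$ by a smooth map $f:\Sigma\to M$ from a closed oriented surface. By the naturality of Remark~\ref{rem:naturality}, the connecting map $\partial$ of Lemma~\ref{lem:gpdLES} commutes with $f^\ast$. So $f^\ast(P^c,\gamma)$ is a principal $G^c$-bundle with central connection on $\Sigma$. Its Stiefel--Whitney class is $f^\ast w(P,\sigma)$ and its curvature is $f^\ast\curv\gamma$. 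This reduces the lemma to showing
\[
  (-1)^{\langle w(P_\Sigma,\sigma),[\Sigma]\rangle}\,e^{i\int_\Sigma\curv\gamma_\Sigma}=1
\]
for a bundle with central connection over a closed oriented surface $\Sigma$.

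On $\Sigma$, I would pass to the circle bundle $Y_\eta:=P^c\times_\eta\U(1)=P^c/\tilde G$. The form $2\gamma$ satisfies $2\gamma(X_{\bxi})=\dd_1\eta(\bxi)$ and is $G^c$-invariant, since the adjoint action is trivial on $\u(1)$. Hence $2\gamma$ descends to a genuine principal $\U(1)$-connection $\gamma_\eta$ on $Y_\eta$ with curvature $2\curv\gamma$. Theorem~\ref{thm:kostantweil}(1) then gives $e^{2i\int_\Sigma\curv\gamma}=1$, so $e^{i\int_\Sigma\curv\gamma}=\pm1$. It remains to identify this sign with $(-1)^{\langle w,[\Sigma]\rangle}$.

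For the sign, I would work in \v{C}ech terms as in the proof of Lemma~\ref{lem:gpdLES}. The descent data $(\gamma_j,g^c_{jk})$ of $Y_\eta$ are $(2\gamma_j,\eta(g^c_{jk}))=(2\gamma_j,\xi_{jk}^2)$ for chosen lifts $g^c_{jk}=[\tilde g_{jk},\xi_{jk}]$. I would set $\theta_j:=\gamma_j$ and $c_{jk}:=\xi_{jk}$. These satisfy $\check\delta\theta=\dd\log c$, and $\check\delta c=\varepsilon$ is the $\mu_2$-valued cocycle representing $w(P,\sigma)$. Thus $(\theta,c)$ is a cochain in the Deligne complex $\mathcal D^\bullet$ whose total coboundary is $(\dd\theta,0,\varepsilon)$. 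I would then evaluate the standard Deligne/\v{C}ech formula for $e^{i\int_\Sigma}$ of a curvature form, using a triangulation subordinate to $\mathfrak U$. This writes $\exp(i\int_\Sigma\dd\theta)$ as a product of edge terms $e^{\int\theta}$ and vertex terms $c$. If $c$ were a cocycle these would cancel to $1$. Here the vertex contributions fail to cancel exactly by $\prod\varepsilon_{jk\ell}$ over the triangles, which is $(-1)^{\langle w,[\Sigma]\rangle}$. This yields the desired identity $e^{i\int_\Sigma\curv\gamma}=(-1)^{\langle w,[\Sigma]\rangle}$.

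An alternative for this last step would use the long exact sequence of the short exact sequence \eqref{eq:Deltases} composed with $\mathcal C^\bullet$. The class $(w,i\curv\gamma)\in\HH^2(\Delta^\bullet)$ lies in the image of $\partial$. The map $\HH^2(\Delta^\bullet)\to\HH^2$ of the de Rham resolution of $\Uloc$, induced by the inclusion $\Delta\cong\mu_2\subset\U(1)$ together with the identity on forms, sends $(w,\omega)\mapsto(-1)^w e^{i\int\omega}$. Exactness would then force the image to vanish. The main obstacle is precisely to verify this chain-map identification and its sign conventions, namely that $\varepsilon$ enters as $(-1)^w$ and the forms enter through $e^{i\int}$ as in \circled{3}--\circled{4} of \eqref{eq:kwidentification}. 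I would do this by directly comparing total coboundaries on a good cover, as above.
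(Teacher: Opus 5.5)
Your proposal is correct, and its core is the paper's proof. Lifting $g^c_{jk}$ to $(\tilde g_{jk},\xi_{jk})$ and forgetting $\tilde g$ gives a cochain $(\gamma,\xi)$ of the de Rham resolution $\mathcal C^\bullet$, not of $\mathcal D^\bullet$, which has no $\dd\gamma$ component; its total coboundary is $(i\,\curv\gamma,0,(-1)^{\varepsilon})$, and the paper reads off the class $e^{i\int\curv\gamma}\cdot(-1)^{w(P,\sigma)}$ of this coboundary through the \v{C}ech--de Rham isomorphism of \eqref{eq:kwidentification}. That is your ``alternative'' carried out at cochain level, where no $\HH^2$ of the nonabelian middle complex is needed. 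Your reduction to surfaces, the $Y_\eta$ step and the triangulation bookkeeping only unpack that isomorphism and can be dropped. The sign conventions you flag as the main obstacle are harmless: $(-1)^{w(P,\sigma)}$ is its own inverse, so the conclusion is the same whichever sign the identification carries.
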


\begin{proof}
Let $P=P^c/\U(1)$ and $\omega=\curv\gamma$. For a choice of local sections on a good cover
$\mathfrak U$, there are local connection forms $\gamma_j$ and transition functions
$g^c_{jk}$. By choosing representatives at each value of $g^c_{jk}$ we obtain
$(\tilde g_{jk},\xi_{jk})$ in $\check C^1(\mathfrak U;\tilde{\mathcal G}\times\Usm)$. The
coboundary $\check\delta(\tilde g,\xi)$ projects to the trivial coboundary
$\check\delta g^c=1$, so it takes values in the kernel $\Delta$ of
$\tilde G\times\U(1)\to G^c$. The two components agree,
\[
  \tilde g_{jk}\tilde g_{k\ell}\tilde g_{j\ell}^{-1}
  =\xi_{jk}\xi_{k\ell}\xi_{j\ell}^{-1}
  =(-1)^{\varepsilon_{jk\ell}} ,
\]
and define a cocycle $\varepsilon\in\check C^2(\mathfrak U;\underline{\Z_2})$. Since
$\sigma^c[\tilde g,\xi]=\sigma(\tilde g)$, the $\tilde g_{jk}$ lift the transition functions of
$P$, so $\varepsilon$ represents $w(P,\sigma)$.

On the other hand, forgetting $\tilde g$, we obtain a total degree one cochain $(\gamma,\xi)$
in $\mathcal C^\bullet$. By \eqref{eq:circlelocal} its coboundary is
\[
  D^1(\gamma,\xi)
  =\big(\dd\gamma,\ \gamma_k-\gamma_j-\dd\log\xi_{jk},\ \check\delta\xi\big)
  =\big(i\,\omega,\,0,\,(-1)^{\varepsilon}\big).
\]
The second component vanishes because $\gamma$ is a central connection
\eqref{eq:centralconnection}, and the third was computed above.

The class of $\big(i\,\omega,0,(-1)^\varepsilon\big)$ in $\HH^2(\mathcal C^\bullet)$ maps to
$e^{i\int \omega}\cdot(-1)^{w(P,\sigma)}$ under the \v{C}ech--de Rham isomorphism
$\HH^2(\mathcal C^\bullet)\cong H^2(M;\Uloc)$ of \eqref{eq:kwidentification}. Since it is a
coboundary, it is trivial, so $(-1)^{w(P,\sigma)}\cdot e^{i\int \omega}$ is the identity
element.
\end{proof}

For a principal $G$-bundle $P$ and a closed $2$-form $\omega$, let
\begin{equation}\label{eq:wclassdef}
  w(P,\sigma^c,\omega)\;:=\;(-1)^{w(P,\sigma)}\cdot e^{i\int \omega}\;\in\;H^2(M;\Uloc)
\end{equation}
as in \eqref{eq:mainclass}. By Lemma~\ref{lem:u1agreement} the composite
\[ \mathbf P(G^c;\u(1))\xrightarrow{\ (\,\cdot\,/\U(1),\,\curv)\ }
   \mathbf P(G)\times Z^2_{\dR}(M)
   \xrightarrow{\ w(\,\cdot\,,\sigma^c,\,\cdot\,)\ }H^2(M;\Uloc) \]
is constant at the identity element. Equivalently, $w(P,\sigma^c,\omega)$ is trivial if
$(P,\omega)$ admits a $\sigma^c$-lifting with central connection $(P^c,\gamma)$ in the sense of
Definition~\ref{def:lifting}. Since the homotopy fibre of $(\,\cdot\,/\U(1),\curv)$ over
$(P,\omega)$ is the groupoid $\mathbf L(P,\sigma^c,\omega)$ of $\sigma^c$-liftings of $P$ with
central curvature $\omega$, this groupoid is nonempty only when $w(P,\sigma^c,\omega)$ is
trivial.

\begin{theorem}\label{thm:Pc}
Let $M$ be a smooth manifold, let $\sigma:\tilde G\to G$ be a Lie group double covering, and
let $\sigma^c:G^c\to G$ be the associated circle extension \eqref{eq:centralext}. Consider the sequence
\begin{equation}\label{eq:fibseq2}
  \mathbf P_0(\U(1);\u(1))\to\mathbf P(G^c;\u(1))
  \xrightarrow{(\,\cdot\,/\U(1),\,\curv)}\mathbf P(G)\times Z^2_{\dR}(M)
  \xrightarrow{w(\,\cdot\,,\sigma^c,\,\cdot\,)}H^2(M;\Uloc),
\end{equation}
where $Z^2_{\dR}(M)$ and $H^2(M;\Uloc)$ are viewed as discrete groupoids. Let $P$ be a
principal $G$-bundle and let $\omega$ be a closed $2$-form on $M$.
\begin{enumerate}[label=\textup{(\arabic*)}]
\item The sequence \eqref{eq:fibseq2} satisfies the exactness condition $(\mathrm{E}1)$ at
$\mathbf P(G)\times Z^2_{\dR}(M)$ for $w\in H^2(M;\Uloc)$ if and only if $w$ is the identity
element.

\item The sequence \eqref{eq:fibseq2} satisfies the exactness condition $(\mathrm{E}2)$ at
$\mathbf P(G^c;\u(1))$ for $(P,\omega)\in\mathbf P(G)\times Z^2_{\dR}(M)$ if and only if
$(P,\omega)$ is isomorphic to the trivial principal $G$-bundle with the zero form. In
particular, $\mathbf P_0(\U(1);\u(1))$ is equivalent as a groupoid to the homotopy fibre
$\mathbf L(M\times G,\sigma^c,0)$.

\item The homotopy fibre $\mathbf L(P,\sigma^c,\omega)$ of $(\,\cdot\,/\U(1),\curv)$ at
$(P,\omega)$ is nonempty if and only if $w(P,\sigma^c,\omega)$ is the identity element, and
all nonempty homotopy fibres of $(\,\cdot\,/\U(1),\curv)$ are equivalent as groupoids.
\end{enumerate}
\end{theorem}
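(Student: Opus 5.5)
Each part will be derived from Lemmas~\ref{lem:gpdLES} and \ref{lem:u1agreement} together with Proposition~\ref{prop:piexact}. We will work on a fixed good cover $\mathfrak U$ throughout, which Proposition~\ref{prop:goodcover} permits.

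For (3) and (1), necessity has already been established below \eqref{eq:wclassdef}: by Lemma~\ref{lem:u1agreement}, a lifting $(P^c,\gamma)$ with $P^c/\U(1)\cong P$ and $\curv\gamma=\omega$ forces $w(P,\sigma^c,\omega)$ to be trivial. For sufficiency, take $P$ with transition functions $g_{jk}$ on $\mathfrak U$ and choose lifts $\tilde g_{jk}\in\tilde G$. Their coboundary $\varepsilon$ represents $w(P,\sigma)$. Choose local primitives $\omega|_{U_j}=\dd\alpha_j$. The triviality of $(-1)^{\varepsilon}e^{i\int\omega}$ in $H^2(M;\Uloc)\cong\HH^2(\mathcal C^\bullet)$ says that the degree-two \v{C}ech--de Rham cocycle $(i\omega,0,(-1)^\varepsilon)$ is a total coboundary. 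Here we use the identification \circled{4} of \eqref{eq:kwidentification}, realized via Lemma~\ref{lem:u1agreement}'s computation. So there is $(\gamma,\xi)\in C^0(i\Omega^1)\oplus C^1(\Usm)$ with $\dd\gamma_j=i\omega$, $\gamma_k-\gamma_j=\dd\log\xi_{jk}$ and $\check\delta\xi=(-1)^\varepsilon$. Then $g^c_{jk}:=[\tilde g_{jk},\xi_{jk}]$ satisfies $\check\delta g^c=1$, since the defects lie in $\Delta$. Moreover $\tfrac12\dd\log\eta(g^c_{jk})=\dd\log\xi_{jk}$, so $(\gamma,g^c)$ satisfies \eqref{eq:transitionmaps} and \eqref{eq:centralconnection}. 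By the gluing in Lemma~\ref{lem:gpdLES}, this yields $(P^c,\gamma)$ with $P^c/\U(1)$ glued by $\sigma(\tilde g_{jk})=g_{jk}$, hence identified with $P$, and with curvature $\omega$.

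The delicate point in sufficiency is that the cochain $\varepsilon$ chosen from the $\tilde g$ must be matched exactly, not just in cohomology. If the coboundary produced matches $(-1)^{\varepsilon'}$ with $\varepsilon'=\varepsilon+\check\delta b$, we modify $\tilde g_{jk}$ by $b_{jk}\in\ker\sigma$, which does not change $g_{jk}$. This gives (3) nonemptiness. For (1), $(\mathrm{E}1)$ at the identity then reads: the natural transformation $\nu$ is trivial because the target is discrete and the composite is constant by Lemma~\ref{lem:u1agreement}. Essential surjectivity onto the homotopy fibre is exactly the existence just proved. Conversely, for $w$ nontrivial no $\nu$ exists unless the image is empty, and the image contains $(M\times G,0)$ whose class is trivial, so $(\mathrm{E}1)$ fails.

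For (2) and the equivalence part of (3), we show that for each lifting $(P^c_0,\gamma_0)\in\mathbf L(P,\sigma^c,\omega)$ the functor $\mathbf P_0(\U(1);\u(1))\to\mathbf L(P,\sigma^c,\omega)$, $(Y,\theta)\mapsto P^c_0\otimes Y$, is an equivalence. On cocycles this multiplies $g^c$ by the central $\xi$ and adds $\theta$ to $\gamma$. It is essentially surjective because two liftings of the same $(P,\omega)$, trivialized compatibly over $P$, differ by central $\U(1)$-valued cochains whose connection difference is closed. It is fully faithful because morphisms covering $\id_P$ are $\U(1)$-valued $0$-cochains $t$ with $\dd\log t$ matching.

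All nonempty homotopy fibres are thus equivalent to $\mathbf P_0(\U(1);\u(1))$. The fibre over the trivial pair $(M\times G,0)$ contains the trivial lift with zero connection, so the sequence satisfies $(\mathrm{E}2)$ there, with the functor full as just shown. For the converse direction of (2), note that $(\mathrm{E}2)$ requires $B\circ A$ to be naturally isomorphic to the constant functor at $(P,\omega)$. But $A$ of the trivial circle bundle maps to $(M\times G,0)$, so $(P,\omega)$ must be isomorphic to the trivial pair.
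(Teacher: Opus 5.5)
Your proposal is correct and follows essentially the same route as the paper. It uses Lemma~\ref{lem:u1agreement} for necessity, a \v{C}ech--de Rham construction of descent data $(\gamma,[\tilde g,\xi])$ on a good cover for existence, and, for (2) and (3), a comparison of liftings of $(P,\omega)$ through central $\U(1)$-valued differences forming a flat circle bundle; you build the functor $\mathbf P_0(\U(1);\u(1))\to\mathbf L(P,\sigma^c,\omega)$ by tensoring, while the paper builds its quasi-inverse by taking differences. The one genuine variation is that you get $\check\delta\xi=(-1)^{\varepsilon}$ exactly by writing the cocycle $(i\omega,0,(-1)^{\varepsilon})$ as a total coboundary, whereas the paper first runs the zigzag on $\omega$ and then corrects $\xi$ by a locally constant $u$, so your ``delicate point'' paragraph about modifying $\tilde g$ by $\ker\sigma$ is unnecessary.
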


\begin{proof}
\textup{(1)} By Lemma~\ref{lem:u1agreement} the composite
$w(\,\cdot\,,\sigma^c,\,\cdot\,)\circ(\,\cdot\,/\U(1),\curv)$ is constant at the identity
element of $H^2(M;\Uloc)$. Since $H^2(M;\Uloc)$ is discrete, a natural transformation from this
composite to the constant functor at $w$ exists if and only if $w$ is isomorphic, here equal,
to the identity element. So $(\mathrm{E}1)$ fails for every other $w$, and for the identity
element it remains to show that the induced functor
\[
  \mathbf P(G^c;\u(1))\longrightarrow
  \hofib{w(\,\cdot\,,\sigma^c,\,\cdot\,)}{\,\id\,}
\]
is essentially surjective. Since the objects of the homotopy fibre are the pairs $(P,\omega)$
with $w(P,\sigma^c,\omega)$ the identity element, it suffices to construct a $\sigma^c$-lifting
of $P$ with central curvature $\omega$ for each such pair.

We reverse the argument of Lemma~\ref{lem:u1agreement}. Take a good cover $\mathfrak U$ of $M$,
so that $(P,\omega)$ has descent data consisting of transition functions $g_{jk}$ with
$\check\delta g=1$ and the local $2$-forms $i\,\omega|_{U_j}$. Choose lifts
$\tilde g_{jk}:U_{jk}\to\tilde G$ of $g_{jk}$ on each $U_{jk}$. Their coboundary
$\check\delta\tilde g$ takes values in $\ker\sigma\cong\mu_2$, and represents
$(-1)^{w(P,\sigma)}$ in $H^2(M;\Uloc)$.

Applying the \v{C}ech--de Rham zigzag to $\omega$ gives a total degree one cochain
$(\gamma,\xi)$ of $\mathcal C^\bullet$ with
\[
  D^1(\gamma,\xi)
  =\big(\dd\gamma,\ \check\delta\gamma-\dd\log\xi,\ \check\delta\xi\big)
  =\big(i\,\omega,\ 0,\ \check\delta\xi\big):
\]
choose $\gamma_j\in i\Omega^1(U_j)$ with $\dd\gamma_j=i\,\omega|_{U_j}$, and then
$\xi_{jk}=e^{f_{jk}}$ for functions $f_{jk}$ with $\dd f_{jk}=\gamma_k-\gamma_j$, which exist
because $U_{jk}$ is contractible. The coboundary $\check\delta\xi$ is locally constant, since
$\dd\log\check\delta\xi=\check\delta\,\dd\log\xi=\check\delta\,\check\delta\gamma=0$ and the
$U_{jk\ell}$ are connected. Since $(i\,\omega,0,1)$ is itself a total degree two cocycle of
$\mathcal C^\bullet$, with class $e^{i\int \omega}$ by \eqref{eq:kwidentification}, and the
displayed triple is a coboundary, $\check\delta\xi$ represents $e^{-i\int \omega}$ in
$H^2(M;\Uloc)$.

Both $\check\delta\tilde g$ and $\check\delta\xi$ are locally constant, so their product is a
cocycle in $\check C^2(\mathfrak U;\Uloc)$ whose class is
$(-1)^{w(P,\sigma)}\cdot e^{-i\int \omega}=w(P,\sigma^c,\omega)^{-1}$ by
\eqref{eq:wclassdef}. Since $w(P,\sigma^c,\omega)$ is the identity element, so is its inverse,
and the product is a coboundary in $\Uloc$, so there is a locally constant
$u\in\check C^1(\mathfrak U;\Uloc)$ with
\[
  \check\delta\tilde g\cdot\check\delta\xi\cdot\check\delta u^{-1}=1 .
\]
Put $\xi':=\xi u^{-1}$. Then $\dd\log\xi'=\dd\log\xi$ because $u$ is locally constant, so
$(\gamma,\xi')$ still satisfies $\check\delta\gamma=\dd\log\xi'$, and
$\check\delta\xi'=\check\delta\tilde g$ because the elements of $\mu_2$ are their own inverses.
The two coboundaries of $(\tilde g,\xi')$ therefore agree in $\mu_2$, so
$\check\delta(\tilde g,\xi')$ takes values in the antidiagonal $\Delta$ and
$\check\delta[\tilde g,\xi']=1$ in $\mathcal G^c$. Since
$\tfrac12\dd\log\eta[\tilde g,\xi']=\dd\log\xi'$, the pair $\big(\gamma,[\tilde g,\xi']\big)$ is
a total degree one cocycle of $(\mathcal G^c)^\bullet$, and $\sigma^c[\tilde g,\xi']=g$ with
$\dd\gamma_j=i\,\omega|_{U_j}$. It is therefore descent data for a $\sigma^c$-lifting of $P$
with central curvature $\omega$.

\textup{(2)} The first functor of \eqref{eq:fibseq2} sends a circle bundle with flat connection
$(Y,\gamma_Y)$ to the extension of structure group $Y\times_{\U(1)}G^c$, whose central
connection is induced by $\gamma_Y$. The central $\U(1)$ acts trivially on $G^c/\U(1)\cong G$,
so the quotient is the trivial principal $G$-bundle and the induced central connection has
curvature $\curv\gamma_Y=0$. The composite is therefore constant at $(M\times G,0)$, so a
natural transformation to the constant functor at $(P,\omega)$ exists exactly when $(P,\omega)$
is isomorphic to $(M\times G,0)$, and $(\mathrm{E}2)$ fails for every other $(P,\omega)$. For
$(P,\omega)=(M\times G,0)$ the composite provides the natural transformation, and the induced
functor $\mathbf P_0(\U(1);\u(1))\to\mathbf L(M\times G,\sigma^c,0)$ is the equivalence of
\textup{(3)} below, hence full and essentially surjective.

\textup{(3)} The homotopy fibre $\mathbf L(P,\sigma^c,\omega)$ is nonempty exactly when
$w(P,\sigma^c,\omega)$ is the identity element, by \textup{(1)}. Suppose it is, and let
$(P^c_1,\gamma_1)$ and $(P^c_2,\gamma_2)$ be two of its objects. Over a good cover
$\mathfrak U$ they have descent data $(\gamma_1,g^c_1)$ and $(\gamma_2,g^c_2)$ above the same
transition functions $g$ of $P$. Since $\sigma^c(g^c_1)=\sigma^c(g^c_2)=g$, the ratio
\[
  \xi_{12}:=g^c_2\,(g^c_1)^{-1}
\]
takes values in the central $\U(1)$, and $\gamma_{12}:=\gamma_2-\gamma_1$ satisfies
$\check\delta\gamma_{12}=\dd\log\xi_{12}$ by \eqref{eq:centralconnection} and
$\dd\gamma_{12}=0$ because both central connections have curvature $\omega$. So
$(\gamma_{12},\xi_{12})$ is a degree one cocycle of $\mathcal C^\bullet$, that is, descent data
for a principal circle bundle with flat connection $(Y_{12},\gamma_{12})$ on $M$. Viewing
$P^c_1$ and $P^c_2$ as principal circle bundles over $P$ through $\bu$, their descent data
differ by $(\gamma_{12},\xi_{12})$, so
\[
  \big(P^c_1\otimes\rho^\ast Y_{12},\ \gamma_1+\rho^\ast\gamma_{12}\big)
  \;\cong\;(P^c_2,\gamma_2)
\]
as principal circle bundles with connection over $P$.

A choice of object $(P^c_1,\gamma_1)$ therefore gives a functor
\[
  \mathbf L(P,\sigma^c,\omega)\longrightarrow\mathbf P_0(\U(1);\u(1)),
  \qquad
  (P^c_2,\gamma_2)\longmapsto(Y_{12},\gamma_{12}),
\]
which is essentially surjective because tensoring $(P^c_1,\gamma_1)$ with a flat circle bundle
again lies in $\mathbf L(P,\sigma^c,\omega)$, and fully faithful because a morphism of
$\mathbf L(P,\sigma^c,\omega)$ covers the identity of $P$ and so is given by a $\U(1)$-valued
function. So every nonempty homotopy fibre of $(\,\cdot\,/\U(1),\curv)$ is equivalent to
$\mathbf P_0(\U(1);\u(1))$, and in particular any two of them are equivalent, whatever their
underlying $(P,\omega)$.
\end{proof}

On isomorphism classes, Theorem~\ref{thm:Pc} descends to a lifting criterion that generalizes
the metaplectic-c prequantizability criterion of \cite[Thm.~1.14]{Hess} and
\cite[(7.3)]{RobinsonRawnsley}.

\begin{corollary}[Liftings with prescribed central curvature]\label{cor:Pc}
Let $M$ be a smooth manifold, let $\sigma:\tilde G\to G$ be a Lie group double covering, and
let $\sigma^c:G^c\to G$ be the associated circle extension \eqref{eq:centralext}. Let $P$ be a
principal $G$-bundle over $M$ and let $\omega$ be a closed $2$-form on $M$.
\begin{enumerate}[label=\textup{(\arabic*)}]
\item There exists a $\sigma^c$-lifting $(P^c,\gamma)$ of $P$ with $\curv\gamma=\omega$ if and
only if $w(P,\sigma^c,\omega)$ is the identity element of $H^2(M;\Uloc)$.
\item $H^1(M;\U(1))$ acts freely and transitively on the isomorphism classes of such liftings,
by tensoring with flat circle bundles, and every such lifting has automorphism group
$H^0(M;\U(1))$.
\end{enumerate}
\end{corollary}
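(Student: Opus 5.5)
The plan is to read the corollary off Theorem~\ref{thm:Pc} by passing to isomorphism classes, using Proposition~\ref{prop:piexact} for the exactness statements and the explicit equivalence built in the proof of Theorem~\ref{thm:Pc}(3) for the torsor statement.

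For (1), the groupoid $\mathbf L(P,\sigma^c,\omega)$ is the homotopy fibre of $(\,\cdot\,/\U(1),\curv)$ at $(P,\omega)$. An object of this groupoid is exactly a $\sigma^c$-lifting of $P$ with central curvature $\omega$, in the sense of Definition~\ref{def:lifting}. Theorem~\ref{thm:Pc}(3) states that this groupoid is nonempty if and only if $w(P,\sigma^c,\omega)$ is the identity element. Existence of such a lifting is therefore equivalent to the vanishing of $w(P,\sigma^c,\omega)$, and (1) follows directly.

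For (2), I would use the equivalence $\mathbf L(P,\sigma^c,\omega)\simeq\mathbf P_0(\U(1);\u(1))$ constructed in the proof of Theorem~\ref{thm:Pc}(3) from a chosen base object $(P^c_1,\gamma_1)$.

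\textbf{Step 1: the action.} The group $\pi_0\mathbf P_0(\U(1);\u(1))\cong\HH^1(\mathcal C^\bullet)\cong H^1(M;\U(1))$, by \eqref{eq:kwidentification}, acts on the set of isomorphism classes. A flat bundle $(Y,\gamma_Y)$ acts by
\[ (P^c,\gamma)\longmapsto(P^c\otimes\rho^\ast Y,\ \gamma+\rho^\ast\gamma_Y), \]
which on descent data multiplies the transition functions $g^c$ by $\xi_Y$ and adds $\gamma_Y$ to the local forms. This preserves \eqref{eq:transitionmaps} and \eqref{eq:centralconnection}, because $\tfrac12\dd\log\eta$ restricts to $\dd\log$ on the centre. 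It keeps $\curv=\omega$, because $\gamma_Y$ is flat. It covers the identity of $P$, because $\xi_Y$ is central.

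\textbf{Step 2: free and transitive.} The equivalence sends $(P^c_2,\gamma_2)$ to the difference bundle $(Y_{12},\gamma_{12})$. On $\pi_0$ it is therefore a bijection that intertwines the action with multiplication in $H^1(M;\U(1))$. This gives freeness and transitivity at once. The main point to check is that isomorphism in $\mathbf L$ means isomorphism covering the identity of $P$. That condition is precisely what makes the difference cochain well defined up to a $\U(1)$-valued $0$-cochain and nothing more. It is the homotopy-fibre versus fibre distinction of \S\ref{subsec:liftings}, and without it the action would be transitive but not necessarily free.

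\textbf{Step 3: automorphisms.} Since the functor is an equivalence, the automorphism group of $(P^c,\gamma)$ in $\mathbf L$ is isomorphic to the automorphism group of an object of $\mathbf P_0(\U(1);\u(1))$. Directly, an automorphism covering $\id_P$ is given by a function $t:M\to\U(1)$ acting through the centre. Preservation of $\gamma$ means $\dd\log t=0$ by \eqref{eq:centralconnectionlocal}. Hence $t$ is locally constant, and the automorphism group is $H^0(M;\U(1))$.

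I expect the only delicate step to be confirming that the functor of Theorem~\ref{thm:Pc}(3) is compatible with the tensor action. This should follow from the descent-data computation there, since $\xi_{12}=g^c_2(g^c_1)^{-1}$ and $\gamma_{12}=\gamma_2-\gamma_1$.
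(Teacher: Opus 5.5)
Your proof is correct and is essentially the paper's. Both read (1) directly off Theorem~\ref{thm:Pc}(3), and obtain (2) on $\pi_0$ and $\pi_1$ from the equivalence of every nonempty $\mathbf L(P,\sigma^c,\omega)$ with $\mathbf P_0(\U(1);\u(1))$.

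The only difference is the route to that equivalence. You use the base-point equivalence from the proof of Theorem~\ref{thm:Pc}(3) directly, and check by hand that it intertwines tensoring with multiplication and that automorphisms are locally constant central gauge transformations. The paper instead passes through $\mathbf L(M\times G,\sigma^c,0)$ and Theorem~\ref{thm:Pc}(2), and leaves the compatibility with the tensor action implicit.
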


\begin{proof}
The $\sigma^c$-liftings of $P$ with central curvature $\omega$ are the objects of
$\mathbf L(P,\sigma^c,\omega)$, the homotopy fibre of $(\,\cdot\,/\U(1),\curv)$ over
$(P,\omega)$. It is nonempty exactly when
$w(P,\sigma^c,\omega)$ is the identity element, by Theorem~\ref{thm:Pc}(3). When it is
nonempty it is equivalent to $\mathbf L(M\times G,\sigma^c,0)$ by the same item, hence to
$\mathbf P_0(\U(1);\u(1))$ by Theorem~\ref{thm:Pc}(2). So
$\pi_0\mathbf L(P,\sigma^c,\omega)$ is a torsor under
$\pi_0\mathbf P_0(\U(1);\u(1))\cong H^1(M;\U(1))$, and the automorphism group of every object
is $\pi_1\mathbf P_0(\U(1);\u(1))\cong H^0(M;\U(1))$.
\end{proof}

\subsection{The splitting problem with prescribed central connection}
\label{subsec:splitting}

Truncating the first two complexes of \eqref{eq:Deltases} at degree one gives
\begin{align*}
  \Delta^\bullet_{\le1} &:= \ \underline{\Delta}\ \to\ 0,\\
  (\tilde{\mathcal G}\times\Usm)^\bullet_{\le1}
    &:= \ \tilde{\mathcal G}\times\Usm \xrightarrow{\frac12\dd\log\eta'} i\Omega^1 \to 0,
\end{align*}
and a short exact sequence of complexes of sheaves
\begin{equation}\label{eq:Deltasestrunc}
  1 \to \Delta^\bullet_{\le1} \to (\tilde{\mathcal G}\times\Usm)^\bullet_{\le1}
    \to (\mathcal G^c)^\bullet \to 1 .
\end{equation}
Removing the terms $i\Omega^{\ge2}$ removes the condition $\dd\gamma_j=0$, so the degree one
cocycles of $(\tilde{\mathcal G}\times\Usm)^\bullet_{\le1}$ glue to pairs consisting of a
principal $\tilde G$-bundle and a principal circle bundle with an arbitrary principal
connection, and $\HH^2(\Delta^\bullet_{\le1})=H^2(M;\Z_2)$ because $\Delta^\bullet_{\le1}$ is
concentrated in degree zero. The proof of Lemma~\ref{lem:gpdLES} applies verbatim to
\eqref{eq:Deltasestrunc} and gives the diagram
\[
\begin{tikzcd}[row sep=3em, column sep=2.6em]
\mathbf H^1(\Delta^\bullet_{\le1})
  \ar[r] \ar[d,"\simeq"']
&
\mathbf H^1\big((\tilde{\mathcal G}\times\Usm)^\bullet_{\le1}\big)
  \ar[r] \ar[d,"\simeq"']
&
\mathbf H^1\big((\mathcal G^c)^\bullet\big)
  \ar[r,"\partial"] \ar[d,"\simeq"']
&
\HH^2(\Delta^\bullet_{\le1})
  \ar[d,"\cong"]
\\
\mathbf P(\mu_2)
  \ar[r]
&
\mathbf P(\tilde G)\times\mathbf P(\U(1);\u(1))
  \ar[r]
&
\mathbf P(G^c;\u(1))
  \ar[r]
&
H^2(M;\Z_2)
\end{tikzcd}
\]
in which the bottom row is \eqref{eq:splitseq} up to a redundant second component
(Lemma~\ref{lem:u1agreement}), the first three vertical arrows are
equivalences of groupoids given by gluing, the first two squares commute up to canonical
natural isomorphism, and the last commutes strictly.

By Lemma~\ref{lem:u1agreement} the class $w(P^c/\U(1),\sigma)$ determines
$e^{i\int \curv\gamma}$, so the obstruction is the pair
$\big(w(\,\cdot\,/\U(1),\sigma),\,e^{i\int \curv}\big)$ in the sequence
\begin{multline}\label{eq:splitseq}
  \mathbf P(\mu_2)
  \xrightarrow{\ (\times_{\mu_2}\tilde G,\ \times_{\mu_2}\U(1))\ }
  \mathbf P(\tilde G)\times\mathbf P(\U(1);\u(1))
  \xrightarrow{\ \times_M/\Delta\ }\\
  \mathbf P(G^c;\u(1))
  \xrightarrow{\ (w(\,\cdot\,/\U(1),\sigma),\ e^{i\int \curv})\ }
  H^2(M;\Z_2)\times H^2(M;\Uloc).
\end{multline}

Let $\mathbf L(P^c,\,\cdot\,/\Delta,\gamma)$ denote the homotopy fibre of $\times_M/\Delta$ in
\eqref{eq:splitseq} over $(P^c,\gamma)$. Its objects are triples
\[
  \Big(\tilde P,\ (Y,\gamma_Y),\
  \hv u:(P^c,\gamma)\xrightarrow{\ \cong\ }\tilde P\times_{\mu_2}(Y,\gamma_Y)\Big),
\]
where $(\tilde P,Y)$ is a lifting of $P^c$ for the Lie group double covering
$\,\cdot\,/\Delta:\tilde G\times\U(1)\to G^c$, where $(Y,\gamma_Y)$ is a principal circle
bundle with connection, and where $\hv u$ is an isomorphism of principal $G^c$-bundles with central
connection. We view these objects as splittings of $(P^c,\gamma)$.

\begin{proposition}\label{prop:splittings}
Let $M$ be a smooth manifold, let $\sigma:\tilde G\to G$ be a Lie group double covering, and
let $\sigma^c:G^c\to G$ be the associated circle extension \eqref{eq:centralext}. Consider the
sequence \eqref{eq:splitseq},
\begin{multline*}
  \mathbf P(\mu_2)
  \xrightarrow{\ (\times_{\mu_2}\tilde G,\ \times_{\mu_2}\U(1))\ }
  \mathbf P(\tilde G)\times\mathbf P(\U(1);\u(1))
  \xrightarrow{\ \times_M/\Delta\ }\\
  \mathbf P(G^c;\u(1))
  \xrightarrow{\ (w(\,\cdot\,/\U(1),\sigma),\ e^{i\int \curv})\ }
  H^2(M;\Z_2)\times H^2(M;\Uloc).
\end{multline*}
Let $(P^c,\gamma)$ be a principal $G^c$-bundle with central connection.
\begin{enumerate}[label=\textup{(\arabic*)}]
\item The sequence \eqref{eq:splitseq} satisfies the exactness condition $(\mathrm{E}1)$ at
$\mathbf P(G^c;\u(1))$ for $(v,w)\in H^2(M;\Z_2)\times H^2(M;\Uloc)$ if and only if $(v,w)$ is
the identity element.

\item The sequence \eqref{eq:splitseq} satisfies the exactness condition $(\mathrm{E}2)$ at
$\mathbf P(\tilde G)\times\mathbf P(\U(1);\u(1))$ for $(P^c,\gamma)$ if and only if
$(P^c,\gamma)$ is isomorphic to the trivial principal $G^c$-bundle with its trivial central
connection. In particular, $\mathbf P(\mu_2)$ is equivalent as a groupoid to the homotopy fibre
$\mathbf L(M\times G^c,\,\cdot\,/\Delta,0)$.

\item The homotopy fibre $\mathbf L(P^c,\,\cdot\,/\Delta,\gamma)$ of $\times_M/\Delta$ at
$(P^c,\gamma)$ is nonempty if and only if $w(P^c/\U(1),\sigma)=0$, and all nonempty homotopy
fibres of $\times_M/\Delta$ are equivalent as groupoids.
\end{enumerate}
\end{proposition}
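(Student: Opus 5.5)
The plan follows the proof of Theorem~\ref{thm:Pc}, with the roles of $\U(1)$ and $\Delta$ exchanged and the truncated sequence \eqref{eq:Deltasestrunc} used in place of \eqref{eq:Deltases}.

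\textbf{Item (1).} First I will check that the composite of the second and third functors of \eqref{eq:splitseq} is constant at the identity. Take a splitting $\tilde P\times_{\mu_2}(Y,\gamma_Y)$. Its quotient by $\U(1)$ is $\tilde P/\mu_2$, which has a $\sigma$-lifting $\tilde P$, so $w(\cdot,\sigma)=0$. The component $e^{i\int\curv}$ is then trivial by Lemma~\ref{lem:u1agreement}. Since the target is discrete, $(\mathrm E1)$ can only hold at the identity. For essential surjectivity onto the homotopy fibre, I start from $(P^c,\gamma)$ with $w(P^c/\U(1),\sigma)=0$ and work on a good cover.
\begin{itemize}[label=--]
\item Lift the transition functions $g^c_{jk}$ to pairs $(\tilde g_{jk},\xi_{jk})$. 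Their coboundary $\varepsilon\in\check C^2(\mathfrak U;\Delta)$ represents $w(P^c/\U(1),\sigma)=0$.
\item Hence $\varepsilon=\check\delta\tau$ for some locally constant $\tau\in\check C^1(\mathfrak U;\Delta)$. Replacing $(\tilde g,\xi)$ by $(\tilde g,\xi)\tau^{-1}$ gives strict cocycles, and the image in $G^c$ is unchanged.
\item Since $\tau$ is locally constant, $\dd\log\xi$ is unchanged. Relation \eqref{eq:centralconnection} then reads $\check\delta\gamma=\dd\log\xi$, so $(\gamma,\xi)$ is a cocycle of the truncated complex with no flatness condition.
\end{itemize}
This cocycle defines $(\tilde P,(Y,\gamma_Y))$ together with an isomorphism $\hv u$ given by the identity on descent data. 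That object lies in the homotopy fibre, which proves essential surjectivity.

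\textbf{Item (3).} Given two splittings of the same $(P^c,\gamma)$, compare their descent data above the common $g^c$ and $\gamma$. The ratios of the pairs $(\tilde g,\xi)$ lie in $\Delta$ and form a $\mu_2$-cocycle $T_{12}$. The connection forms agree, because both equal $\gamma_j$ in the chosen trivializations.

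Using this, I will show that fixing one splitting gives an equivalence $\mathbf L(P^c,\cdot/\Delta,\gamma)\simeq\mathbf P(\mu_2)$, sending a second splitting to $T_{12}$.
\begin{itemize}[label=--]
\item Essential surjectivity: twist the fixed splitting by an arbitrary $\mu_2$-bundle $T$, giving $(\tilde P\otimes T,Y\otimes T)$. The connection is induced on $Y\times_{\mu_2}T$, and since $T$ is flat it still matches $\gamma$.
\item Full faithfulness: a morphism of splittings must cover the identity of $P^c$. It is therefore given locally by a $\Delta$-valued function that is locally constant and compatible with $\hv u$, and these are exactly the morphisms of $\mathbf P(\mu_2)$.
\end{itemize}
Nonemptiness is item (1).

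\textbf{Item (2).} The composite of the first two functors $T\mapsto(T\times_{\mu_2}\tilde G,T\times_{\mu_2}\U(1))\mapsto\cdot/\Delta$ is canonically trivial. This holds because $\mu_2$ maps into $\Delta$, which is quotiented out, and the connection is the flat trivial one. Hence a natural transformation to the constant functor exists exactly at the trivial $(M\times G^c,0)$, and $(\mathrm E2)$ fails elsewhere. At the trivial object, the induced functor $\mathbf P(\mu_2)\to\mathbf L(M\times G^c,\cdot/\Delta,0)$ coincides with the equivalence of item (3) based at the trivial splitting, so it is full and essentially surjective.

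\textbf{Main obstacle.} The step I expect to be delicate is the bookkeeping in item (3): checking that the connection $\gamma_Y$ is \emph{determined} by $\gamma$, so that the homotopy fibre has no continuous moduli. This is what explains the answer $\mathbf P(\mu_2)$, as opposed to $\mathbf P(\mu_2)$ times flat circle bundles. My first approach is to observe that $\frac12\dd\log\eta'$ restricted to $\Delta$ vanishes and that $\Delta^\bullet_{\le1}$ has no degree-one term. On this basis I would also invoke Proposition~\ref{prop:piexact} together with the diagram for \eqref{eq:Deltasestrunc} as a cross-check.
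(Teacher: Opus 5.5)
Your proposal is correct and follows the paper's proof essentially step by step. For (1) you lift the transition functions on a good cover and correct them by a $\Delta$-valued cochain; for (3) you compare two splittings through the $\Delta$-valued ratio cocycle $T_{12}$, with the local connection forms forced to equal $\gamma$; and for (2) you reduce to the equivalence of (3) at the trivial object. The only cosmetic difference is that you get triviality of the $e^{i\int\curv}$ component from Lemma~\ref{lem:u1agreement} once $w(\,\cdot\,/\U(1),\sigma)=0$, whereas the paper cites Kostant--Weil integrality of $\curv\gamma_Y$ (Theorem~\ref{thm:kostantweil}); either works.
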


\begin{proof}
\textup{(1)} The composite
$\big(w(\,\cdot\,/\U(1),\sigma),\,e^{i\int \curv}\big)\circ(\times_M/\Delta)$ sends a pair
$\big(\tilde P,(Y,\gamma_Y)\big)$ to the identity element because $\tilde P$ is a
$\sigma$-lifting of $\big(\tilde P\times_{\mu_2}Y\big)/\U(1)\cong\tilde P/\mu_2$, and the
induced central connection has curvature equal to $\curv\gamma_Y$, which is integral by
Theorem~\ref{thm:kostantweil}. Since $H^2(M;\Z_2)\times H^2(M;\Uloc)$ is discrete, a natural
transformation from this composite to the constant functor at $(v,w)$ exists if and only if
$(v,w)$ is the identity element, so $(\mathrm{E}1)$ fails for every other $(v,w)$. It remains
to show that the induced functor
\[
  \mathbf P(\tilde G)\times\mathbf P(\U(1);\u(1))\longrightarrow
  \hofib{\big(w(\,\cdot\,/\U(1),\sigma),\,e^{i\int \curv}\big)}{\,\id\,}
\]
is essentially surjective. Since the objects of that homotopy fibre are the pairs
$(P^c,\gamma)$ whose obstruction is the identity element, it suffices to construct a splitting
of each such $(P^c,\gamma)$.

Take a good cover $\mathfrak U$ of $M$, so that $(P^c,\gamma)$ has descent data
$(\gamma,g^c)$. Choose representatives $(\tilde g_{jk},\xi_{jk})$ of $g^c_{jk}$ in
$\tilde G\times\U(1)$. Their coboundary projects to $\check\delta g^c=1$, so it takes values in
$\Delta$, and as in Lemma~\ref{lem:u1agreement} its two components agree and define a cocycle
$\varepsilon\in\check C^2(\mathfrak U;\underline{\Z_2})$ representing $w(P^c/\U(1),\sigma)$.
Since that class vanishes, $\varepsilon=\check\delta u$ for some
$u\in\check C^1(\mathfrak U;\underline{\Z_2})$, and replacing
$(\tilde g_{jk},\xi_{jk})$ by $(\tilde g_{jk},\xi_{jk})\,(-1)^{u_{jk}}$ makes the
coboundary trivial. The modified pair still satisfies
$\tfrac12\dd\log\eta'(\tilde g,\xi)=\dd\log\xi=\check\delta\gamma$, because $(-1)^u$ is
locally constant and $\tfrac12\dd\log\eta(g^c)=\check\delta\gamma$ by
\eqref{eq:centralconnection}. So $\big(\gamma,(\tilde g,\xi)\big)$ is a degree one cocycle of
$(\tilde{\mathcal G}\times\Usm)^\bullet_{\le1}$, that is, descent data for a principal
$\tilde G$-bundle $\tilde P$ and a principal circle bundle with connection $(Y,\gamma_Y)$ whose
central product is $(P^c,\gamma)$.

\textup{(2)} The first functor of \eqref{eq:splitseq} sends a principal $\mu_2$-bundle $T$,
with locally constant transition functions $\varepsilon_{jk}$, to the pair
$\big(T\times_{\mu_2}\tilde G,\ T\times_{\mu_2}\U(1)\big)$, whose transition functions are the
pairs $(\varepsilon_{jk},\varepsilon_{jk})$ and whose connection forms are zero. These pairs
lie in $\Delta$, so the central product has transition functions
$[\varepsilon_{jk},\varepsilon_{jk}]=[1,1]$ and zero connection forms. The composite is
therefore constant at the trivial principal $G^c$-bundle with its trivial central connection,
so a natural transformation to the constant functor at $(P^c,\gamma)$ exists exactly when
$(P^c,\gamma)$ is isomorphic to it, and $(\mathrm{E}2)$ fails for every other $(P^c,\gamma)$.
For the trivial bundle the composite provides the natural transformation, and the induced
functor $\mathbf P(\mu_2)\to\mathbf L(M\times G^c,\,\cdot\,/\Delta,0)$ is the equivalence of
\textup{(3)} below, hence full and essentially surjective.

\textup{(3)} The homotopy fibre $\mathbf L(P^c,\,\cdot\,/\Delta,\gamma)$ is nonempty exactly
when $w(P^c/\U(1),\sigma)=0$, by \textup{(1)}. Suppose it is, and let
$\big(\tilde P_1,(Y_1,\gamma_1)\big)$ and $\big(\tilde P_2,(Y_2,\gamma_2)\big)$ be two of its
objects. Over a good cover $\mathfrak U$ they have descent data
$\big(\gamma_1,(\tilde g_1,\xi_1)\big)$ and $\big(\gamma_2,(\tilde g_2,\xi_2)\big)$ above the
same transition functions $g^c$ of $P^c$, so the ratio
\[
  \epsilon_{12}:=\big(\tilde g_2\,\tilde g_1^{-1},\ \xi_2\,\xi_1^{-1}\big)
\]
takes values in $\Delta\cong\mu_2$. It is locally constant because $\Delta$ is discrete, and a
cocycle because $(\tilde g_1,\xi_1)$ and $(\tilde g_2,\xi_2)$ are, so it is descent data for a
principal $\mu_2$-bundle $T_{12}$. Both objects lie above the same descent data
$(\gamma,g^c)$ of $(P^c,\gamma)$, and the central product does not change $\u(1)$-valued local
connection forms, so $\gamma_2=\gamma_1=\gamma$ as local forms. These forms are compatible with
$T_{12}$ since $\dd\log(-1)^{\epsilon_{12}}=0$, and tensoring gives
\[
  \big(\tilde P_1,(Y_1,\gamma_1)\big)\otimes T_{12}
  \;\cong\;\big(\tilde P_2,(Y_2,\gamma_2)\big).
\]

A choice of object $\big(\tilde P_1,(Y_1,\gamma_1)\big)$ therefore gives a functor
\[
  \mathbf L(P^c,\,\cdot\,/\Delta,\gamma)\longrightarrow\mathbf P(\mu_2),
  \qquad
  \big(\tilde P_2,(Y_2,\gamma_2)\big)\longmapsto T_{12},
\]
which is essentially surjective because tensoring $\big(\tilde P_1,(Y_1,\gamma_1)\big)$ with a
principal $\mu_2$-bundle again lies in $\mathbf L(P^c,\,\cdot\,/\Delta,\gamma)$, and fully
faithful because a morphism of $\mathbf L(P^c,\,\cdot\,/\Delta,\gamma)$ covers the identity of
$P^c$ and so is given by a $\Delta$-valued function. So every nonempty homotopy fibre of
$\times_M/\Delta$ is equivalent to $\mathbf P(\mu_2)$, and in particular any two of them are
equivalent.
\end{proof}

\begin{corollary}[Splittings of the structure group]\label{cor:splittings}
Let $M$ be a smooth manifold, let $\sigma:\tilde G\to G$ be a Lie group double covering, and
let $\sigma^c:G^c\to G$ be the associated circle extension \eqref{eq:centralext}. Let
$(P^c,\gamma)$ be a principal $G^c$-bundle with central connection.
\begin{enumerate}[label=\textup{(\arabic*)}]
\item There exist a principal $\tilde G$-bundle $\tilde P$, a principal circle bundle with
connection $(Y,\gamma_Y)$, and an isomorphism of principal $G^c$-bundles with central
connection
\[
  (P^c,\gamma)\;\cong\;\tilde P\times_{\mu_2}(Y,\gamma_Y)
\]
if and only if $w(P^c/\U(1),\sigma)=0$. In that case $\curv\gamma$ is integral.
\item $H^1(M;\Z_2)$ acts freely and transitively on the isomorphism classes of such data.
\end{enumerate}
\end{corollary}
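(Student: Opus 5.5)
The corollary should follow by passing Proposition~\ref{prop:splittings} to isomorphism classes, exactly as Corollary~\ref{cor:Pc} was obtained from Theorem~\ref{thm:Pc}. The data in~(1), a triple $\big(\tilde P,(Y,\gamma_Y),\hv u\big)$, are precisely the objects of the homotopy fibre $\mathbf L(P^c,\,\cdot\,/\Delta,\gamma)$ of $\times_M/\Delta$ in \eqref{eq:splitseq} over $(P^c,\gamma)$. Here the central product $\tilde P\times_{\mu_2}(Y,\gamma_Y)$ means $(\tilde P\times_M Y)/\Delta$ with the induced central connection.

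For~(1), I will apply Proposition~\ref{prop:splittings}(3) directly. It says that this fibre is nonempty if and only if $w(P^c/\U(1),\sigma)=0$, and its proof in part~(1) constructs the descent data $\big(\gamma,(\tilde g,\xi)\big)$ explicitly. For the integrality claim, take a splitting. Since the central product leaves the $\u(1)$-valued local connection forms unchanged, we have $\curv\gamma=\curv\gamma_Y$. The latter is integral by Theorem~\ref{thm:kostantweil}(1). Alternatively, Lemma~\ref{lem:u1agreement} with $w(P,\sigma)=0$ gives $e^{i\int\curv\gamma}=1$ directly.

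For~(2), Proposition~\ref{prop:splittings}(3) also shows that each nonempty fibre is equivalent to $\mathbf P(\mu_2)$. The equivalence is fixed by choosing a base splitting $\big(\tilde P_1,(Y_1,\gamma_1)\big)$, and it sends another splitting to the difference $\mu_2$-bundle $T_{12}$. Its quasi-inverse is tensoring with $T$. Passing to $\pi_0$, tensoring gives an action of $\pi_0\mathbf P(\mu_2)\cong H^1(M;\Z_2)$ on isomorphism classes of splittings, and the equivalence shows this action is free and transitive.

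The one step needing care is making sure ``isomorphism classes of such data'' means isomorphism classes in the homotopy fibre. These are isomorphisms compatible with the fixed identification $\hv u$, as in Definition~\ref{def:lifting}, and not merely isomorphism classes of the pairs $(\tilde P,Y)$. With the coarser notion the action would be transitive but might fail to be free, as discussed after \eqref{eq:classicalseq}. I will state this convention explicitly and check that the tensoring action respects $\hv u$. It does, because $T\times_{\mu_2}\Delta$-twisting by the antidiagonal is trivial on $G^c$, which is the computation in Proposition~\ref{prop:splittings}(2).
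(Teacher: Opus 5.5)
Your proposal is correct and follows the paper's own proof. As the paper does, you identify the splittings with objects of $\mathbf L(P^c,\,\cdot\,/\Delta,\gamma)$, take nonemptiness from Proposition~\ref{prop:splittings}(3), obtain the free and transitive $H^1(M;\Z_2)$-action from the equivalence of nonempty fibres with $\mathbf P(\mu_2)$, and deduce integrality from $\curv\gamma=\curv\gamma_Y$ together with Theorem~\ref{thm:kostantweil}. The only cosmetic difference is that you use the equivalence to $\mathbf P(\mu_2)$ built directly in the proof of item~(3), whereas the paper's corollary routes through the fibre over the trivial bundle and item~(2).
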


\begin{proof}
The splittings of $(P^c,\gamma)$ are the objects of
$\mathbf L(P^c,\,\cdot\,/\Delta,\gamma)$, the homotopy fibre of $\times_M/\Delta$ at
$(P^c,\gamma)$. It is nonempty exactly when $w(P^c/\U(1),\sigma)=0$, by
Proposition~\ref{prop:splittings}(3). When it is nonempty it is equivalent to
$\mathbf L(M\times G^c,\,\cdot\,/\Delta,0)$ by the same item, hence to $\mathbf P(\mu_2)$ by
Proposition~\ref{prop:splittings}(2). So $\pi_0\mathbf L(P^c,\,\cdot\,/\Delta,\gamma)$ is a
torsor under $\pi_0\mathbf P(\mu_2)\cong H^1(M;\Z_2)$. When such data exist, $\curv\gamma$ is
the curvature of the principal connection $\gamma_Y$, hence integral by
Theorem~\ref{thm:kostantweil}.
\end{proof}

\subsection{The Bockstein relation}
\label{subsec:bockstein}

Let $\sigma:\tilde G\to G$ be a Lie group double covering and let $\sigma^c:G^c\to G$ be its
associated circle extension \eqref{eq:centralext}. We now relate the obstruction classes for
the existence of $\sigma$-liftings, of $\sigma^c$-liftings, and of $\sigma^c$-liftings with
prescribed central curvature $\omega$. Here we view the first two classes as the obstruction
classes of \eqref{eq:classicalseq}, under the identifications
\[
  w(P,\sigma)\in H^2(M;\underline{\mu_2})\cong H^2(M;\Z_2),
  \qquad
  w(P,\sigma^c)\in H^2(M;\Usm)\cong H^3(M;\Z),
\]
and the third is $w(P,\sigma^c,\omega)\in H^2(M;\Uloc)$ of \eqref{eq:wclassdef}. They are
related by the Bockstein homomorphisms for the exact sequences of abelian groups
\begin{equation}\label{eq:coeffmorphism}
\begin{tikzcd}[row sep=2.4em, column sep=3.2em]
0 \ar[r] & \Z \ar[r,hook,"2"] \ar[d,equal] & \Z \ar[r] \ar[d,hook,"{\frac12}"]
  & \Z_2 \ar[r] \ar[d,hook,"{(-1)^\cdot}"] & 0\\
0 \ar[r] & \Z \ar[r,hook] & \R \ar[r,"{\exp(2\pi i\,\cdot\,)}"'] & \U(1) \ar[r] & 1 .
\end{tikzcd}
\end{equation}

\begin{theorem}[Bockstein relations]\label{thm:bockstein}
Let $M$ be a smooth manifold, let $\sigma:\tilde G\to G$ be a Lie group double covering, and
let $\sigma^c:G^c\to G$ be the associated circle extension \eqref{eq:centralext}.
Let $P$ be a principal $G$-bundle over $M$ and let $\omega$ be a closed $2$-form on $M$. Let
\[
  \beta_2:H^k(M;\Z_2)\to H^{k+1}(M;\Z)
  \qquad\text{and}\qquad
  \beta:H^k(M;\Uloc)\to H^{k+1}(M;\Z)
\]
be the Bockstein homomorphisms for the sequences in \eqref{eq:coeffmorphism}. Then under the
identification $H^2(M;\Usm)\cong H^3(M;\Z)$,
\[
  w(P,\sigma^c)\;=\;\beta\big(w(P,\sigma^c,\omega)\big)\;=\;\beta_2\big(w(P,\sigma)\big).
\]
\end{theorem}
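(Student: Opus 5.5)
I will prove the two equalities separately, both by explicit \v{C}ech cochain computations on a good cover $\mathfrak U$, reusing the lifts constructed in the proof of Theorem~\ref{thm:Pc}(1).

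\emph{Second equality.} The map of short exact coefficient sequences \eqref{eq:coeffmorphism} and naturality of Bocksteins give $\beta\circ(-1)^{\cdot}=\beta_2$ on $H^2(M;\Z_2)$. Since $w(P,\sigma^c,\omega)=(-1)^{w(P,\sigma)}\cdot e^{i\int\omega}$, it therefore suffices to show $\beta(e^{i\int\omega})=0$. I will use the factorization of $e^{i\int}:Z^2_{\dR}(M)\to H^2(M;\Uloc)$ through $H^2(M;\R)\to H^2(M;\R)/H^2(M;\Z)_\R\hookrightarrow H^2(M;\Uloc)$ coming from the exponential sequence $0\to\Z\to\R\to\U(1)\to1$. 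Classes in the image of $H^2(M;\R)$ lie in the kernel of $\beta$ by exactness of the Bockstein long exact sequence. Concretely, at cochain level the $\Uloc$-cocycle $\check\delta\xi^{-1}$ from the proof of Theorem~\ref{thm:Pc}(1) is $\exp(2\pi i\,r)$ for a real \v{C}ech--de Rham representative $r$ of $[\omega/2\pi]$. Hence its $\beta$, computed as $\check\delta r$ minus the lift, is zero.

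\emph{First equality.} Here I compare the connecting maps directly. Choose lifts $\tilde g_{jk}$ of the transition functions $g_{jk}$ of $P$, with $\check\delta\tilde g=(-1)^{\varepsilon}$ representing $w(P,\sigma)$. Then $[\tilde g_{jk},1]\in G^c$ are lifts of $g_{jk}$ along $\sigma^c$, and their coboundary is $[(-1)^\varepsilon,1]=[1,(-1)^\varepsilon]$, a $\Usm$-valued (indeed $\mu_2$-valued) $2$-cocycle. By the definition of the connecting map in \eqref{eq:classicalseq}, this cocycle represents $w(P,\sigma^c)\in H^2(M;\Usm)$. It is the image of $w(P,\sigma)$ under $H^2(M;\underline{\mu_2})\to H^2(M;\Uloc)\to H^2(M;\Usm)$.

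It then remains to identify the composite $H^2(M;\Uloc)\to H^2(M;\Usm)\cong H^3(M;\Z)$ with $\beta$. I expect this to be the main bookkeeping obstacle. The isomorphism $H^2(M;\Usm)\cong H^3(M;\Z)$ is itself the Bockstein of $0\to\Z\to\mathcal R\to\Usm\to1$ with smooth $\mathcal R$ (fine). That sequence receives the constant sequence $0\to\Z\to\R\to\Uloc\to1$, so naturality of connecting maps gives the identification. Taking logarithms $\tfrac1{2\pi i}\log$ of $(-1)^\varepsilon$ as $\varepsilon/2$ and applying $\check\delta$ yields the integral cocycle $\tfrac12\check\delta\tilde\varepsilon$, which is $\beta_2(w(P,\sigma))$ by the top row of \eqref{eq:coeffmorphism}. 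This closes the chain. Signs are harmless since all the classes involved have order $2$.
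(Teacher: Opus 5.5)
Your proposal is correct and follows essentially the same route as the paper. You kill $\beta(e^{i\int\omega})$ because it lifts to a real class, then compute $w(P,\sigma^c)$ from the lifts $[\tilde g_{jk},1]$ and identify it with $\beta_2(w(P,\sigma))$ via an integral lift $\tilde\varepsilon$ and the exponential sheaf sequence. Your explicit naturality step, mapping $0\to\Z\to\R\to\Uloc\to1$ into $0\to\Z\to\mathcal R\to\Usm\to1$, only spells out what the paper does implicitly when it treats $\tfrac12\tilde\varepsilon$ as a smooth real lift of $\check\delta[\tilde g,1]$.
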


\begin{proof}
Since $e^{i\int \omega}$ lifts to the real class $[\tfrac1{2\pi}\omega]\in H^2(M;\R)$, its
Bockstein vanishes, so by \eqref{eq:wclassdef}
\[
  \beta\big(w(P,\sigma^c,\omega)\big)
  =\beta\big((-1)^{w(P,\sigma)}\cdot e^{i\int \omega}\big)
  =\beta\big((-1)^{w(P,\sigma)}\big)
  =\beta_2\big(w(P,\sigma)\big),
\]
the last equality by the commutativity of \eqref{eq:coeffmorphism} and naturality of connecting
homomorphisms. It remains to identify $\beta_2\big(w(P,\sigma)\big)$ with $w(P,\sigma^c)$.

Fix a good cover $\mathfrak U$ of $M$, transition functions $g_{jk}$ for $P$, and lifts
$\tilde g_{jk}$ of $g_{jk}$ to $\tilde G$, so that
$\check\delta\tilde g=(-1)^\varepsilon$ with
$\varepsilon\in\check C^2(\mathfrak U;\underline{\Z_2})$ a cocycle representing $w(P,\sigma)$.
Choose $\tilde\varepsilon\in\check C^2(\mathfrak U;\underline\Z)$ reducing to $\varepsilon$ mod
$2$, which exists because $\mathfrak U$ is a good cover. Then
\begin{equation}\label{eq:bocksteinlift}
  \check\delta\tilde g
  =(-1)^{\varepsilon}
  =(-1)^{\tilde\varepsilon}
  =\exp\big(2\pi i\,\tfrac12\tilde\varepsilon\big).
\end{equation}

Viewing $\tilde\varepsilon$ as an integral lift of $\varepsilon$,
\[
  \Big[\tfrac12\,\check\delta\tilde\varepsilon\Big]=\beta_2\big(w(P,\sigma)\big) .
\]

Viewing $\tfrac12\tilde\varepsilon\in\check C^2(\mathfrak U;\mathcal R)$ as a real lift of
$[1,(-1)^\varepsilon]=\check\delta[\tilde g,1]$ by \eqref{eq:bocksteinlift}, the class
$\big[\tfrac12\,\check\delta\tilde\varepsilon\big]$ in $H^3(M;\Z)$ is the image of
$w(P,\sigma^c)=\big[\check\delta[\tilde g,1]\big]$ under the connecting homomorphism for the
exponential sheaf sequence $0\to\underline\Z\to\mathcal R\to\Usm\to1$.
\end{proof}

If $E$ is a symplectic vector bundle over $M$ with symplectic frame bundle $P$, then
$w(P,\ssigma)=w_2(E)$ is the mod $2$ reduction of $c_1(E)$, so
$w(P,\ssigma^c)=\beta_2\big(w(P,\ssigma)\big)$ vanishes by exactness of
$H^2(M;\Z)\xrightarrow{r}H^2(M;\Z_2)\xrightarrow{\beta_2}H^3(M;\Z)$, and $P$ always admits a
metaplectic-c lifting. In particular every symplectic manifold admits a metaplectic-c structure
\cite[Thm.~6.2]{RobinsonRawnsley}, \cite{ForgerHess}, \cite{Plymen}.

\subsection{Torsion independence of the shifted integrality condition}
\label{subsec:torsion}

When $P$ admits a $\sigma^c$-lifting, there is a real coefficient form of
Corollary~\ref{cor:Pc} that takes the form of a half-integral shifted integrality condition.
Since $w(P,\sigma^c)=\beta_2\big(w(P,\sigma)\big)=0$, there is an
integral class $\widetilde w(P,\sigma)\in H^2(M;\Z)$ whose mod $2$ reduction is $w(P,\sigma)$.
Then
\begin{equation}\label{eq:w2c1pairing}
  (-1)^{\langle w(P,\sigma),S\rangle}
  =(-1)^{\langle \widetilde w(P,\sigma),S\rangle}
  =(-1)^{\langle \widetilde w(P,\sigma)_\R,S_\R\rangle_\R}
  \qquad\text{for all } S\in H_2(M;\Z),
\end{equation}
because the $\Z$-valued evaluation pairing evaluates trivially on the torsion elements of
$H^2(M;\Z)$ and of $H_2(M;\Z)$.

\begin{proposition}\label{prop:hessform}
If $w(P,\sigma)$ is the mod $2$ reduction of an integral cohomology class
$\widetilde w(P,\sigma)\in H^2(M;\Z)$, then $w(P,\sigma^c,\omega)$ is the identity element of
$H^2(M;\Uloc)$ if and only if
\begin{equation}\label{eq:hessform}
  \tfrac{1}{2}\widetilde w(P,\sigma)_{\R}+\Big[\tfrac{1}{2\pi}\,\omega\Big]
  \ \in\ H^2(M;\Z)_\R .
\end{equation}
\end{proposition}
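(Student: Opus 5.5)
The plan is to work entirely under the universal coefficient identification $H^2(M;\Uloc)\cong\Hom_\Z(H_2(M;\Z),\U(1))$ that the paper uses for $e^{i\int}$ and $(-1)^\cdot$. By \eqref{eq:wclassdef}, $w(P,\sigma^c,\omega)$ is the homomorphism
\[ S\longmapsto (-1)^{\langle w(P,\sigma),S\rangle}\,e^{i\int_S\omega}. \]
It is the identity element exactly when this character is trivial on every $S\in H_2(M;\Z)$. The first step rewrites the sign factor using \eqref{eq:w2c1pairing}:
\[ (-1)^{\langle w(P,\sigma),S\rangle}=\exp\big(2\pi i\cdot\tfrac12\langle\widetilde w(P,\sigma)_\R,S_\R\rangle\big). \]
Likewise $e^{i\int_S\omega}=\exp\big(2\pi i\,\langle[\tfrac1{2\pi}\omega],S_\R\rangle\big)$ by de Rham's theorem. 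The product therefore equals $\exp\big(2\pi i\,\langle c,S_\R\rangle\big)$ with
\[ c:=\tfrac12\widetilde w(P,\sigma)_\R+\big[\tfrac1{2\pi}\omega\big]\in H^2(M;\R). \]
So triviality of $w(P,\sigma^c,\omega)$ is equivalent to $\langle c,S_\R\rangle\in\Z$ for all $S\in H_2(M;\Z)$.

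The second step identifies this condition with $c\in H^2(M;\Z)_\R$, meaning $c$ lies in the image of $H^2(M;\Z)\to H^2(M;\R)$. The forward implication is immediate, since integral classes pair integrally with integral cycles. For the converse I would use the universal coefficient theorem twice. First, $H^2(M;\R)\cong\Hom(H_2(M;\Z),\R)$. Second, $H^2(M;\Z)\to\Hom(H_2(M;\Z),\Z)$ is surjective, with kernel the Ext term. A real class whose pairing is $\Z$-valued defines an element of $\Hom(H_2(M;\Z),\Z)$. Lift that element to some $a\in H^2(M;\Z)$. Then $a_\R$ and $c$ have the same pairing with every integral cycle, hence the same image in $\Hom(H_2(M;\Z),\R)$, hence $a_\R=c$.

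The main point needing care is that the argument must not depend on which integral lift $\widetilde w(P,\sigma)$ is chosen, or on torsion. Two lifts differ by $2x$ for some $x\in H^2(M;\Z)$, plus torsion whose real image vanishes. Halving $2x$ shifts $c$ by $x_\R$, which lies in $H^2(M;\Z)_\R$. This is exactly the content of \eqref{eq:w2c1pairing}: torsion in $H^2(M;\Z)$ and in $H_2(M;\Z)$ is invisible to real pairings. For noncompact $M$ with infinitely generated homology, the universal coefficient statements above still hold over a field and over $\Z$, so no finiteness assumption is needed.
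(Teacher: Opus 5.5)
Your proof is correct and follows the paper's route: you pair $w(P,\sigma^c,\omega)$ with $S\in H_2(M;\Z)$, rewrite the sign through \eqref{eq:w2c1pairing} as $\exp\big(2\pi i\,\langle c,S_\R\rangle\big)$, and reduce triviality to integrality of that bracket for every $S$. Your universal coefficient argument, using $\mathrm{Ext}(H_1(M;\Z),\R)=0$ and the surjectivity of $H^2(M;\Z)\to\Hom(H_2(M;\Z),\Z)$, shows that integer pairings force $c\in H^2(M;\Z)_\R$; it makes explicit the final step, which the paper states without comment.
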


\begin{proof}
By \eqref{eq:wclassdef} and \eqref{eq:w2c1pairing},
\[
  \big\langle w(P,\sigma^c,\omega),S\big\rangle
  = e^{i\int_S\omega}\,(-1)^{\langle w(P,\sigma),S\rangle}
  = \exp\Big(2\pi i\Big[\tfrac12\big\langle \widetilde w(P,\sigma)_\R,S_\R\big\rangle_\R
      +\tfrac{1}{2\pi}\textstyle\int_S\omega\Big]\Big)
\]
for every $S\in H_2(M;\Z)$, so $w(P,\sigma^c,\omega)$ is the identity element if and only if
the bracket lies in $\Z$ for every $S$, which is \eqref{eq:hessform}.
\end{proof}

Any two integral lifts of $w(P,\sigma)$ differ by an element of $2H^2(M;\Z)$, so the
coset
\begin{equation}\label{eq:shiftedlattice}
  \Lambda(P,\sigma):=\tfrac12\widetilde w(P,\sigma)_\R+H^2(M;\Z)_\R\ \subseteq\ H^2_{\dR}(M;\R)
\end{equation}
does not depend on the lift. The condition \eqref{eq:hessform} is equivalent to
$[\tfrac1{2\pi}\omega]\in\Lambda(P,\sigma)$.

\begin{example}[An Enriques surface]\label{ex:enriques}
Let $X$ be an Enriques surface. Its first Chern class $c_1(X)=-K_X$ is nontrivial and
$2$-torsion, so $X$ admits no metaplectic structure. However $c_1(X)_\R=0$, so the
metaplectic-c prequantizability criterion reduces to the Kostant--Weil integrality condition.
\end{example}

\subsection{Unobstructedness of metaplectic-c prequantizability}
\label{subsec:unobstructed}

While every symplectic manifold admits a metaplectic-c structure, the Bockstein relation
(Theorem~\ref{thm:bockstein}) shows that the obstruction classes are different. Thus it remains
to show that every symplectic manifold admits a symplectic form for which it is metaplectic-c
prequantizable.

Recall that the \textbf{symplectic cone} $\mathcal C_M$ in $H^2_{\dR}(M;\R)$ is the subset of
cohomology classes represented by symplectic forms \cite[\S1]{McDuffIsotopy}.

\begin{proposition}\label{prop:universal}
Let $M$ be a symplectic manifold without boundary, with symplectic frame bundle $P$. Then a
cohomology class $a\in H^2_{\dR}(M;\R)$ is represented by $\tfrac1{2\pi}\omega$ for a symplectic
form $\omega$ with $(P,\omega)$ metaplectic-c prequantizable if and only if
$a\in\Lambda(P,\ssigma)\cap\mathcal C_M$. The intersection
$\Lambda(P,\ssigma)\cap\mathcal C_M$ is nonempty for every symplectic manifold $M$, and equals
$\Lambda(P,\ssigma)$ when $M$ is open.
\end{proposition}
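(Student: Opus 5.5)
The plan is to reduce everything to Proposition~\ref{prop:hessform} and then treat the existence statement separately for closed and open $M$.

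\emph{Reduction.} For a symplectic form $\omega'$ on $M$, let $P_{\omega'}$ be its symplectic frame bundle. By the remark after Theorem~\ref{thm:bockstein}, $w(P_{\omega'},\ssigma)=w_2(TM)$ is the mod $2$ reduction of $c_1(TM,\omega')$. The class $w_2(TM)$ is a topological invariant of the real bundle $TM$, so it does not depend on $\omega'$. The lift $c_1(TM,\omega')$ may depend on $\omega'$, but any two integral lifts of $w_2(TM)$ differ by an element of $2H^2(M;\Z)$. Hence the coset $\Lambda(P_{\omega'},\ssigma)$ of \eqref{eq:shiftedlattice} is the same coset $\Lambda:=\Lambda(P,\ssigma)$ for every symplectic form. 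By Corollary~\ref{cor:Pc} and Proposition~\ref{prop:hessform}, $(P_{\omega'},\omega')$ is metaplectic-c prequantizable if and only if $[\tfrac1{2\pi}\omega']\in\Lambda$. This gives the first claim: a class $a$ is represented by $\tfrac1{2\pi}\omega'$ for such an $\omega'$ if and only if $a\in\Lambda\cap\mathcal C_M$. Implicit here is the convention that $\mathcal C_M$ is taken in the normalization $\tfrac1{2\pi}[\omega]$, or equivalently that $\mathcal C_M$ is a cone, so scaling by $2\pi$ is harmless.

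\emph{Open case.} If $M$ is open (noncompact without boundary, in each component), I would invoke Gromov's $h$-principle for symplectic forms on open manifolds. It states that every class in $H^2_{\dR}(M;\R)$ is represented by a symplectic form homotopic through nondegenerate forms to a given one. Hence $\mathcal C_M=H^2_{\dR}(M;\R)$ and $\Lambda\cap\mathcal C_M=\Lambda$. This set is nonempty because it contains $\tfrac12 c_1(TM,\omega)_\R$. No finiteness of cohomology is needed here, which is why the open case is handled this way.

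\emph{Closed case.} If $M$ is closed, $H^2_{\dR}(M;\R)$ is finite dimensional, and by de Rham's theorem $H^2(M;\Z)_\R$ is a full-rank lattice. Hence $\Lambda$ has finite covering radius $R$: every ball of radius $R$ contains a point of $\Lambda$. The set $\mathcal C_M$ is an open cone. It is a cone because $t\omega$ is symplectic for $t>0$. It is open because for a closed $2$-form $\beta$ with small $C^0$ norm, $\omega+\beta$ stays nondegenerate; choose harmonic (or fixed) representatives of a basis so that small classes have small representatives. So $\mathcal C_M$ contains some ball $B(a_0,r)$ with $a_0=[\tfrac1{2\pi}\omega]$. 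Scaling gives $B(ta_0,tr)\subseteq\mathcal C_M$. Taking $tr>R$, this ball contains a point of $\Lambda$.

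\emph{Main obstacle.} The main obstacle is the open case, which rests on the cited $h$-principle rather than on anything proved in the paper. A mixed manifold with both compact and noncompact components would be handled componentwise, by applying each argument separately.
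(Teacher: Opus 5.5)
Your proof is correct and follows the paper's argument. You reduce to Proposition~\ref{prop:hessform} to get the criterion $[\tfrac1{2\pi}\omega]\in\Lambda(P,\ssigma)$, use an open scale-invariant cone meeting every translate of a full-rank lattice in the closed case, and invoke Gromov's $h$-principle to get $\mathcal C_M=H^2_{\dR}(M;\R)$ in the open case; your remarks that $\Lambda$ depends only on $w_2(TM)$ (not on which symplectic form's frame bundle is used) and that the factor $2\pi$ is harmless because $\mathcal C_M$ is a cone make explicit two points the paper leaves implicit.
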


\begin{proof}
Treating connected components separately, assume $M$ connected. For any almost complex
structure $J$, $c_1(TM,J)$ is an integral lift of $w(P,\ssigma)=w_2(TM)$, so
$\Lambda(P,\ssigma)$ is the coset $\tfrac12c_1(TM,J)_\R+H^2(M;\Z)_\R$.

By Proposition~\ref{prop:hessform} for $\ssigma^c$, a symplectic form $\omega$ makes
$(P,\omega)$ metaplectic-c prequantizable if and only if
$[\tfrac1{2\pi}\omega]\in\Lambda(P,\ssigma)$. This gives the stated criterion.

If $M$ is closed, $H^2(M;\Z)_\R$ is a full-rank lattice in the finite-dimensional space
$H^2_{\dR}(M;\R)$, since $H^2(M;\Z)/\mathrm{tors}\otimes\R\cong H^2(M;\R)$. The symplectic cone
$\mathcal C_M$ is open, since nondegeneracy is pointwise open and $M$ is compact, and it is
scale invariant. An open scale-invariant subset of $H^2_{\dR}(M;\R)$ meets
every coset of a full-rank lattice, so the intersection is nonempty.

If $M$ is open, Gromov's h-principle carries a nondegenerate $2$-form through nondegenerate
$2$-forms to a symplectic form representing any prescribed class in $H^2_{\dR}(M;\R)$
\cite[18.2.2]{EliashbergMishachev}, \cite[Thm.~7.34]{McDuffSalamon}, so
$\mathcal C_M=H^2_{\dR}(M;\R)$ and the intersection is $\Lambda(P,\ssigma)$, which contains $\tfrac12c_1(TM,J)_\R$.
\end{proof}

The Moser trick compares different representative symplectic forms in the same cohomology
class; for prequantum systems see \cite{MirandaWeitsman}.

\section{EBK quantizability on Lagrangian submanifolds}
\label{sec:ebk}

\subsection{Hess characters and the Guillemin--Sternberg square}
\label{subsec:gssquare}

Let $\bF\subseteq\bV$ be a linear Lagrangian subspace of a symplectic vector space $(\bV,\omega_{\bV})$, and let
$\Sp_{\bF}(\bV)\le\Sp(\bV)$ be the subgroup of linear symplectomorphisms of $\bV$ preserving
$\bF$. For $g\in\Sp_{\bF}(\bV)$ the restricted map $g|_{\bF}:\bF\to\bF$ is linear and invertible,
and restriction preserves composition, so there is a group homomorphism
\begin{equation}\label{eq:inddef}
  \ind:\Sp_{\bF}(\bV)\longrightarrow\GL(\bF),\qquad \ind(g):=g|_{\bF} .
\end{equation}

Since $\omega_{\bV}$ identifies $\bF^\ast$ with $\bV/\bF$, we may choose a splitting
$\bV\cong\bF\oplus\bF^\ast$. If $g\in\ker\ind$, it is a shearing transform
$g(v):=v+g_{\bF\bF^\ast}(v_{\bF^\ast})$ for the splitting $\bF\oplus\bF^\ast$. The restriction
$g|_{\bF}$ is the identity map, and the self-dual map $g_{\bF\bF^\ast}:\bF^\ast\to\bF$ identifies with a
quadratic form in $\Sym^2\bF$. The composition in $\ker\ind$ identifies with the addition of
self-dual maps, $(gh)_{\bF\bF^\ast}=g_{\bF\bF^\ast}+h_{\bF\bF^\ast}$, and the conjugation in $\Sp_{\bF}(\bV)$ normalizing
$\ker\ind$ descends to an action of the linear isomorphisms in $\GL(\bF)$ on $\Sym^2\bF$,
described by the semidirect product decomposition
\begin{equation}\label{eq:spfsemidirect}
  \Sp_{\bF}(\bV)\;\cong\;\GL(\bF)\ltimes\Sym^2\bF ,
  \qquad \ind(g)\cdot h_{\bF\bF^\ast}=\ind(g)\circ h_{\bF\bF^\ast}\circ\ind(g)^\ast=(ghg^{-1})_{\bF\bF^\ast}.
\end{equation}
This decomposition depends on the choice of splitting $\bV\cong\bF\oplus\bF^\ast$. The kernel $\ker\ind$ is
contractible, so $\Sp_{\bF}(\bV)$ is homotopy equivalent to the orthogonal group $\Or(n)$,
where $n=\dim\bF$.
Taking the sign of $\det\circ\,\ind$ gives a map
\begin{equation}\label{eq:sgndet}
  \ori(\,\cdot\,):=\operatorname{sgn}\det(\,\cdot\,|_{\bF}):\Sp_{\bF}(\bV)\longrightarrow\mu_2
\end{equation}
that is trivial on the identity component $\Sp^+_{\bF}(\bV)$, and its induced isomorphism on the
component groups factors through $\pi_0(\Or(n))$ as
$\pi_0(\Sp_{\bF}(\bV))\cong\pi_0(\Or(n))\cong\mu_2$.

Let $\ssigma:\Mp(\bV)\to\Sp(\bV)$ be the connected Lie group double covering with kernel
$\ker\ssigma:=\{1_{\Mp(\bV)},\beps\}$. As in \eqref{eq:centralext}, let
$\Mp^c(\bV):=(\Mp(\bV)\times\U(1))/\Delta$, with associated circle extension
$\ssigma^c:\Mp^c(\bV)\to\Sp(\bV)$ and squaring character $\eta:\Mp^c(\bV)\to\U(1)$. Let
$\Mp_{\bF}(\bV):=\ssigma^{-1}(\Sp_{\bF}(\bV))$ and
$\Mp^c_{\bF}(\bV):=(\ssigma^c)^{-1}(\Sp_{\bF}(\bV))$ be the preimages of $\Sp_{\bF}(\bV)$.

\begin{lemma}[{cf.\ \cite[Lem.~5.3]{BKS}}]\label{lem:lifts}\,
\begin{enumerate}
\item\label{it:liftplus} The inclusion $\Sp^+_{\bF}(\bV)\hookrightarrow\Sp(\bV)$ lifts to a
  unique Lie group homomorphism $\Sp^+_{\bF}(\bV)\hookrightarrow\Mp(\bV)$. Its image is the
  identity component of $\Mp_{\bF}(\bV)$.
\item\label{it:noliftmp} The component group $\pi_0(\Mp_{\bF}(\bV))$ is cyclic with four
  elements, and the inclusion $\Sp_{\bF}(\bV)\hookrightarrow\Sp(\bV)$ does not lift to a Lie
  group homomorphism into $\Mp(\bV)$.
\item\label{it:liftmpc} The inclusion $\Sp_{\bF}(\bV)\hookrightarrow\Sp(\bV)$ lifts to a Lie
  group homomorphism
  $\iota^c:\Sp_{\bF}(\bV)\hookrightarrow\ker\eta^2\le\Mp^c(\bV)$ in exactly two ways.
\end{enumerate}
\end{lemma}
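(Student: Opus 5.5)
The plan is to reduce everything to maximal compact subgroups and fundamental groups, using the homotopy equivalences $\Sp_{\bF}(\bV)\simeq\Or(n)$ from \eqref{eq:spfsemidirect} and $\Sp(\bV)\simeq\U(n)$. Choose a compatible complex structure for which $\bF$ is totally real, so that $\Or(n)\subseteq\U(n)\subseteq\Sp(\bV)$ sits inside $\Sp_{\bF}(\bV)$. Then $\pi_1(\Sp(\bV))\cong\pi_1(\U(n))\cong\Z$ via $\det$, and $\Mp(\bV)$ is the connected double covering corresponding to the subgroup $2\Z$.

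For \ref{it:liftplus}, I will show that the pulled-back covering over the connected group $\Sp^+_{\bF}(\bV)\simeq\SO(n)$ is trivial. The composite $\pi_1(\SO(n))\to\pi_1(\U(n))\xrightarrow{\det}\pi_1(\U(1))$ is zero, because $\det$ is identically $1$ on $\SO(n)$. Hence the restricted double covering $\ssigma^{-1}(\Sp^+_{\bF}(\bV))\to\Sp^+_{\bF}(\bV)$ has two components. The component containing $1_{\Mp(\bV)}$ is a connected subgroup mapped isomorphically onto $\Sp^+_{\bF}(\bV)$. Its inverse gives the lift. The lift is unique because a homomorphic lift of a connected group must send $1$ to $1$, and lifts of maps out of a connected space are determined by one value. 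Its image is connected and open in $\Mp_{\bF}(\bV)$, so it is the identity component.

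For \ref{it:noliftmp}, the preimage $\Mp_{\bF}(\bV)$ is a double covering of a two-component group, and by \ref{it:liftplus} each component splits. So $\pi_0$ has four elements, with the identity component and the $\beps$-translate lying over $\Sp^+_{\bF}(\bV)$. To show cyclicity, take the reflection $r=\operatorname{diag}(-1,1,\dots,1)\in\Or(n)$. Write it as the endpoint at $t=\pi$ of the one-parameter subgroup $c(t)=\operatorname{diag}(e^{it},1,\dots,1)$ in $\U(n)$, and let $\tilde c$ be the lifted one-parameter subgroup of $\Mp(\bV)$, with $\tilde r:=\tilde c(\pi)$. Then $\tilde r^{\,2}=\tilde c(2\pi)$. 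The loop $c|_{[0,2\pi]}$ has $\det$-winding number $1$, so it generates $\pi_1(\Sp(\bV))$ and does not lift to a closed loop. Hence $\tilde c(2\pi)=\beps$. This computation is the step I expect to need the most care: one must justify that the lifted one-parameter subgroup realizes the monodromy of the generator. Consequently the class of $\tilde r$ in $\pi_0$ has order four, so $\pi_0(\Mp_{\bF}(\bV))\cong\Z_4$.

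Non-liftability follows directly. A homomorphic lift $s$ would give $s(r)^2=s(r^2)=1$. But both preimages $\tilde r$ and $\tilde r\beps$ of $r$ square to $\beps$, since $\beps$ is central of order two.

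For \ref{it:liftmpc}, fix the isomorphism $\widetilde{\ori}:\pi_0(\Mp_{\bF}(\bV))\to\mu_4$ with $\widetilde{\ori}(\tilde r)=i$. Then $\widetilde{\ori}(\beps)=-1$, and I define
\[\iota^c(g):=\big[\tilde g,\ \widetilde{\ori}(\tilde g)^{-1}\big],\qquad \tilde g\in\ssigma^{-1}(g).\]
This is independent of the choice of $\tilde g$: replacing $\tilde g$ by $\tilde g\beps$ changes the pair by $(\beps,-1)\in\Delta$. It is a homomorphism because $\widetilde{\ori}$ is a character, it covers the inclusion, and $\eta(\iota^c(g))=\widetilde{\ori}(\tilde g)^{-2}\in\mu_2$, so $\iota^c(g)\in\ker\eta^2$.

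Finally, any two lifts differ pointwise by a continuous homomorphism $\Sp_{\bF}(\bV)\to\ker\ssigma^c\cap\ker\eta^2=\{[1,\xi]:\xi^4=1\}\cong\mu_4/\mu_2\cdot\ldots$, which is a finite group. Such a homomorphism factors through $\pi_0(\Sp_{\bF}(\bV))\cong\mu_2$, so it is either trivial or given by $\ori$ composed with the central element $[1,i]$. The first option returns the same lift, and the second yields the lift built from the other identification $\widetilde{\ori}(\tilde r)=-i$. Hence there are exactly two lifts. The two lifts are genuinely distinct, because they differ on $r$ by the nontrivial element $[1,i]$.
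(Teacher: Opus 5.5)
Your proposal follows the paper's proof closely. Part (1) uses the same vanishing of $\det_\C$ on $\SO(\bF)$; you finish via the identity component of $\ssigma^{-1}(\Sp^+_{\bF}(\bV))$ instead of lifting and then checking the homomorphism property by discreteness. In part (2), your one-parameter subgroup is exactly the paper's path: since $c(t+\pi)=r\,c(t)$, the loop $c|_{[0,2\pi]}$ is the concatenation $l\ast(r\,l)$ with $l=c|_{[0,\pi]}$. The step you flag is immediate, because $\tilde c$ is a continuous lift of $c$ starting at $1$, so by uniqueness of path lifting $\tilde c(2\pi)$ is the monodromy of the generator. In part (3), your explicit formula $\iota^c(g)=[\tilde g,\widetilde{\ori}(\tilde g)^{-1}]$ is a cleaner existence proof than the paper's component-by-component definition. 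It is precisely $\ker\hat\chi^c$ for $\hat\chi=\widetilde{\ori}$, which the paper only identifies after the lemma.

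There is one genuine error, in your uniqueness count. The group $\ker\ssigma^c\cap\ker\eta^2=\{[1,\xi]:\xi\in\mu_4\}$ is isomorphic to $\mu_4$ via $\xi\mapsto[1,\xi]$, not to a quotient of it. In this group, $[1,i]$ has order four, since $[1,i]^2=[1,-1]=[\beps,1]\neq1$. So ``$\ori$ composed with $[1,i]$'' is not a homomorphism: it would send $r^2=1$ to $[1,-1]$. Read literally, your dichotomy therefore leaves only the trivial difference and would prove uniqueness, contradicting your own two lifts. A homomorphism $\pi_0(\Sp_{\bF}(\bV))\cong\Z_2\to\mu_4$ must land in $\{\pm1\}$. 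Hence the nontrivial difference is $g\mapsto[1,\ori(g)]$, with value $[1,-1]$ on $r$. This is exactly what separates your two explicit lifts: $[\tilde r,i]\,[\tilde r,-i]^{-1}=[1,-1]$, not $[1,i]$. With that correction, $|\Hom(\Z_2,\mu_4)|=2$ gives at most two lifts, and the identifications $\widetilde{\ori}(\tilde r)=\pm i$ give two distinct ones. This matches the paper's statement that two lifts differ by the trivial character or $\operatorname{sgn}\det(\,\cdot\,|_{\bF})$.
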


\begin{proof}
(\ref{it:liftplus}) Choose a compatible linear complex structure $J$ on $\bV$. Then $\Sp(\bV)$
deformation retracts to the unitary group $\U(\bV)$ of the induced Hermitian inner product, and
$\Sp^+_{\bF}(\bV)$ deformation retracts onto $\SO(\bF)=\Sp^+_{\bF}(\bV)\cap\U(\bV)$. The complex
determinant on $\U(\bV)$ induces an isomorphism
$\pi_1(\U(\bV))\to\pi_1(\U(1))$ and restricts to the trivial constant map on $\SO(\bF)$, so the
inclusion $\Sp^+_{\bF}(\bV)\hookrightarrow\Sp(\bV)$ induces the trivial map on fundamental
groups. There is therefore a unique lift $\iota:\Sp^+_{\bF}(\bV)\to\Mp(\bV)$ mapping the
identity to the identity. The continuous maps $g\mapsto\iota(g)\iota(g^{-1})$ and
$(g,h)\mapsto\iota(g)\iota(h)\iota((gh)^{-1})$ take values in the discrete group $\ker\ssigma$,
so they are constant on the unique connected component of their domains. Since they evaluate to
$1_{\Mp(\bV)}$ on the identity elements, they evaluate to $1_{\Mp(\bV)}$ everywhere. Thus $\iota$ is a Lie group
homomorphism. Its image and the translate $\beps\,\iota(\Sp^+_{\bF}(\bV))$ are disjoint, open and
connected with union $\ssigma^{-1}(\Sp^+_{\bF}(\bV))$, so each is a connected component, and the
first contains the identity.

(\ref{it:noliftmp}) Let
$\epsilon\in\Or(\bF)\setminus\SO(\bF)\subseteq\Sp_{\bF}(\bV)\setminus\Sp^+_{\bF}(\bV)$ with
$\epsilon^2=1$, so that $\det_\C(\epsilon)=-1$. Take $l:[0,1]\to\U(\bV)$ to be a path with
$l(0)=1$ and $l(1)=\epsilon$ such that $\det_\C(l(t))=e^{i\pi t}$. Then
$\det_\C\big((l\ast(\epsilon l))(t)\big)=e^{2\pi i t}$, so the concatenation $l\ast(\epsilon l)$
generates $\pi_1(\U(\bV))\cong\pi_1(\Sp(\bV))\cong\Z$. Any
$\tilde\epsilon\in\ssigma^{-1}(\epsilon)$ squares to the endpoint of the concatenation of the
lifts of $l$ and $\epsilon l$. Since $\ssigma$ is a connected double covering, that endpoint is
the nontrivial element $\beps$ of $\ker\ssigma$, so $\tilde\epsilon^2=\beps$ and
$\tilde\epsilon$ has order four in $\Mp(\bV)$. By (\ref{it:liftplus}), the covering $\ssigma$
restricts to a trivial Lie group covering over $\Sp^+_{\bF}(\bV)$, and
\[ \pi_0(\Mp_{\bF}(\bV))\;\cong\;\Mp_{\bF}(\bV)/\Sp^+_{\bF}(\bV)
   \;=\;\big\{\Sp^+_{\bF}(\bV),\ \beps\Sp^+_{\bF}(\bV),\
   \tilde\epsilon\Sp^+_{\bF}(\bV),\ \beps\tilde\epsilon\Sp^+_{\bF}(\bV)\big\} \]
is cyclic of order four. A lift $\iota:\Sp_{\bF}(\bV)\to\Mp(\bV)$ that is also a Lie group
homomorphism would send the involution $\epsilon$ to an element of $\ssigma^{-1}(\epsilon)$, all
of which have order four, so no such lifting exists.

(\ref{it:liftmpc}) Consider
\[ \ker\eta^2\;=\;\eta^{-1}(1)\sqcup\eta^{-1}(-1)
   \;=\;\big\{[\tilde g,\xi]\in\Mp^c(\bV)\;:\;\xi\in\mu_4\big\}, \]
on which $\ssigma^c$ restricts to a fourfold Lie group covering of $\Sp(\bV)$. The
nonidentity component $\eta^{-1}(-1)$ consists of elements not in $\Mp(\bV)$. The element
$[\tilde\epsilon,\pm i]$ of $\eta^{-1}(-1)$ is an involution because
$[\tilde\epsilon,\pm i]^2=[\beps,-1]=[1,1]$. Take the lift
$\iota^c:\Sp_{\bF}(\bV)\to\ker\eta^2$ mapping the identity to $1_{\Mp^c(\bV)}$ and $\epsilon$ to
$[\tilde\epsilon,\pm i]$. As in (\ref{it:liftplus}) the maps
$g\mapsto\iota^c(g)\iota^c(g^{-1})$ and
$(g,h)\mapsto\iota^c(g)\iota^c(h)\iota^c((gh)^{-1})$ take values in the discrete kernel of
$\ssigma^c|_{\ker\eta^2}$ and so are constant on connected components, and their values at $1$ and
$\epsilon$, and at $(1,1)$, $(\epsilon,1)$, $(1,\epsilon)$ and $(\epsilon,\epsilon)$, are
$1_{\ker\eta^2}$. Hence $\iota^c$ is a Lie group homomorphism. Two lifts differ by a
$\mu_4$-valued character of $\Sp_{\bF}(\bV)$, which is either trivial or
$\operatorname{sgn}\det(\,\cdot\,|_{\bF})$.
\end{proof}
There exists a \textbf{Hess character} $\chi$ making the square
\begin{equation}\label{eq:chihessdiagram}
\begin{tikzcd}[row sep=2.6em, column sep=3.2em, ampersand replacement=\&]
\Mp_{\bF}(\bV) \ar[r,"\chi"] \ar[d,"\ssigma"']
  \ar[dr, phantom, "\lrcorner", very near start]
  \& \R^\times\sqcup i\R^\times \ar[d,"\sq"] \\
\Sp_{\bF}(\bV) \ar[r,"\det(\,\cdot\,|_{\bF})"'] \& \R^\times
\end{tikzcd}
\end{equation}
a pullback of $\Z_2$-extensions \cite[Thm.~3.5]{Hess}, so that
\begin{equation}\label{eq:chihesssq}
  \chi(\tilde g)^2=\det\big(\ssigma(\tilde g)|_{\bF}\big),
  \qquad
  \chi(\beps)=-1 .
\end{equation}
The choice of $\chi$ is unique up to complex conjugation, induced by composition
with the nontrivial automorphism of either the Lie group double covering
$\Mp_{\bF}(\bV)\to\Sp_{\bF}(\bV)$ or of $\R^\times\sqcup i\R^\times\to\R^\times$. The modulus
$|\chi|=|\det(\,\cdot\,|_{\bF})|^{1/2}$ is the transition factor for
half-densities. The normalization
$\hat\chi:=\chi/|\chi|$ takes values in $\mu_4$ and
$\hat\chi(\tilde g)^2=\operatorname{sgn}\det\big(\ssigma(\tilde g)|_{\bF}\big)=\ori(\ssigma(\tilde g))$.

For the associated circle extension, there is also a \textbf{Hess character}
\begin{equation}\label{eq:chicdef}
  \chi^c([\tilde g,\xi])\;:=\;\xi\,\chi(\tilde g),
\end{equation}
making the square
\begin{equation}\label{eq:chichessdiagram}
\begin{tikzcd}[row sep=2.8em, column sep=8em, ampersand replacement=\&]
\Mp^c_{\bF}(\bV) \ar[r,"\chi^c"] \ar[d,"{(\eta,\,\ssigma^c)}"']
  \ar[dr, phantom, "\lrcorner", very near start]
  \& \C^\times \ar[d,"\sq"] \\
\U(1)\times\Sp_{\bF}(\bV) \ar[r,"{(z,g)\,\mapsto\, z\det(g|_{\bF})}"'] \& \C^\times
\end{tikzcd}
\end{equation}
a pullback of $\Z_2$-extensions \cite[Thm.~3.5, (89), (90)]{Hess}. This Hess character
satisfies
\begin{equation}\label{eq:hess89}
  \chi^c([\tilde g,\xi])^2 \;=\; \xi^2\det\big(\ssigma(\tilde g)|_{\bF}\big),
  \qquad
  \big|\chi^c([\tilde g,\xi])\big|^2 \;=\; \big|\det\big(\ssigma(\tilde g)|_{\bF}\big)\big|.
\end{equation}

The choice of lift in \eqref{eq:chichessdiagram} is unique up to precomposition with the
unique nontrivial
involutive automorphism $\alpha$ of the Lie group double covering $(\eta,\ssigma^c)$,
\begin{equation}\label{eq:gsflip}
  \alpha([\tilde g,\xi])\;:=\;\big[\tilde g,\;\ori\big(\ssigma(\tilde g)\big)\,\xi\big],
\end{equation}
given by the unique nontrivial Lie group homomorphism
$\ori\circ\pr_2:\U(1)\times\Sp_{\bF}(\bV)\to\mu_2$.

The polar decomposition gives the identities
\begin{equation}\label{eq:chisplit}
  \big|\chi^c([\tilde g,\xi])\big| = \big|\det\big(\ssigma(\tilde g)|_{\bF}\big)\big|^{1/2},
  \qquad
  \hat\chi^c([\tilde g,\xi])^2 = \xi^2\,\ori\big(\ssigma(\tilde g)\big).
\end{equation}

Over $\Sp_{\bF}(\bV)$ the circle extension $\ssigma^c:\Mp^c_{\bF}(\bV)\to\Sp_{\bF}(\bV)$ is
trivializable, because
\begin{equation}\label{eq:mpcfsplit}
  \big(\ssigma^c,\,\hat\chi^c\big):
  \Mp^c_{\bF}(\bV)\;\xrightarrow{\ \cong\ }\;\Sp_{\bF}(\bV)\times\U(1)
\end{equation}
is an isomorphism of Lie groups. The normalized Hess character $\hat\chi^c$ is
the projection to the second factor after the trivialization, so its kernel is isomorphic to
$\Sp_{\bF}(\bV)$. Since $\ker\hat\chi^c\le\Mp^c_{\bF}(\bV)\cap\ker\eta^2$, the
kernel $\ker\hat\chi^c$ identifies with a specific choice of lift of
$\Sp_{\bF}(\bV)$ in $\Mp^c(\bV)$ in Lemma~\ref{lem:lifts}(\ref{it:liftmpc}). The two choices of
Hess character, $\hat\chi^c$ and $\hat\chi^c\circ\alpha$, give
two kernels, each identifying with a lift of $\Sp_{\bF}(\bV)$ to $\Mp^c(\bV)$ in
Lemma~\ref{lem:lifts}(\ref{it:liftmpc}). The intersection of $\Mp_{\bF}(\bV)$ with either kernel
is the unique lift of $\Sp^+_{\bF}(\bV)$ in $\Mp_{\bF}(\bV)$ of
Lemma~\ref{lem:lifts}(\ref{it:liftplus}), so that each $\ker\hat\chi^c$ coset in
$\Mp^c_{\bF}(\bV)$ contains at most one connected component of $\Mp_{\bF}(\bV)$.

The isomorphism \eqref{eq:mpcfsplit} descends to an isomorphism of Lie groups
\begin{equation}\label{eq:mpcfdescend}
  \Mp^c_{\bF}(\bV)/\Sp^+_{\bF}(\bV)\;\cong\;
  \big(\big(\Mp_{\bF}(\bV)/\Sp^+_{\bF}(\bV)\big)\times\U(1)\big)\big/\Delta
  \;\cong\;(\mu_4\times\U(1))/\Delta\;\cong\;\mu_2\times\U(1),
\end{equation}
where the last isomorphism maps $(\xi,z)\mapsto(\xi^2,\xi z)$. The first component of the image
in $\mu_2\times\U(1)$ is
$\hat\chi(\tilde g)^2=\operatorname{sgn}\det(\ssigma(\tilde g)|_{\bF})$, and
reads whether $[\tilde g,\xi]$ lies over the orientation preserving or the orientation
reversing component of $\Sp_{\bF}(\bV)/\Sp^+_{\bF}(\bV)$. The automorphism $\alpha$ descends to
an automorphism of the circle extension
$\Mp^c_{\bF}(\bV)/\Sp^+_{\bF}(\bV)\to\Sp_{\bF}(\bV)/\Sp^+_{\bF}(\bV)$ that fixes the circle over
the identity element and rotates the circle over the nonidentity element by $\pi$.

The isomorphism \eqref{eq:mpcfdescend} further restricts to
\begin{align*}
  \Mp_{\bF}(\bV)/\Sp^+_{\bF}(\bV)
  &\;\cong\;\big(\big(\Mp_{\bF}(\bV)/\Sp^+_{\bF}(\bV)\big)\times\mu_2\big)\big/\Delta
   \;\cong\;(\mu_4\times\mu_2)/\Delta \\
  &\;\cong\;\{(1,1),\,(1,-1),\,(-1,i),\,(-1,-i)\}\;\le\;\mu_2\times\U(1) ,
\end{align*}
and the automorphism $\alpha$ descended to \eqref{eq:mpcfdescend} extends the nontrivial
automorphism of the Lie group double covering $\Mp_{\bF}(\bV)\to\Sp_{\bF}(\bV)$.

The normalized Hess character $\hat\chi^c$ descends to a Lie group double
covering $\Mp^c_{\bF}(\bV)/\Sp^+_{\bF}(\bV)\to\U(1)$, which factors through the quotient
$\Mp^c_{\bF}(\bV)/\Sp^+_{\bF}(\bV)\to
\Mp^c_{\bF}(\bV)/\ker\hat\chi^c\cong\U(1)$. This double covering maps
$\Mp_{\bF}(\bV)/\Sp^+_{\bF}(\bV)$ injectively, because each
$\ker\hat\chi^c$ coset in $\Mp^c_{\bF}(\bV)$ contains at most one connected
component of $\Mp_{\bF}(\bV)$. The image of $\Mp_{\bF}(\bV)/\Sp^+_{\bF}(\bV)$ can be viewed as
the \textbf{Guillemin--Sternberg square} \cite[V\S4, p.~260]{GuilleminSternberg}, with the four
vertices
corresponding to the four connected components of $\Mp_{\bF}(\bV)/\Sp^+_{\bF}(\bV)$, inscribed
in $\Mp^c_{\bF}(\bV)/\ker\hat\chi^c$. The choice of Hess character determines
two circles $\Mp^c_{\bF}(\bV)/\ker\hat\chi^c$ and
$\Mp^c_{\bF}(\bV)/\ker(\hat\chi^c\circ\alpha)$ inscribing the
Guillemin--Sternberg square.

\begin{figure}[htbp]\centering
\begin{tikzpicture}[>=stealth,font=\scriptsize,
   dot/.style={circle,fill,inner sep=1.3pt},
   odot/.style={circle,fill,inner sep=1.3pt},
   ell/.style={black},
   starmark/.style={star,star points=5,star point ratio=2.1,fill,minimum size=6pt,
            inner sep=0pt,outer sep=0pt},
   sel/.style={gray,dashed}]
  \def\cx{4.8}\def\ry{3.3}
  % ---------- row 1
  \begin{scope}
    \draw[ell] (0,0) ellipse [x radius=0.85, y radius=0.42];
    \node[dot,label={[inner sep=2pt]left:$1$}] at (0.85,0) {}; \node[dot] at (-0.85,0) {};
  \end{scope}
  \begin{scope}[shift={(\cx,0)}]
    \draw[ell] (0,0) ellipse [x radius=0.42, y radius=0.85];
    \node[dot] at (0,0.85) {}; \node[dot] at (0,-0.85) {};
    \node[starmark] at (0.42,0) {};
    \draw[->,gray] (-0.241,0.924)
      arc[start angle=115, end angle=147, x radius=0.57, y radius=1.02];
    \node[gray,font=\scriptsize] at (-0.604,1.132) {$\pi$};
    \draw[->,gray] (0.241,-0.924)
      arc[start angle=295, end angle=327, x radius=0.57, y radius=1.02];
    \node[gray,font=\scriptsize] at (0.604,-1.132) {$\pi$};
  \end{scope}
  \draw[sel] (0.85,0) -- node[below,pos=0.4,text=black,inner sep=3pt]
    {$\cong\Sp_{\bF}(\bV)/\Sp^+_{\bF}(\bV)$} (\cx+0.42,0);
  \node[draw,rounded corners,inner sep=10pt,fit={(-1.3,-1.35) (\cx+1.05,1.35)},
        label={above:$\Mp^c_{\bF}(\bV)/\Sp^+_{\bF}(\bV)$}] (topbox) {};
  \node at ($(topbox.south)+(0,0.34)$) {$\Mp_{\bF}(\bV)/\Sp^+_{\bF}(\bV)$};
  \begin{scope}[shift={(2*\cx+1.1,0)}]
    \fill[black!8] (1.05,0) -- (0,1.05) -- (-1.05,0) -- (0,-1.05) -- cycle;
    \draw[ell] (0,0) circle [radius=1.05];
    \node[dot] at (0,1.05) {};  \node[dot] at (0,-1.05) {};
    \node[dot,label={[inner sep=2pt]right:$1$}] at (1.05,0) {};  \node[dot] at (-1.05,0) {};
  \end{scope}
  \draw[->] ($(topbox.east)+(0.15,0)$) -- node[above]{$\hat\chi^c$}
       node[below]{$2{:}1$} (2*\cx+1.1-1.4,0);
  % ---------- row 2
  \node[odot,label={[inner sep=2pt]left:$1$}] (b1) at (0,-\ry) {};
    \node[odot] (b2) at (\cx,-\ry) {};
    \node[below=6pt] (l1) at (b1) {orientation preserving};
    \node[below=6pt] (l2) at (b2) {orientation reversing};
    \node[draw,rounded corners,inner sep=5pt,
      fit={([yshift=11pt]topbox.west |- b1) ([yshift=11pt]topbox.east |- b1)
           (l1) (l2)}] (botbox) {};
  \node[below=3pt] at (botbox.south) {$\Sp_{\bF}(\bV)/\Sp^+_{\bF}(\bV)$};
  \node[align=center] at (2*\cx+1.1,-1.85)
    {The Guillemin--Sternberg square\\ inscribed in
     $\Mp^c_{\bF}(\bV)/\Sp_{\bF}(\bV)\cong\U(1)$};
  % ---------- arrows
  \draw[sel] (b1) -- (b2);
  \draw[->,shorten <=5pt,shorten >=5pt] (topbox.south -| b1) -- (botbox.north -| b1);
  \draw[->,shorten <=5pt,shorten >=5pt] (topbox.south -| b2) -- (botbox.north -| b2);
\end{tikzpicture}
\caption{Inscribing the Guillemin--Sternberg square. The quotient group
$\Mp^c_{\bF}(\bV)/\Sp^+_{\bF}(\bV)\cong\Z_2\times\U(1)$ consists of two circles, one for each
connected component of $\Sp_{\bF}(\bV)$. The four black dots in the upper box represent the
four connected components of $\Mp_{\bF}(\bV)$. The character $\hat\chi^c$
descends to $\Mp^c_{\bF}(\bV)/\Sp^+_{\bF}(\bV)$, and its descent factors through the
two-sheeted covering
$\Mp^c_{\bF}(\bV)/\Sp^+_{\bF}(\bV)\to
\Mp^c_{\bF}(\bV)/\ker\hat\chi^c\xrightarrow{\,\cong\,}\U(1)$,
mapping the subgroup $\Mp_{\bF}(\bV)/\Sp^+_{\bF}(\bV)$ injectively. The kernel
$\ker\hat\chi^c$ is isomorphic to $\Sp_{\bF}(\bV)$, and its two components are
joined by a dashed line, where the star marks the component not in $\Mp_{\bF}(\bV)$. The
involutive automorphism of \eqref{eq:gsflip} descends to an order two rotation of the
nonidentity component of $\Mp^c_{\bF}(\bV)/\Sp^+_{\bF}(\bV)$, which restricts to the
nontrivial automorphism of $\Mp_{\bF}(\bV)/\Sp^+_{\bF}(\bV)$, and exchanges the two
isomorphisms $\pi_0(\Mp_{\bF}(\bV))\cong\mu_4$, the two normalized Hess characters, the two
lifts of $\Sp_{\bF}(\bV)$ to $\Mp^c(\bV)$, the two orientations of the Guillemin--Sternberg
square \cite[V\S4, p.~260]{GuilleminSternberg}, and
the two choices of circles that inscribe it.}
\label{fig:gssquare}
\end{figure}
\subsection{Character identities}
\label{subsec:characters}

From Lemma~\ref{lem:lifts} we know $\pi_0(\Mp_{\bF}(\bV))\cong\mu_4$. Viewing $\mu_4$ as a
subgroup of $\U(1)$, we denote a choice of character
\[ \Mp_{\bF}(\bV)\longrightarrow\Mp_{\bF}(\bV)/\Sp^+_{\bF}(\bV)
   \xrightarrow{\ \cong\ }\pi_0(\Mp_{\bF}(\bV))\xrightarrow{\ \cong\ }\mu_4 \]
by $\widetilde{\ori}$, because the square
\begin{equation}\label{eq:sqrtordiagram}
\begin{tikzcd}[row sep=2.6em, column sep=3.2em, ampersand replacement=\&]
\Mp_{\bF}(\bV) \ar[r,"\widetilde{\ori}"] \ar[d,"\ssigma"']
  \ar[dr, phantom, "\lrcorner", very near start]
  \& \mu_4 \ar[d,"\sq"] \\
\Sp_{\bF}(\bV) \ar[r,"\ori"'] \& \mu_2
\end{tikzcd}
\end{equation}
commutes and is a pullback of $\Z_2$-extensions.

Normalizing by the modulus gives a canonical bijection between the two lifts of
$\det(\,\cdot\,|_{\bF})$ in \eqref{eq:chihessdiagram} and the two lifts of $\ori$ in
\eqref{eq:sqrtordiagram}. We denote the corresponding lifts so that
\begin{equation}\label{eq:chisqrtor}
  \hat\chi\;=\;\widetilde{\ori} .
\end{equation}

\begin{lemma}\label{lem:chicrestricts}
The normalized Hess character $\hat\chi^c$ restricts to $\hat\chi=\widetilde{\ori}$ on
$\ker\eta|_{\Mp^c_{\bF}(\bV)}\cong\Mp_{\bF}(\bV)$.
\end{lemma}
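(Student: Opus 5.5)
First I will identify $\ker\eta|_{\Mp^c_{\bF}(\bV)}$ with $\Mp_{\bF}(\bV)$ explicitly. An element $[\tilde g,\xi]\in\Mp^c_{\bF}(\bV)$ lies in $\ker\eta$ exactly when $\xi^2=1$, that is, $\xi=\pm1$. Using the $\Delta$-relation $[\tilde g,-1]=[\tilde g\beps,1]$, every such element has a unique representative of the form $[\tilde g',1]$ with $\tilde g'\in\Mp_{\bF}(\bV)$. So the map $\tilde g\mapsto[\tilde g,1]$ is an isomorphism $\Mp_{\bF}(\bV)\to\ker\eta|_{\Mp^c_{\bF}(\bV)}$, and it is the inclusion $\Mp(\bV)\hookrightarrow\Mp^c(\bV)$ restricted to preimages of $\Sp_{\bF}(\bV)$.

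Next I will evaluate $\hat\chi^c$ on this subgroup. By the definition \eqref{eq:chicdef}, $\chi^c([\tilde g,1])=\chi(\tilde g)$. Normalizing by the modulus then gives
\[
  \hat\chi^c([\tilde g,1])=\chi(\tilde g)/|\chi(\tilde g)|=\hat\chi(\tilde g).
\]
By the convention \eqref{eq:chisqrtor}, $\hat\chi$ equals $\widetilde{\ori}$, which proves the restriction statement.

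Well-definedness needs one check. The representative $[\tilde g,\xi]$ is only determined up to $(\tilde g,\xi)\mapsto(\tilde g\beps,-\xi)$. By \eqref{eq:chihesssq}, $\chi(\tilde g\beps)=-\chi(\tilde g)$, so $\xi\chi(\tilde g)$ is invariant under this change. Hence $\chi^c$ is a well-defined function, and the computation above does not depend on the chosen representative.

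The main obstacle is bookkeeping rather than substance. The identification $\hat\chi=\widetilde{\ori}$ in \eqref{eq:chisqrtor} is a matching of choices, so I must confirm that the choice of $\hat\chi^c$, and hence of $\chi$ through \eqref{eq:chicdef}, is the one used to fix $\widetilde{\ori}$. If instead $\hat\chi^c$ is given abstractly as a lift in \eqref{eq:chichessdiagram}, I will argue as follows. Its restriction to $\ker\eta\cong\Mp_{\bF}(\bV)$ squares to $\det(\cdot|_{\bF})$ up to positive scaling, and it sends $\beps=[\beps,1]=[1,-1]$ to $-1$. So it is one of the two Hess characters $\chi$, and the two choices are exchanged compatibly by $\alpha$ of \eqref{eq:gsflip}. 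We then use that choice of $\chi$ to define $\widetilde{\ori}$.
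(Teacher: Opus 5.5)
Your proof is correct and is essentially the paper's argument. You identify $\ker\eta|_{\Mp^c_{\bF}(\bV)}$ with $\{[\tilde g,1]:\tilde g\in\Mp_{\bF}(\bV)\}$ via $[\tilde g,-1]=[\tilde g\beps,1]$, read off $\hat\chi^c([\tilde g,1])=\hat\chi(\tilde g)$ from \eqref{eq:chicdef}, and conclude with the convention \eqref{eq:chisqrtor}; your well-definedness check and your remark that the two choices are exchanged by $\alpha$ are harmless extras.
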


\begin{proof}
Equation \eqref{eq:chicdef} gives $\hat\chi^c([\tilde g,1])=\hat\chi(\tilde g)$, and
$\ker\eta|_{\Mp^c_{\bF}(\bV)}=\{[\tilde g,\xi]:\tilde g\in\Mp_{\bF}(\bV),\ \xi\in\mu_2\}
=\{[\tilde g,1]:\tilde g\in\Mp_{\bF}(\bV)\}\cong\Mp_{\bF}(\bV)$, since
$[\tilde g,-1]=[\tilde g\beps,1]$.
\end{proof}

In \cite{BKS,GuilleminSternberg} a character for the metalinear group is constructed from the
fact that $\GL(\bF\otimes\C)$ is connected with $\pi_1(\GL(\bF\otimes\C))\cong\Z$. There is
therefore a unique connected Lie group double covering $\ML(\bF\otimes\C)$ of
$\GL(\bF\otimes\C)$, realized as the fibre product of $\det_\C$ with $\sq:\C^\times\to\C^\times$,
and it comes with a unique lift $\det_\C^{1/2}:\ML(\bF\otimes\C)\to\C^\times$. The metalinear
group $\ML(\bF)$ is the preimage of $\GL(\bF)$ in $\ML(\bF\otimes\C)$, and
$\chi_{\mathrm{BKS}}$ is the restriction of $\det_\C^{1/2}$ to it. It fits in the pullback
\begin{equation}\label{eq:mldiagram}
\begin{tikzcd}[row sep=2.6em, column sep=3.2em, ampersand replacement=\&]
\ML(\bF) \ar[r,"\chi_{\mathrm{BKS}}"] \ar[d]
  \ar[dr, phantom, "\lrcorner", very near start]
  \& \R^\times\sqcup i\R^\times \ar[d,"\sq"] \\
\GL(\bF) \ar[r,"\det"'] \& \R^\times
\end{tikzcd}
\end{equation}
so it can also be described as the fibre product
\[ \ML(\bF)\;=\;\{(g,z)\in\GL(\bF)\times\C^\times:\ z^2=\det g\},
   \qquad \chi_{\mathrm{BKS}}(g,z)=z. \]
The condition $z^2=\det g$ places $z$ in $\R^\times\sqcup i\R^\times$, so that $\ML(\bF)$ has
four connected components \cite[p.~253]{GuilleminSternberg}.

\begin{lemma}\label{lem:indtilde}
There exists a homomorphism $\widetilde{\ind}$ making
\begin{equation}\label{eq:indlift}
\begin{tikzcd}[row sep=2.6em, column sep=3.2em, ampersand replacement=\&]
\Mp_{\bF}(\bV) \ar[r,"\widetilde{\ind}"] \ar[d,"\ssigma"']
  \& \ML(\bF) \ar[d] \\
\Sp_{\bF}(\bV) \ar[r,"\ind"'] \& \GL(\bF)
\end{tikzcd}
\end{equation}
commute, with $\chi_{\mathrm{BKS}}\circ\widetilde{\ind}=\chi$. The normalized characters satisfy
$\hat\chi_{\mathrm{BKS}}\circ\widetilde{\ind}=\widetilde{\ori}$.
\end{lemma}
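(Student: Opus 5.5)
The plan is to define $\widetilde{\ind}$ directly through the fibre product description of $\ML(\bF)$. Set
\[
  \widetilde{\ind}(\tilde g)\;:=\;\big(\ind(\ssigma(\tilde g)),\ \chi(\tilde g)\big)\in\GL(\bF)\times\C^\times .
\]
By \eqref{eq:chihesssq} we have $\chi(\tilde g)^2=\det(\ssigma(\tilde g)|_{\bF})=\det\ind(\ssigma(\tilde g))$, so this pair satisfies $z^2=\det g$ and therefore lies in $\ML(\bF)=\{(g,z):z^2=\det g\}$. Both components are Lie group homomorphisms: $\ind\circ\ssigma$ is a composite of homomorphisms, and $\chi$ is a character by \cite[Thm.~3.5]{Hess}. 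Since the group law on the fibre product is componentwise, $\widetilde{\ind}$ is a homomorphism, and it is smooth.

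Commutativity of \eqref{eq:indlift} is then immediate, because the projection $\ML(\bF)\to\GL(\bF)$ is the first component. The identity $\chi_{\mathrm{BKS}}\circ\widetilde{\ind}=\chi$ also holds by construction, since $\chi_{\mathrm{BKS}}(g,z)=z$. The abstract content is the universal property of the pullback square \eqref{eq:mldiagram}: the pair of maps $(\ind\circ\ssigma,\chi)$ into $\GL(\bF)$ and $\R^\times\sqcup i\R^\times$ agrees after composing to $\R^\times$, by \eqref{eq:chihessdiagram}. So the lemma amounts to pasting the two pullback squares.

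For the normalized statement, divide by moduli. The modulus $|\chi_{\mathrm{BKS}}\circ\widetilde{\ind}|$ equals $|\chi|$, so
\[
  \hat\chi_{\mathrm{BKS}}\circ\widetilde{\ind}=\chi/|\chi|=\hat\chi ,
\]
and $\hat\chi=\widetilde{\ori}$ by the convention \eqref{eq:chisqrtor}.

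The only point that needs care is this convention. The choice of $\chi$ is unique only up to complex conjugation, and $\widetilde{\ori}$ was chosen to match $\hat\chi$ through the modulus normalization. So I must verify that $\hat\chi_{\mathrm{BKS}}$ is composed with the $\widetilde{\ind}$ built from the \emph{same} $\chi$, rather than from an a priori fixed identification of $\pi_0(\ML(\bF))$. If one instead fixes $\chi_{\mathrm{BKS}}$ first, the other lift $\widetilde{\ind}\circ(\text{deck involution})$ would give $\overline{\chi}$. I would handle this by stating that $\widetilde{\ind}$ is determined by, and paired with, the choice of $\chi$. This is exactly the coherent choice of fourth root of unity mentioned in the introduction.
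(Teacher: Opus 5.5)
Your proposal is correct and follows the paper's proof. The paper also defines $\widetilde{\ind}(\tilde g)=(\ind(\ssigma(\tilde g)),\chi(\tilde g))$, uses \eqref{eq:chihesssq} to land in the fibre product $\ML(\bF)$, and reads off the commuting square and $\chi_{\mathrm{BKS}}\circ\widetilde{\ind}=\chi$ componentwise. For the normalized identity the paper cites Lemma~\ref{lem:chicrestricts}, while you appeal directly to the convention \eqref{eq:chisqrtor} that $\hat\chi=\widetilde{\ori}$; this is the same content.
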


\begin{proof}
Consider the map
\[ \widetilde{\ind}(\tilde g)\;:=\;\big(\ind(\ssigma(\tilde g)),\,\chi(\tilde g)\big). \]
By \eqref{eq:chihesssq} its components satisfy
$\chi(\tilde g)^2=\det\big(\ind(\ssigma(\tilde g))\big)$, so the image of $\widetilde{\ind}$
lies in $\ML(\bF)$. Both components are homomorphisms, the square commutes by construction, and
$\chi_{\mathrm{BKS}}\circ\widetilde{\ind}=\chi$ because $\chi_{\mathrm{BKS}}(g,z)=z$. Taking
normalizations and applying Lemma~\ref{lem:chicrestricts} gives the last identity.
\end{proof}

Let
\begin{equation}\label{eq:tautbundle}
  \tau_{\bV}\;:=\;\{(v,\Lambda):v\in\Lambda\}\;\subseteq\;\bV\times\Lag(\bV)
\end{equation}
be the tautological bundle over the Lagrangian Grassmannian, and let
$\mathfrak B(\tau_{\bV})$ be its frame bundle, a principal $\GL(\bF)$-bundle. Since $\omega_{\bV}$
identifies $\bV/\tau_{\bV}$ with $\tau_{\bV}^\ast$, we have
$\tau_{\bV}\oplus\tau_{\bV}^\ast\cong\Lag(\bV)\times\bV$, and the degree two part of
$w(\tau_{\bV})^2=w(\Lag(\bV)\times\bV)=1$ gives $w_1(\tau_{\bV})^2=0$. So
$\mathfrak B(\tau_{\bV})$ lifts to a principal $\ML(\bF)$-bundle
$\widetilde{\mathfrak B}(\tau_{\bV})$ \cite[p.~259]{GuilleminSternberg}, the bundle of
\emph{metalinear frames} \cite{BKS,Blattner,BlattnerNonReal}.

\begin{theorem}\label{thm:threeways}
For a consistent choice of primitive fourth root of unity in \eqref{eq:mldiagram} and
\eqref{eq:chicdef}, there are isomorphisms of flat principal $\U(1)$-bundles over the
Lagrangian Grassmannian $\Lag(\bV)$,
\begin{equation}\label{eq:threeways}
  \Lag(\bV;\mu_4)\times_{\mu_4}\U(1)
  \;\cong\;\widetilde{\mathfrak B}(\tau_{\bV})\times_{\ML(\bF)}\U(1)
  \;\cong\;\Mp^c(\bV)\times_{\Mp^c_{\bF}(\bV)}\U(1),
\end{equation}
the last two associated along $\hat\chi_{\mathrm{BKS}}$ and $\hat\chi^c$. Their holonomy over
the generator of $\pi_1(\Lag(\bV))$ is a primitive fourth root of unity.
\end{theorem}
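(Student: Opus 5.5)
The plan is to realize all three bundles as bundles associated to a single principal bundle over $\Lag(\bV)$, and then compare the characters, following the idea sketched in \S\ref{subsec:lagquant}. Use $\Mp(\bV)\to\Mp(\bV)/\Mp_{\bF}(\bV)\cong\Sp(\bV)/\Sp_{\bF}(\bV)\cong\Lag(\bV)$ as a principal $\Mp_{\bF}(\bV)$-bundle.

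First, identify $\Lag(\bV;\mu_4)$ with $\Mp(\bV)/\Sp^+_{\bF}(\bV)=\Mp(\bV)\times_{\Mp_{\bF}(\bV)}\pi_0(\Mp_{\bF}(\bV))$. This is a connected fourfold covering of $\Lag(\bV)$, since $\Mp(\bV)$ is connected and $\pi_0(\Mp_{\bF}(\bV))$ is cyclic of order four by Lemma~\ref{lem:lifts}. Through $\widetilde{\ori}$ it is a principal $\mu_4$-bundle, so
\[
\Lag(\bV;\mu_4)\times_{\mu_4}\U(1)\cong\Mp(\bV)\times_{\widetilde{\ori}}\U(1).
\]

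Second, for the half-form bundle, observe that the frame bundle satisfies $\mathfrak B(\tau_{\bV})\cong\Sp(\bV)\times_{\Sp_{\bF}(\bV),\ind}\GL(\bF)$. Indeed, $g\mapsto g|_{\bF}$ is a frame of $g\bF$. Lemma~\ref{lem:indtilde} then gives a metalinear structure $\Mp(\bV)\times_{\widetilde{\ind}}\ML(\bF)$ lifting it. I take $\widetilde{\mathfrak B}(\tau_{\bV})$ to be this lift. If it were a different lift, it would differ by a class in $H^1(\Lag(\bV);\Z_2)$, and the consistent choice of fourth root has to absorb this. I expect this identification of the lift to be the main subtle point. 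Given it, the bundle $\widetilde{\mathfrak B}(\tau_{\bV})\times_{\hat\chi_{\mathrm{BKS}}}\U(1)$ is, by associativity of associated bundles, $\Mp(\bV)\times_{\hat\chi_{\mathrm{BKS}}\circ\widetilde{\ind}}\U(1)$.

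Third, since $\Mp(\bV)\times_{\Mp_{\bF}(\bV)}\Mp^c_{\bF}(\bV)\cong\Mp^c(\bV)$, we get
\[
\Mp^c(\bV)\times_{\hat\chi^c}\U(1)\cong\Mp(\bV)\times_{\hat\chi^c|_{\Mp_{\bF}(\bV)}}\U(1).
\]
The character identities \eqref{eq:chiagree}, i.e.\ Lemma~\ref{lem:chicrestricts}, Lemma~\ref{lem:indtilde} and \eqref{eq:chisqrtor}, show that all three characters of $\Mp_{\bF}(\bV)$ coincide. This yields the isomorphisms. Here the "consistent choice" is exactly the matching of $\hat\chi$ with $\widetilde{\ori}$ and the choice of $\chi$ in \eqref{eq:chicdef}.

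Flatness holds because each character factors through $\pi_0(\Mp_{\bF}(\bV))$, a discrete group, so the bundles are associated to a covering. For the holonomy, lift a generator of $\pi_1(\Lag(\bV))\cong\Z$ to a path in $\Mp(\bV)$ from $1$ to some element of $\Mp_{\bF}(\bV)$. Using the loop $t\mapsto l(t)\bF$ from the proof of Lemma~\ref{lem:lifts}(\ref{it:noliftmp}), the endpoint is $\tilde\epsilon$, whose class generates $\pi_0\cong\mu_4$. Its image under $\widetilde{\ori}$ is therefore $\pm i$. It remains to check that this loop generates $\pi_1$; this follows from the determinant-squared description of $\Lag(\bV)\cong\U(n)/\Or(n)$.
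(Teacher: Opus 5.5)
Your proposal is correct and follows essentially the same route as the paper: you realize all three bundles as associated to the single principal $\Mp_{\bF}(\bV)$-bundle $\Mp(\bV)\to\Lag(\bV)$, identify the intermediate factors by extension of structure group, match the characters via \eqref{eq:chiagree}, and get flatness because the characters factor through $\pi_0(\Mp_{\bF}(\bV))$. The differences are minor. For the holonomy, you lift the loop $l(t)\bF$ explicitly to a path ending at $\tilde\epsilon$, whereas the paper only uses that the connected fourfold covering has surjective monodromy $\Z\to\mu_4$. You also make explicit which metalinear lift $\widetilde{\mathfrak B}(\tau_{\bV})$ must be, a point the paper leaves implicit in the phrase ``consistent choice''.
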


\begin{proof}
Take the principal $\Mp_{\bF}(\bV)$-bundle $\Mp(\bV)\to\Lag(\bV)\cong\Mp(\bV)/\Mp_{\bF}(\bV)$. A
composition of associated bundle constructions is the associated bundle of the composition, so
the three factorizations of \eqref{eq:threecharacters} give
\begin{align*}
  \Mp(\bV)\times_{\Mp_{\bF}(\bV)}\U(1)
  &\;\cong\;\big(\Mp(\bV)\times_{\widetilde{\ori}}\mu_4\big)\times_{\mu_4}\U(1) \\
  &\;\cong\;\big(\Mp(\bV)\times_{\widetilde{\ind}}\ML(\bF)\big)
     \times_{\hat\chi_{\mathrm{BKS}}}\U(1) \\
  &\;\cong\;\big(\Mp(\bV)\times_{\Mp_{\bF}(\bV)}\Mp^c_{\bF}(\bV)\big)
     \times_{\hat\chi^c}\U(1) .
\end{align*}
For a principal $G$-bundle, taking the associated bundle along a character is the same as taking
the orbit space by its kernel \cite[Prop.~5.5]{KobayashiNomizu}. The intermediate factor of the
first line is therefore
$\Mp(\bV)\times_{\widetilde{\ori}}\mu_4\cong\Mp(\bV)/\Sp^+_{\bF}(\bV)\cong\Lag(\bV;\mu_4)$, since the
kernel of $\widetilde{\ori}$ is the identity component of $\Mp_{\bF}(\bV)$, which by
Lemma~\ref{lem:lifts}(\ref{it:liftplus}) is the lift of $\Sp^+_{\bF}(\bV)$. The intermediate
factors of the second and third lines are
\[ \Mp(\bV)\times_{\widetilde{\ind}}\ML(\bF)\;\cong\;\widetilde{\mathfrak B}(\tau_{\bV}),
   \qquad
   \Mp(\bV)\times_{\Mp_{\bF}(\bV)}\Mp^c_{\bF}(\bV)\;\cong\;\Mp^c(\bV), \]
by extension of structure groups.

They are flat because $\widetilde{\ori}$ is constant on connected components, so the transition
functions are locally constant. The holonomy along a loop is the monodromy of the fourfold
covering $\Lag(\bV;\mu_4)\to\Lag(\bV)$ along that loop. Since this covering is connected, the
monodromy $\pi_1(\Lag(\bV))\cong\Z\to\mu_4$ is surjective, so it sends a generator of
$\pi_1(\Lag(\bV))$ to a generator of $\mu_4$, which is a primitive fourth root of unity.
\end{proof}

\subsection{Circle bundles over the Lagrangian Grassmannian bundle}
\label{subsec:ebklag}

Let $(M,\omega)$ be a metaplectic-c prequantizable symplectic manifold
(Corollary~\ref{cor:Pc}), and let $(P^c,\bu,\gamma)$ be a metaplectic-c prequantization
of $M$. Choose a normalized Hess character $\hat\chi^c$ and identify $\Sp_{\bF}(\bV)$ with a subgroup of
$\Mp^c_{\bF}(\bV)$ by its kernel, using Lemma~\ref{lem:lifts}(\ref{it:liftmpc}). Then we can take
associated bundles
\begin{equation}\label{eq:globaltower}
\begin{aligned}
  \Lag(TM) \;&:=\;
     P^c\times_{\Mp^c(\bV)}\big(\Mp^c(\bV)/\Mp^c_{\bF}(\bV)\big)
    &&\cong\; P\times_{\Sp(\bV)}\Lag(\bV), \\
  Y \;&:=\; P^c\times_{\Mp^c_{\bF}(\bV)}\big(\Mp^c_{\bF}(\bV)/\Sp_{\bF}(\bV)\big)
    &&\cong\; P^c\times_{\hat\chi^c}\U(1).
\end{aligned}
\end{equation}
By \cite[Prop.~5.5]{KobayashiNomizu}, $\Lag(TM)\cong P^c/\Mp^c_{\bF}(\bV)$ and
$Y\cong P^c/\Sp_{\bF}(\bV)$, so that $Y\to\Lag(TM)$ is a principal
$\Mp^c_{\bF}(\bV)/\Sp_{\bF}(\bV)\cong\U(1)$-bundle. Let $\pi:\Lag(TM)\to M$ be the projection and
$\tau_{TM}$ be its tautological bundle.

\begin{proposition}\label{prop:gammadescends}
The central connection $\gamma$ descends to a principal $\U(1)$-connection $\tilde\gamma$ on
$Y\to\Lag(TM)$, whose curvature form is $\pi^\ast\omega$, and its holonomy on the generators of
each $\pi_1(\Lag(T_xM))$ is a primitive fourth root of unity.
\end{proposition}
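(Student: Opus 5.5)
The plan is to use the identification $Y\cong P^c/\Sp_{\bF}(\bV)$ from \eqref{eq:globaltower}, where $\Sp_{\bF}(\bV)$ is embedded in $\Mp^c_{\bF}(\bV)$ as $K:=\ker\hat\chi^c$. We show that $\gamma$ is basic for the principal $K$-bundle $P^c\to Y$, so that $\gamma=q^\ast\tilde\gamma$ for a unique $1$-form $\tilde\gamma$ on $Y$. Here $q:P^c\to Y$ is the quotient map.

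\textbf{Basicness.} First, $\gamma$ is $K$-invariant: by Definition~\ref{def:centralconnection}(1), $R_g^\ast\gamma=\Ad(g^{-1})\gamma=\gamma$, since $\u(1)$ is abelian. Second, $\gamma$ must vanish on fundamental fields $X_{\bxi}$ with $\bxi$ in the Lie algebra of $K$. By Definition~\ref{def:centralconnection}(2), $\gamma(X_{\bxi})=\tfrac12\dd_1\eta(\bxi)$. By Lemma~\ref{lem:lifts}(\ref{it:liftmpc}), $K\le\ker\eta^2$, so $\eta$ takes values in $\mu_2$ on $K$ and $\dd_1\eta$ vanishes on $\operatorname{Lie}(K)$.

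\textbf{Connection property.} Next we check that $\tilde\gamma$ is a principal $\U(1)$-connection for the residual action of $\Mp^c_{\bF}(\bV)/K\cong\U(1)$, the isomorphism being given by $\hat\chi^c$. Invariance under the residual action follows from the invariance of $\gamma$. For the vertical normalization, take $\bxi\in\operatorname{Lie}(\Mp^c_{\bF}(\bV))$ and compare $\tfrac12\dd_1\eta(\bxi)$ with $\dd_1\hat\chi^c(\bxi)$. Since $\hat\chi^c$ is a homomorphism into $\U(1)$, we have
\[
  2\,\dd_1\hat\chi^c=\dd_1\big((\hat\chi^c)^2\big)=\dd_1\big(\eta\cdot(\ori\circ\ssigma^c)\big)=\dd_1\eta ,
\]
using \eqref{eq:chisplit} and the fact that $\ori$ is locally constant. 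Hence $\gamma(X_{\bxi})=\dd_1\hat\chi^c(\bxi)$, which is exactly the Maurer--Cartan normalization for the $\U(1)$-bundle $Y=P^c\times_{\hat\chi^c}\U(1)$. This is the step I expect to require the most care: one must keep track of the identification $\u(1)\cong i\R$ and of the factor $\tfrac12$, and confirm that the chosen $\hat\chi^c$, rather than $\hat\chi^c\circ\alpha$, is consistent. The computation above indicates that both choices work, since they have the same differential.

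\textbf{Curvature.} Since $q$ is a surjective submersion, $q^\ast\dd\tilde\gamma=\dd\gamma=i(\rho^c)^\ast\omega=q^\ast(\text{pullback of }i\,\pi^\ast\omega\text{ to }Y)$. Injectivity of $q^\ast$ on forms then gives $\curv\tilde\gamma=\pi^\ast\omega$.

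\textbf{Fibrewise holonomy.} Restrict to a fibre $\Lag(T_xM)$. Because $\pi^\ast\omega$ restricted to a fibre of $\pi$ is zero, $\tilde\gamma$ is flat there. Choosing a point $p\in P^c_x$ identifies $P^c_x$ with $\Mp^c(\bV)$ and $\Lag(T_xM)$ with $\Lag(\bV)$. Under this identification $Y|_{\Lag(T_xM)}\cong\Mp^c(\bV)\times_{\Mp^c_{\bF}(\bV)}\U(1)$, and $\gamma|_{P^c_x}$ becomes $\tfrac12\eta^\ast\Theta_{\U(1)}$ by Lemma~\ref{lem:sumMC} together with condition (2), since $P^c_x$ is a single orbit.

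It remains to show that this restricted connection is the flat connection of Theorem~\ref{thm:threeways}. Restrict further to the subbundle $\Mp(\bV)\subset\Mp^c(\bV)$. On $\Mp(\bV)=\ker\eta$ the form $\tfrac12\eta^\ast\Theta_{\U(1)}$ vanishes, so $\Mp(\bV)\times_{\Mp_{\bF}(\bV)}\U(1)$ carries the connection with locally constant transition functions, namely the flat structure given by $\widetilde{\ori}=\hat\chi^c|_{\Mp_{\bF}}$ (Lemma~\ref{lem:chicrestricts}). Theorem~\ref{thm:threeways} then shows that the holonomy on a generator of $\pi_1(\Lag(T_xM))\cong\Z$ is a primitive fourth root of unity.
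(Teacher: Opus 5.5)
Your proposal is correct and follows essentially the same route as the paper. You descend $\gamma$ as a basic form along $\ker\hat\chi^c\cong\Sp_{\bF}(\bV)$ using $\ker\hat\chi^c\le\ker\eta^2$, check the vertical normalization, descend the curvature identity, and reduce the fibrewise holonomy to Theorem~\ref{thm:threeways}. Your two refinements make explicit what the paper's proof leaves implicit: you check the normalization on all of $\operatorname{Lie}(\Mp^c_{\bF}(\bV))$ via $2\,\dd_1\hat\chi^c=\dd_1\eta$ rather than only on the central $\u(1)$, and you identify the fibrewise connection with the flat structure of Theorem~\ref{thm:threeways} by noting that $\tfrac12\eta^\ast\Theta_{\U(1)}$ vanishes on $\Mp(\bV)=\ker\eta$.
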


\begin{proof}
A connection descends along the principal $\Sp_{\bF}(\bV)$-action of
$P^c\to Y\cong P^c/\Sp_{\bF}(\bV)$ when it is invariant and horizontal for that action. The
first condition of Definition~\ref{def:centralconnection} gives invariance, since $\U(1)$ is
central and $\Ad$ is trivial. The vertical fields of the action are the fundamental vector fields
$X_{\bxi}$ for $\bxi\in\mathfrak{sp}_{\bF}$, the Lie algebra of $\Sp_{\bF}(\bV)$. Since
$\Sp_{\bF}(\bV)\le\ker\eta^2$ by Lemma~\ref{lem:lifts}(\ref{it:liftmpc}), $\eta$ takes values in
$\mu_2$ on $\Sp_{\bF}(\bV)$ and $\dd_1\eta$ vanishes on $\mathfrak{sp}_{\bF}$. By the second
condition of Definition~\ref{def:centralconnection},
$\gamma(X_{\bxi})=\tfrac12\dd_1\eta(\bxi)=0$. So $\gamma$ descends to a $\u(1)$-valued $1$-form
$\tilde\gamma$ on $Y$. This form is a principal connection for $Y\to\Lag(TM)$ because it is
invariant under the central circle action and $\tfrac12\dd_1\eta|_{\u(1)}=\id$ by
\eqref{eq:mudef}. The identity $\dd\gamma=i(\rho^c)^\ast\omega$ descends to
$\dd\tilde\gamma=i\,\pi^\ast\omega$. On a fibre $\Lag(T_xM)$ of $\pi$ this curvature vanishes,
and $Y$ restricts there to $\Mp^c(\bV)\times_{\hat\chi^c}\U(1)$, so the fibrewise holonomy is
that of Theorem~\ref{thm:threeways}.
\end{proof}

The following circle bundles over $\Lag(TM)$ are defined:
\[ Y\;=\;Y_{\hat\chi^c}\;:=\;P^c\times_{\hat\chi^c}\U(1),
   \qquad Y_{\ori}\;:=\;P\times_{\ori}\U(1),
   \qquad \pi^\ast Y_\eta\;\cong\;P^c\times_{\eta|_{\Mp^c_{\bF}(\bV)}}\U(1), \]
where $Y_\eta:=P^c\times_\eta\U(1)$ is the circle bundle over $M$ associated to the character
$\eta$. Since $\eta$ squares the central circle, $Y_\eta$ has a principal connection modelled by
$\dd_1\eta$, of curvature form $2\omega$. The bundle $Y_{\ori}\to\Lag(TM)$ has transition functions
valued in $\mu_2$, so it has a flat connection.

If $M$ admits a metaplectic structure $\tilde P\to P\to M$, then by
Theorem~\ref{thm:threeways} the bundles
\[ Y_{\hat\chi_{\mathrm{BKS}}\circ\widetilde{\ind}}
   \;=\;Y_{\hat\chi}\;=\;Y_{\widetilde{\ori}} \]
are defined as associated bundles of the principal $\Mp_{\bF}(\bV)$-bundle
$\tilde P\to\Lag(TM)$. The bundle $Y_{\widetilde{\ori}}\to\Lag(TM)$ has transition functions valued in
$\mu_4$, so it has a flat connection.

The tensor product relations of line bundles on $M$
(cf.\ \cite[p.~58]{CzyzMaslov}, \cite[V\S6, p.~273]{GuilleminSternberg},
\cite[Def.~1.18, Prop.~1.19]{Hess}, \cite[\S6, \S8]{RobinsonRawnsley}) hold on $\Lag(TM)$.
Here, for principal circle bundles with connection $(Y_1,\gamma_1)$ and $(Y_2,\gamma_2)$ over
the same base $B$, the \textbf{Picard product} is
\[
  (Y_1,\gamma_1)\otimes(Y_2,\gamma_2)
  \;:=\;\big((Y_1\times_B Y_2)/\U(1),\ \gamma_1+\gamma_2\big),
\]
where $\U(1)$ acts by $(y_1,y_2)\cdot z=(y_1z,\,y_2z^{-1})$ and $\gamma_1+\gamma_2$ denotes the
descent of $\mathrm{pr}_1^\ast\gamma_1+\mathrm{pr}_2^\ast\gamma_2$. It corresponds to the tensor
product of the associated hermitian line bundles; curvatures add and holonomies multiply. For a
principal $H$-bundle $Q$ with connection and characters $\chi_1,\chi_2:H\to\U(1)$, with the
induced connections on the associated bundles,
\begin{equation}\label{eq:picardchar}
  Q\times_{\chi_1\chi_2}\U(1)\;\cong\;\big(Q\times_{\chi_1}\U(1)\big)\otimes
  \big(Q\times_{\chi_2}\U(1)\big),
  \qquad [q,z_1z_2]\longmapsto\big[[q,z_1],[q,z_2]\big].
\end{equation}

\begin{proposition}\label{prop:tensorfactor}
Let $(P^c,\bu,\gamma)$ be a metaplectic-c prequantization over a symplectic manifold $M$. As
circle bundles with connection over $\Lag(TM)$,
\begin{equation}\label{eq:tensorfactor}
  Y^{\otimes2}\;\cong\;\pi^\ast Y_\eta\;\otimes\;Y_{\ori}.
\end{equation}
If $w_2(TM)=0$, then $(P^c,\gamma)$ splits in the sense of
Corollary~\ref{cor:splittings}, as
\begin{equation}\label{eq:mpreduction}
  (P^c,\gamma)\;\cong\;\tilde P\times_{\mu_2}(Y_{\mathrm{KS}},\gamma_{\mathrm{KS}})
\end{equation}
for a metaplectic structure $\tilde P$ of $M$ and a Kostant--Souriau prequantization
$(Y_{\mathrm{KS}},\gamma_{\mathrm{KS}})$ of $(M,\omega)$. Moreover
\begin{equation}\label{eq:tensorfactorks}
  Y_\eta\;\cong\;Y_{\mathrm{KS}}^{\otimes2},
  \qquad
  Y\;\cong\;\pi^\ast Y_{\mathrm{KS}}\;\otimes\;Y_{\widetilde{\ori}}
\end{equation}
as circle bundles with connection.
\end{proposition}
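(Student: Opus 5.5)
The plan is to prove every identity at the level of characters of the structure group $\Mp^c_{\bF}(\bV)$ of the principal bundle $P^c\to\Lag(TM)$. The product rule \eqref{eq:picardchar} then turns character identities into Picard product identities, and a separate check matches the connections.

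For \eqref{eq:tensorfactor}, first observe that $Y^{\otimes2}\cong P^c\times_{(\hat\chi^c)^2}\U(1)$ by \eqref{eq:picardchar} applied with $\chi_1=\chi_2=\hat\chi^c$. By \eqref{eq:chisplit} we have $(\hat\chi^c)^2[\tilde g,\xi]=\xi^2\,\ori(\ssigma(\tilde g))=\eta\cdot(\ori\circ\ssigma^c)$. Applying \eqref{eq:picardchar} again gives
\[
P^c\times_{(\hat\chi^c)^2}\U(1)\cong\big(P^c\times_{\eta|}\U(1)\big)\otimes\big(P^c\times_{\ori\circ\ssigma^c}\U(1)\big).
\]
The first factor is $\pi^\ast Y_\eta$. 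For the second, $\ori\circ\ssigma^c$ kills the central circle, so this bundle is $P\times_{\ori}\U(1)=Y_{\ori}$, with $P\to\Lag(TM)$ the principal $\Sp_{\bF}(\bV)$-bundle.

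The main obstacle is matching the connections, not the bundles. On $Y$ the connection $\tilde\gamma$ descends from $\gamma$, so $Y^{\otimes2}$ carries the descent of $2\gamma$. On $\pi^\ast Y_\eta$ the connection is modelled by $\dd_1\eta=2\cdot\tfrac12\dd_1\eta$, and so is also induced by $2\gamma$. On $Y_{\ori}$ the connection is the flat one coming from locally constant $\mu_2$ transition functions. To compare them, I would work in local sections $s_j$ of $P^c$ over $\Lag(TM)$ with transition functions $g_{jk}\in\Mp^c_{\bF}(\bV)$. The local forms are then $2\gamma_j$ on one side and $2\gamma_j+0$ on the other. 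The transition functions agree by the character identity above, and the transformation law \eqref{eq:centralconnectionlocal} shows that $\tfrac12\dd\log\eta$ is compatible with $\dd\log$ of the transition functions of each side. This identifies the descent data, and so gives an isomorphism in $\mathbf P(\U(1);\u(1))$ over $\Lag(TM)$.

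For the second part, first note that $w_2(TM)=w(P,\ssigma)=0$. Corollary~\ref{cor:splittings} then gives a splitting $(P^c,\gamma)\cong\tilde P\times_{\mu_2}(Y_{\mathrm{KS}},\gamma_{\mathrm{KS}})$ into a principal $\Mp(\bV)$-bundle $\tilde P$, which is a metaplectic structure, and a circle bundle with connection. The curvature of $\gamma_{\mathrm{KS}}$ equals $\curv\gamma=\omega$, so $Y_{\mathrm{KS}}$ is a Kostant--Souriau prequantization.

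Next, a character of $\Mp^c(\bV)=(\Mp(\bV)\times\U(1))/\Delta$ pulls back to $\Mp(\bV)\times\U(1)$. Writing $P^c=(\tilde P\times_M Y_{\mathrm{KS}})/\Delta$, the associated bundle along $[\tilde g,\xi]\mapsto\phi(\tilde g)\xi^k$ is therefore $(\tilde P\times_\phi\U(1))\otimes Y_{\mathrm{KS}}^{\otimes k}$, using the same Picard argument over the product.

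For $\eta$ this character is $\xi^2$ with $\phi$ trivial, which gives $Y_\eta\cong Y_{\mathrm{KS}}^{\otimes2}$. For $\hat\chi^c$ over $\tilde P\times_M\pi^\ast Y_{\mathrm{KS}}\to\Lag(TM)$, with structure group $\Mp_{\bF}(\bV)\times\U(1)$, the character is $\hat\chi(\tilde g)\xi$. Lemma~\ref{lem:chicrestricts} gives $\hat\chi=\widetilde{\ori}$, so the associated bundle is $\pi^\ast Y_{\mathrm{KS}}\otimes Y_{\widetilde{\ori}}$.

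The connections again agree by the local descent-data comparison. On the $\tilde P$ factor the connection forms are zero, since the $\mu_4$-valued transition functions are flat. On the $Y_{\mathrm{KS}}$ factor they are the $\gamma_j$, and these coincide with the local forms of $\gamma$ because the central product does not change the $\u(1)$-valued local forms, as in the proof of Proposition~\ref{prop:splittings}(3).
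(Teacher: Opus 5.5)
Your proposal is correct and follows essentially the same route as the paper. Both arguments factor the characters, $(\hat\chi^c)^2=\eta\cdot(\ori\circ\ssigma^c)$ and $\hat\chi^c[\tilde g,\xi]=\hat\chi(\tilde g)\,\xi$, turn these factorizations into Picard products via \eqref{eq:picardchar}, and take the splitting and $\curv\gamma_{\mathrm{KS}}=\omega$ from Corollary~\ref{cor:splittings}. Your two additions, the explicit descent-data check of the connections and working over the $\Mp_{\bF}(\bV)\times\U(1)$-bundle $\tilde P\times_M Y_{\mathrm{KS}}$ (where $\hat\chi$ and $\xi$ are genuinely separate characters), make precise what the paper's short proof leaves implicit.
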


\begin{proof}
Taking the associated bundles with respect to a product of characters gives the Picard product
of the associated bundles with connections by \eqref{eq:picardchar}, so in each case it is enough
to factor the character.

For \eqref{eq:tensorfactor}, $(\hat\chi^c)^2=(\ori\circ\ssigma^c)\cdot\eta$ by
\eqref{eq:chisplit}, since $\eta([\tilde g,\xi])=\xi^2$. The factor $\pi^\ast Y_\eta$ is pulled
back from $M$ because $\eta$ is defined on all of $\Mp^c(\bV)$ and not merely on
$\Mp^c_{\bF}(\bV)$.

For \eqref{eq:mpreduction}, apply Corollary~\ref{cor:splittings} to $\ssigma^c$. Its criterion
is $w(P,\ssigma)=w_2(TM)=0$, and the splitting consists of a metaplectic structure $\tilde P$
and a principal circle bundle with connection $(Y_{\mathrm{KS}},\gamma_{\mathrm{KS}})$. The
central connection $\gamma$ is of weight one on the central circle by
Definition~\ref{def:centralconnection}, so $\curv\gamma_{\mathrm{KS}}=\curv\gamma=\omega$ and
$(Y_{\mathrm{KS}},\gamma_{\mathrm{KS}})$ is a Kostant--Souriau prequantization of $(M,\omega)$.

Given the splitting \eqref{eq:mpreduction}, an element of $\Mp^c_{\bF}(\bV)$ is $[\tilde g,\xi]$ with
$\tilde g\in\Mp_{\bF}(\bV)$. Taking the associated bundle with respect to $\eta$ then gives the square of the bundle
associated to $\xi$, which is $Y_{\mathrm{KS}}$, and by \eqref{eq:chicdef}
$\hat\chi^c([\tilde g,\xi])=\hat\chi(\tilde g)\,\xi$ is the product of the $\mu_4$-valued
$\hat\chi$ with the weight-one central character, whose associated bundles are $Y_{\widetilde{\ori}}$
and $\pi^\ast Y_{\mathrm{KS}}$. This gives \eqref{eq:tensorfactorks}.
\end{proof}

\subsection{Maslov indices and the EBK condition}
\label{subsec:ebkcondition}

Let $(M,\omega)$ be a symplectic manifold and let $P$ be the symplectic frame bundle of $TM$.
Recall that
\[
  \Lag(TM)=P\times_{\Sp(\bV)}\Lag(\bV)\cong P\times_{\Sp(\bV)}\big(\Sp(\bV)/\Sp_{\bF}(\bV)\big)
  \cong P/\Sp_{\bF}(\bV)
\]
is the Lagrangian Grassmannian bundle, with projection $\pi:\Lag(TM)\to M$, and let
\[
  \Lag(TM;\mu_2):=P\times_{\Sp(\bV)}\Lag(\bV;\mu_2)
  \cong P\times_{\Sp(\bV)}\big(\Sp(\bV)/\Sp^+_{\bF}(\bV)\big)\cong P/\Sp^+_{\bF}(\bV)
\]
be the oriented Lagrangian Grassmannian bundle. Then there is a tower
\[
  \Lag(TM;\mu_2)\longrightarrow\Lag(TM)\xrightarrow{\ \pi\ }M ,
\]
where $\Lag(TM;\mu_2)\to\Lag(TM)$ is a double covering that restricts to the connected double
covering $\Lag(T_xM;\mu_2)\to\Lag(T_xM)$ on each fibre. Let $\tau_{TM}$ be the tautological bundle
over $\Lag(TM)$.

The obstruction $w(P,\ssigma)\in H^2(M;\Z_2)$ to a lifting of $P$ with respect to
$\ssigma:\Mp(\bV)\to\Sp(\bV)$ identifies with $w_2(TM)$ (\S\ref{subsec:bockstein}). When
$w_2(TM)=0$, this obstruction vanishes, so a metaplectic structure $\tilde P\to P\to M$ (a
lifting of $P$ with respect to $\ssigma$) exists. Let
\[
  \Lag(TM;\mu_4):=\tilde P\times_{\Mp(\bV)}\Lag(\bV;\mu_4)
  \cong\tilde P\times_{\Mp(\bV)}\big(\Mp(\bV)/\Sp^+_{\bF}(\bV)\big)\cong\tilde P/\Sp^+_{\bF}(\bV),
\]
where $\Sp^+_{\bF}(\bV)\le\Mp(\bV)$ is the lift of Lemma~\ref{lem:lifts}(\ref{it:liftplus}).
Then there is a tower
\[
  \Lag(TM;\mu_4)\longrightarrow\Lag(TM;\mu_2)\longrightarrow\Lag(TM)\xrightarrow{\ \pi\ }M ,
\]
where $\Lag(TM;\mu_4)\to\Lag(TM)$ is the fourfold Maslov covering associated to $\tilde P$,
that restricts to the connected fourfold covering $\Lag(T_xM;\mu_4)\to\Lag(T_xM)$ on each fibre,
and factors through the connected double covering
$\Lag(T_xM;\mu_4)/\mu_2\cong\Lag(T_xM;\mu_2)$.

Let $\widetilde{\Sp}(\bV)\to\Sp(\bV)$ be the simply connected Lie group covering, with kernel
$\Z$, so that $\Mp(\bV)\cong\widetilde{\Sp}(\bV)/(2\Z)$. Then the inclusion
$\Sp^+_{\bF}(\bV)\hookrightarrow\Sp(\bV)$ also lifts to $\widetilde{\Sp}(\bV)$, as in
Lemma~\ref{lem:lifts}(\ref{it:liftplus}). A compatible almost complex structure $J$ on $TM$
reduces the structure group of $P$ to the unitary group, so the obstruction
$w\big(P,\widetilde{\Sp}(\bV)\to\Sp(\bV)\big)\in H^2(M;\Z)$ identifies with $c_1(TM,J)$. When
$c_1(TM)=0$, this obstruction vanishes, so there exists a lifting
$P_\infty\to P$ with respect to $\widetilde{\Sp}(\bV)\to\Sp(\bV)$, and
$P_\infty/(2\Z)\to P_\infty/\Z=P\to M$ is a metaplectic structure. Let
\[
  \Lag(TM;\Z):=P_\infty\times_{\widetilde{\Sp}(\bV)}\Lag(\bV;\Z)
  \cong P_\infty\times_{\widetilde{\Sp}(\bV)}\big(\widetilde{\Sp}(\bV)/\Sp^+_{\bF}(\bV)\big)
  \cong P_\infty/\Sp^+_{\bF}(\bV),
\]
where $\Lag(\bV;\Z)\cong\widetilde{\Sp}(\bV)/\Sp^+_{\bF}(\bV)$ is the universal covering of
$\Lag(\bV)$. Then there is a tower
\[
  \Lag(TM;\Z)\longrightarrow\Lag(TM;\mu_4)\longrightarrow\Lag(TM;\mu_2)
  \longrightarrow\Lag(TM)\xrightarrow{\ \pi\ }M
\]
such that $\Lag(TM;\mu_4)\cong\Lag(TM;\Z)/(4\Z)$ and
$\Lag(TM;\mu_2)\cong\Lag(TM;\Z)/(2\Z)$, since a generator of
$\ker\big(\widetilde{\Sp}(\bV)\to\Sp(\bV)\big)$ acts on $\Lag(\bV;\Z)$ as $2\in\Z$
\cite[Lem.~2.1]{Seidel}.

More generally, $\Lag(TM)$ admits an $N$-fold Maslov covering if and only if $2c_1(TM)$
vanishes in $H^2(M;\Z_N)$, and an $\infty$-fold Maslov covering if and only if $2c_1(TM)=0$ in
$H^2(M;\Z)$ \cite[Lem.~2.2]{Seidel}. The $\mu_2$ orbit space quotient of a fourfold Maslov
covering, and the quotient by $2\Z$ of an $\infty$-fold Maslov covering, is a twofold Maslov
covering that is not necessarily the orientation covering $\Lag(TM;\mu_2)$ \cite[\S2b, Ex.~2.7]{Seidel}. For
example, the Enriques surface $X$ of Example~\ref{ex:enriques} has $2$-torsion first Chern class
$c_1(X)$. So $N$-fold Maslov coverings exist for every $N$, but no fourfold Maslov covering has
$\mu_2$ orbit space quotient isomorphic to $\Lag(TX;\mu_2)$, because the existence of such
structure would imply $X$ has a metaplectic structure.

Let $\iota_L:L\hookrightarrow M$ be a Lagrangian submanifold. Its tautological Gauss section
\[
  s_L:L\longrightarrow\Lag(TM)|_L,
  \qquad x\longmapsto T_xL,
\]
satisfies $s_L^\ast\tau_{TM}=TL$ and
$\pi\circ s_L=\iota_L$.
We view $\Lag(TM;\mu_2)$ and $\Lag(TM;\mu_4)$ as principal $\mu_2$- and $\mu_4$-bundles with
$\mu_k\le\U(1)$, and $\Lag(TM;\Z)$ as a principal $\Z$-bundle together with the character
$\rho_4:\Z\to\U(1)$, $k\mapsto i^k$. The associated bundles
\[
  \Lag(TM;\mu_k)\times_{\mu_k}\U(1),
  \qquad
  \Lag(TM;\Z)\times_{\rho_4}\U(1)
\]
have the same locally constant transition functions, so their parallel sections are well
defined, and they are circle bundles with flat connection. For $\lambda\in H_1(L;\Z)$, the
orientation monodromy is
\begin{equation}\label{eq:w1def}
  \big\langle s_L^\ast\Lag(TM;\mu_2),\lambda\big\rangle
  =\Hol_{s_L^\ast(\Lag(TM;\mu_2)\times_{\mu_2}\U(1))}(\lambda)
  =(-1)^{\langle w_1(TL),\lambda\rangle}
  \in\mu_2 .
\end{equation}
When $w_2(TM)=0$, the mod $4$ Maslov index $\Mas_4(\lambda)\in\Z_4$, for the choice of
metaplectic structure $\tilde P\to P\to M$, is defined by
\begin{equation}\label{eq:masdef}
  i^{\Mas_4(\lambda)}
  :=\big\langle s_L^\ast\Lag(TM;\mu_4),\lambda\big\rangle
  =\Hol_{s_L^\ast(\Lag(TM;\mu_4)\times_{\mu_4}\U(1))}(\lambda)
  \in\mu_4 .
\end{equation}
When $c_1(TM)=0$, the Maslov index, for the choice of lifting $P_\infty\to P$, is
\begin{equation}\label{eq:masint}
  \Mas(\lambda)
  :=\big\langle s_L^\ast\Lag(TM;\Z),\lambda\big\rangle
  \in\Z ,
  \qquad
  i^{\Mas(\lambda)}
  =\Hol_{s_L^\ast(\Lag(TM;\Z)\times_{\rho_4}\U(1))}(\lambda) .
\end{equation}
Compatibility of the tower gives $\Mas_4(\lambda)\equiv\langle w_1(TL),\lambda\rangle\pmod2$
and $\Mas(\lambda)\equiv\Mas_4(\lambda)\pmod4$.

\begin{definition}\label{def:quantizable}
Let $(P^c,\bu,\gamma)$ be a metaplectic-c prequantization of $(M,\omega)$, let $\hat\chi^c$ be a
normalized Hess character, and let $(Y,\tilde\gamma)\to\Lag(TM)$ be the descent by $\hat\chi^c$.
A Lagrangian submanifold $\iota_L:L\hookrightarrow M$ is \textbf{quantizable with respect to
$(P^c,\gamma,\hat\chi^c)$} if $s_L^\ast(Y,\tilde\gamma)$ admits a global parallel section,
equivalently if its holonomy character $\Hol_{s_L^\ast Y}:H_1(L;\Z)\to\U(1)$ is trivial.
\end{definition}

\begin{theorem}\label{thm:ebk}
Let $(P^c,\bu,\gamma)$ be a metaplectic-c prequantization of $(M,\omega)$ and let
$(Y,\tilde\gamma)\to\Lag(TM)$ be its descent by a normalized Hess character $\hat\chi^c$. Let
$\iota_L:L\hookrightarrow M$ be a Lagrangian submanifold and let $\lambda\in H_1(L;\Z)$.
\begin{enumerate}[label=\textup{(\arabic*)}]
\item The holonomy of $s_L^\ast(Y,\tilde\gamma)$ on $\lambda$ satisfies
\begin{equation}\label{eq:ebksquare}
  \Hol_{s_L^\ast Y}(\lambda)^2
   \;=\;\Hol_{\iota_L^\ast Y_\eta}(\lambda)
        \cdot\Hol_{s_L^\ast\Lag(TM;\mu_2)}(\lambda)
   \;=\;\exp\Big(2\pi i\oint_\lambda\tfrac{i\,s_\lambda^\ast\gamma_\eta}{2\pi}\Big)
        \cdot(-1)^{\langle w_1(TL),\lambda\rangle} .
\end{equation}
If $(Y',\tilde\gamma')$ is the descent by the other normalized Hess character
$\hat\chi^c\circ\alpha$, then
\[
  \Hol_{s_L^\ast Y'}(\lambda)
   \;=\;(-1)^{\langle w_1(TL),\lambda\rangle}\cdot\Hol_{s_L^\ast Y}(\lambda) .
\]

\item If $w_2(TM)=0$, let
$\big(\tilde P,\,(Y_{\mathrm{KS}},\gamma_{\mathrm{KS}}),\,\hv u\big)$ be a
$\,\cdot\,/\Delta$-lifting of $(P^c,\gamma)$, with
$\hv u:(P^c,\gamma)\xrightarrow{\ \cong\ }\tilde P\times_{\mu_2}(Y_{\mathrm{KS}},\gamma_{\mathrm{KS}})$, and let $\Lag(TM;\mu_4)$ be the fourfold Maslov covering
associated to $\tilde P$. Then for every Lagrangian submanifold, the holonomy of
$s_L^\ast(Y,\tilde\gamma)$ splits into well-defined factors,
\begin{equation}\label{eq:ebkclass}
  \Hol_{s_L^\ast Y}(\lambda)
   \;=\;\Hol_{\iota_L^\ast Y_{\mathrm{KS}}}(\lambda)
        \cdot\Hol_{s_L^\ast\Lag(TM;\mu_4)}(\lambda)
   \;=\;\exp\Big(2\pi i\oint_\lambda\tfrac{i\,s_\lambda^\ast\gamma_{\mathrm{KS}}}{2\pi}\Big)
        \cdot i^{\Mas_4(\lambda)} .
\end{equation}

\item If $c_1(TM)=0$, let $P_\infty\to P$ be a lifting with structure group
$\widetilde{\Sp}(\bV)$ such that
$\big(P_\infty/(2\Z),\,(Y_{\mathrm{KS}},\gamma_{\mathrm{KS}}),\,\hv u\big)$ is a
$\,\cdot\,/\Delta$-lifting of $(P^c,\gamma)$, with
$\hv u:(P^c,\gamma)\xrightarrow{\ \cong\ }\big(P_\infty/(2\Z)\big)\times_{\mu_2}
(Y_{\mathrm{KS}},\gamma_{\mathrm{KS}})$. Then $\Mas_4(\lambda)=\Mas(\lambda)\bmod 4$, and
\begin{equation}\label{eq:ebkint}
  \Hol_{s_L^\ast Y}(\lambda)
   \;=\;\exp\Big(2\pi i\Big[\oint_\lambda\tfrac{i\,s_\lambda^\ast\gamma_{\mathrm{KS}}}{2\pi}
     +\tfrac{\Mas(\lambda)}{4}\Big]\Big) .
\end{equation}

\item The splitting obstructions restrict to $L$ as
\begin{equation}\label{eq:obstructionL}
  \iota_L^\ast w_2(TM)=w_1(TL)^2\ \text{ in } H^2(L;\Z_2),
  \qquad
  \iota_L^\ast c_1(TM)=\beta_2\big(w_1(TL)\big)\ \text{ in } H^2(L;\Z).
\end{equation}
Consequently, if $w_1(TL)^2=0$, then $\iota_L^\ast(P^c,\gamma)$ admits a
$\,\cdot\,/\Delta$-lifting, and the holonomy of $s_L^\ast(Y,\tilde\gamma)$ splits as in
\eqref{eq:ebkclass} over $L$. If $\beta_2\big(w_1(TL)\big)=0$, for instance if $L$ is
orientable, there exist a $\,\cdot\,/\Delta$-lifting
$\big(\tilde P,(Y_{\mathrm{KS}},\gamma_{\mathrm{KS}}),\hv u\big)$ of $\iota_L^\ast(P^c,\gamma)$ and
a lifting of its mod $4$ Maslov index to an integer-valued Maslov index $\Mas:H_1(L;\Z)\to\Z$
such that
\[ \Hol_{s_L^\ast Y}(\lambda)
   \;=\;\exp\Big(2\pi i\Big[\oint_\lambda\tfrac{i\,s_\lambda^\ast\gamma_{\mathrm{KS}}}{2\pi}
     +\tfrac{\Mas(\lambda)}{4}\Big]\Big) . \]
\end{enumerate}
\end{theorem}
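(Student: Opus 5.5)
The plan is to derive every item from the tensor factorizations of Proposition~\ref{prop:tensorfactor}, pulled back along the Gauss section $s_L$, using that pullback commutes with Picard products and that holonomies of Picard products multiply.

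For (1), pull back \eqref{eq:tensorfactor} along $s_L$. Since $\pi\circ s_L=\iota_L$, we get
\[
s_L^\ast Y^{\otimes2}\cong\iota_L^\ast Y_\eta\otimes s_L^\ast Y_{\ori},
\]
and taking holonomy on $\lambda$ gives the first equality of \eqref{eq:ebksquare}. The bundle $Y_{\ori}=P\times_{\ori}\U(1)$ is the bundle associated to the double covering $P/\Sp^+_{\bF}(\bV)=\Lag(TM;\mu_2)\to\Lag(TM)$. Its pullback holonomy is therefore \eqref{eq:w1def}. The $Y_\eta$ factor is computed by a trivializing section along $\lambda$ on the pulled-back circle bundle with connection $(Y_\eta,\gamma_\eta)$.

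For the comparison of the two Hess characters, note that $\hat\chi^c\circ\alpha=\hat\chi^c\cdot(\ori\circ\ssigma^c)$ by \eqref{eq:gsflip} and \eqref{eq:chicdef}. Hence \eqref{eq:picardchar} gives $Y'\cong Y\otimes Y_{\ori}$ with its connection, and pulling back yields the factor $(-1)^{\langle w_1(TL),\lambda\rangle}$. Two points need care here. First, I must check that the connection on $Y'$ descended from $\gamma$ agrees with the Picard product connection. This holds because $Y_{\ori}$ is flat and both connections are descents of the same $\gamma$ on $P^c$. Second, $Y_{\ori}$ is formed from $P$ and must be viewed over $P^c$ via $\bu$.

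For (2), transport along $\hv u$ so that $P^c=\tilde P\times_{\mu_2}Y_{\mathrm{KS}}$. Then \eqref{eq:tensorfactorks} gives $Y\cong\pi^\ast Y_{\mathrm{KS}}\otimes Y_{\widetilde{\ori}}$. Here $Y_{\widetilde{\ori}}=\tilde P\times_{\widetilde{\ori}}\U(1)$, and since $\ker\widetilde{\ori}$ is the lift of $\Sp^+_{\bF}(\bV)$, this equals $\Lag(TM;\mu_4)\times_{\mu_4}\U(1)$. Pulling back by $s_L$ and using \eqref{eq:masdef} gives \eqref{eq:ebkclass}.

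The main obstacle is well-definedness. The identification $\pi_0(\Mp_{\bF}(\bV))\cong\mu_4$ used to define $\Mas_4$ must be the one coming from the chosen $\hat\chi^c$ via Lemma~\ref{lem:chicrestricts}. I will fix $\widetilde{\ori}:=\hat\chi^c|_{\Mp_{\bF}(\bV)}$ throughout, so that \eqref{eq:chisqrtor} is forced. A second point is that the splitting is unique only up to $H^1(M;\Z_2)$ by Corollary~\ref{cor:splittings}. Tensoring by $T$ changes $Y_{\mathrm{KS}}$ by $T$ and $\Lag(TM;\mu_4)$ by $T$ as well, and the signs cancel in the product. This is why the factors are well defined only as a pair, and I will check this cancellation explicitly.

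For (3), put $\tilde P=P_\infty/(2\Z)$. Then $\Lag(TM;\mu_4)=\Lag(TM;\Z)/(4\Z)$ by the tower above, and $\rho_4$ factors through $\Z/4\cong\mu_4$. So the $\mu_4$-bundle and the bundle associated along $\rho_4$ coincide, which gives $i^{\Mas_4}=i^{\Mas}$. Substituting into (2) yields \eqref{eq:ebkint}.

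For (4), use the splitting $\iota_L^\ast TM\cong TL\oplus T^\ast L\cong TL\otimes\C$, obtained from a compatible $J$. The Whitney formula gives $w_2(TL\oplus T^\ast L)=w_1(TL)^2+2w_2(TL)=w_1(TL)^2$. For the Chern class, $c_1(TL\otimes\C)$ is $2$-torsion and equals $\beta_2(w_1(TL))$, by the standard fact that $c_1(\ell\otimes\C)=\beta_2 w_1(\ell)$ for real line bundles combined with additivity via $\det$. The consequences then follow by applying (2) and (3) over $L$ to $\iota_L^\ast(P^c,\gamma)$. Here Remark~\ref{rem:naturality} ensures that descent and pullback commute, and the Gauss section of $L$ lies in $\Lag(\iota_L^\ast TM)$.
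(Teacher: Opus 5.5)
Your treatment of (1)--(3) and of the characteristic class identities in (4) is the paper's argument. You factor the relevant character ($(\hat\chi^c)^2=\eta\cdot(\ori\circ\ssigma^c)$, $\hat\chi^c\circ\alpha=\hat\chi^c\cdot(\ori\circ\ssigma^c)$, $\hat\chi^c[\tilde g,\xi]=\xi\,\widetilde{\ori}(\tilde g)$), pass to associated bundles via \eqref{eq:picardchar}, and pull back along $s_L$ using $\pi\circ s_L=\iota_L$. Your remarks on fixing $\widetilde{\ori}:=\hat\chi^c|_{\Mp_{\bF}(\bV)}$ and on the $H^1(M;\Z_2)$-twist cancelling in the product are correct. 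With the splitting fixed, as in the hypothesis of (2), each factor is individually well defined.

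The gap is in the final consequence of (4), where you say the result follows by applying (3) over $L$. The hypothesis of (3) is not just $c_1=0$: it requires a $\,\cdot\,/\Delta$-lifting whose metaplectic component \emph{is} $P_\infty/(2\Z)$. Over $L$, the vanishing of $\iota_L^\ast c_1(TM)=\beta_2(w_1(TL))$ gives some $P_\infty\to P|_L$. Then $w_1(TL)^2=0$, being the mod $2$ reduction of $\beta_2(w_1(TL))$, gives some splitting of $\iota_L^\ast(P^c,\gamma)$ by Corollary~\ref{cor:splittings}. But these two objects are chosen independently. The metaplectic component of the splitting may differ from $P_\infty/(2\Z)$ by a class in $H^1(L;\Z_2)$. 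Re-choosing $P_\infty$ moves $P_\infty/(2\Z)$ only by the image of the reduction $H^1(L;\Z)\to H^1(L;\Z_2)$, which need not be surjective.

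This is not a formality. For $L=\mathbb{RP}^3\subset T^\ast\mathbb{RP}^3$ one has $H^1(L;\Z)=0$ and $H^1(L;\Z_2)\cong\Z_2$, so $P_\infty$ is unique, and exactly one of the two splittings is compatible with it. For the compatible one, $\Mas_4$ vanishes on the generator of $H_1(L;\Z)\cong\Z_2$. Your own twisting computation then shows that for the other splitting $\Mas_4$ takes the value $2$ there, so it admits no lift to a homomorphism $H_1(L;\Z)\to\Z$.

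The missing step is the paper's torsor argument. The splittings of $\iota_L^\ast(P^c,\gamma)$ form an $H^1(L;\Z_2)$-torsor (Proposition~\ref{prop:splittings}(3)), and so do the metaplectic structures on $P|_L$. Taking the first component is equivariant --- precisely your observation in (2) that twisting the splitting by $T$ twists $\tilde P$ by $T$ --- hence bijective. So some splitting has $\tilde P=P_\infty/(2\Z)$, and only for that splitting does (3) apply over $L$.
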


\begin{proof}
All connections below are flat after pullback to $L$, since
$\iota_L^\ast\omega=0$. Their holonomies therefore define characters of
$H_1(L;\Z)$.

\textup{(1)}
Let $Y_{\ori}=P\times_{\ori}\U(1)$ be the flat circle bundle of
\S\ref{subsec:ebklag}, associated to the orientation covering
$\Lag(TM;\mu_2)$ of the tautological bundle $\tau_{TM}$ over $\Lag(TM)$.
By \eqref{eq:chisplit},
\[
  (\hat\chi^c)^2
  =\eta\cdot(\ori\circ\ssigma^c).
\]
Taking the associated bundles with respect to this identity of characters gives
\[
  Y^{\otimes2}\cong\pi^\ast Y_\eta\otimes Y_{\ori}
\]
as circle bundles with connection, as in
Proposition~\ref{prop:tensorfactor}. The connection on $Y_\eta$ pulls
back to $2\gamma$ on $P^c$, and the orientation character is locally
constant and equips $Y_{\ori}$ with a flat connection.

Pulling back by $s_L$, we obtain
\[
  \Hol_{s_L^\ast Y}(\lambda)^2
  =
  \Hol_{\iota_L^\ast Y_\eta}(\lambda)\,
  (-1)^{\langle w_1(TL),\lambda\rangle}
\]
because $\pi\circ s_L=\iota_L$ and $s_L^\ast\tau_{TM}=TL$.
Then for a trivializing section $s_\lambda$ of $\lambda^\ast\iota_L^\ast Y_\eta$,
\[
  \Hol_{\iota_L^\ast Y_\eta}(\lambda)
  =\exp\Big(2\pi i\oint_\lambda\tfrac{i\,s_\lambda^\ast\gamma_\eta}{2\pi}\Big),
\]
where the quantity does not depend on the choice of trivializing section.

Moreover, \eqref{eq:gsflip} gives
\[
  \hat\chi^c\circ\alpha
  =\hat\chi^c\cdot(\ori\circ\ssigma^c).
\]
Thus $(Y',\tilde\gamma')\cong(Y,\tilde\gamma)\otimes Y_{\ori}$, with the tensor product
connection. So the holonomies on $\lambda$ differ by $(-1)^{\langle w_1(TL),\lambda\rangle}$.

\textup{(2)}
Through $\hv u$, an element of $\Mp^c_{\bF}(\bV)$ is written $[\tilde g,\xi]$ with
$\tilde g\in\Mp_{\bF}(\bV)$, and by Lemma~\ref{lem:chicrestricts},
\[
  \hat\chi^c([\tilde g,\xi])=\xi\,\hat\chi(\tilde g),
  \qquad
  \hat\chi=\widetilde{\ori}.
\]
Taking the associated bundles with respect to this product of characters gives
\[
  (Y,\tilde\gamma)
  \cong
  \pi^\ast(Y_{\mathrm{KS}},\gamma_{\mathrm{KS}})
  \otimes\big(\Lag(TM;\mu_4)\times_{\mu_4}\U(1)\big)
\]
as circle bundles with connection, since $\gamma$ is induced from $\gamma_{\mathrm{KS}}$ and
$\widetilde{\ori}$ is locally constant. Pulling back by $s_L$, we obtain
\[
  \Hol_{s_L^\ast Y}(\lambda)
  =\Hol_{\iota_L^\ast Y_{\mathrm{KS}}}(\lambda)\cdot i^{\Mas_4(\lambda)}
\]
because $\pi\circ s_L=\iota_L$, and by \eqref{eq:masdef}. Then for a trivializing section
$s_\lambda$ of $\lambda^\ast\iota_L^\ast Y_{\mathrm{KS}}$,
\[
  \Hol_{\iota_L^\ast Y_{\mathrm{KS}}}(\lambda)
  =\exp\Big(2\pi i\oint_\lambda\tfrac{i\,s_\lambda^\ast\gamma_{\mathrm{KS}}}{2\pi}\Big),
\]
where the quantity does not depend on the choice of trivializing section.

\textup{(3)}
By \S\ref{subsec:ebkcondition}, $P_\infty/(2\Z)$ is a metaplectic structure, and
$\Lag(TM;\mu_4)\cong\Lag(TM;\Z)/(4\Z)$ is the fourfold Maslov covering associated to it, so
$\Mas_4(\lambda)=\Mas(\lambda)\bmod4$ and, by \eqref{eq:masdef} and \eqref{eq:masint},
$i^{\Mas_4(\lambda)}=i^{\Mas(\lambda)}$. Part \textup{(2)}, applied to the given
$\,\cdot\,/\Delta$-lifting, yields
\[
  \Hol_{s_L^\ast Y}(\lambda)
  =\Hol_{\iota_L^\ast Y_{\mathrm{KS}}}(\lambda)\cdot i^{\Mas(\lambda)}
  =\exp\Big(2\pi i\Big[\oint_\lambda\frac{i\,s_\lambda^\ast\gamma_{\mathrm{KS}}}{2\pi}
    +\frac{\Mas(\lambda)}4\Big]\Big).
\]

\textup{(4)} Since $L$ is Lagrangian, $TL\cap J\,TL=0$ for a compatible complex structure $J$,
so $TM|_L\cong TL\oplus J\,TL\cong TL\otimes\C$. The Whitney sum formula gives
$\iota_L^\ast w_2(TM)=w_2(TL\oplus TL)=w_1(TL)^2$, and
$\iota_L^\ast c_1(TM)=c_1(TL\otimes\C)=\beta_2\big(w_1(TL)\big)$, by the splitting principle from
the case of a real line bundle. This gives \eqref{eq:obstructionL}.

The obstruction to a $\,\cdot\,/\Delta$-lifting of $\iota_L^\ast(P^c,\gamma)$ is
$w(P|_L,\ssigma)=\iota_L^\ast w_2(TM)$ by naturality (Remark~\ref{rem:naturality}) and
Proposition~\ref{prop:splittings}\textup{(3)}. If $w_1(TL)^2=0$, a $\,\cdot\,/\Delta$-lifting
therefore exists, and the proof of \textup{(2)}, applied over $L$ with the fourfold Maslov
covering of $\Lag(TM)|_L$ associated to it, gives the splitting \eqref{eq:ebkclass}.

If $\beta_2\big(w_1(TL)\big)=0$, then also $w_1(TL)^2=0$, since $TM|_L$ is complex, so that
$w_1(TL)^2=\iota_L^\ast w_2(TM)$ is the mod $2$ reduction of
$\iota_L^\ast c_1(TM)=\beta_2\big(w_1(TL)\big)$. By \S\ref{subsec:ebkcondition}, the vanishing
of $\iota_L^\ast c_1(TM)$ gives a lifting $P_\infty\to P|_L$ with structure group
$\widetilde{\Sp}(\bV)$, so that $P_\infty/(2\Z)$ is a metaplectic structure on $P|_L$. The
$\,\cdot\,/\Delta$-liftings of $\iota_L^\ast(P^c,\gamma)$ form a torsor under $H^1(L;\Z_2)$
(Proposition~\ref{prop:splittings}\textup{(3)}), as do the metaplectic structures on $P|_L$
(\S\ref{subsec:liftings}), and taking the first component is equivariant. An equivariant map
between torsors under the same group is bijective, so there is a $\,\cdot\,/\Delta$-lifting
with $\tilde P=P_\infty/(2\Z)$. The proof of \textup{(3)}, applied over $L$, then gives the
integer splitting.
\end{proof}

\begin{example}\label{ex:rp2}
On orientable Lagrangian submanifolds the condition of Definition~\ref{def:quantizable} does not
depend on the choice of Hess character, by Theorem~\ref{thm:ebk}\textup{(1)}. On nonorientable
ones it can. Let $M=\mathbb{CP}^2$ with $\omega=k\,\omega_{\mathrm{FS}}$, where
$[\omega_{\mathrm{FS}}/2\pi]$ is the hyperplane class $H$. Since $c_1(\mathbb{CP}^2)=3H$, the class
$[\omega/2\pi]+\tfrac12c_1(\mathbb{CP}^2)$ is integral for $k\in\tfrac12+\Z_{>0}$, so $M$ is
metaplectic-c prequantizable (Corollary~\ref{cor:Pc}). The real locus $L=\mathbb{RP}^2$ is
Lagrangian, with $H_1(L;\Z)\cong\Z_2$ generated by $\lambda$. The holonomy character factors
through $H_1(L;\Z)$, so $\Hol_{s_L^\ast Y}(\lambda)^2=1$, and since
$\langle w_1(T\mathbb{RP}^2),\lambda\rangle\neq0$, Theorem~\ref{thm:ebk}\textup{(1)} gives
$\Hol_{s_L^\ast Y'}(\lambda)=-\Hol_{s_L^\ast Y}(\lambda)$. Hence $\mathbb{RP}^2$ is quantizable
with respect to exactly one of the two normalized Hess characters.
\end{example}

\section*{Acknowledgments}
\addcontentsline{toc}{section}{Acknowledgments}

This research was partly funded by the United States--Israel Binational Science Foundation (BSF) and by the Natural Sciences and Engineering Research Council of Canada (NSERC), and by the National Research Foundation of Korea (NRF) grant funded by the Korea government (Ministry of Science and ICT, MSIT) (No.~RS-2024-00359647), and the Basic Science Research Program through the National Research Foundation of Korea (NRF) funded by the Ministry of Education (RS-2021-NR060138).

\end{document}